\newtheoremstyle{fact}
     {\topsep}
     {\topsep}
     {\slshape}
     {}
     {\bfseries}
     {}
     { }
     {\thmname{#1}\thmnumber{ #2.}\thmnote{ \rm (#3)}}
\newtheorem{theorem}{Theorem}[section]
\newtheorem{Ltheorem}{Theorem}
\newtheorem*{theorem*}{Theorem} 
\newtheorem{lemma}[theorem]{Lemma}
\newtheorem{proposition}[theorem]{Proposition}
\newtheorem{corollary}[theorem]{Corollary}
\theoremstyle{definition}
\newtheorem{definition}[theorem]{Definition}
\newtheorem{remark}[theorem]{Remark}
\newtheorem*{remark*}{Remark}
\newtheorem{notation}[theorem]{Notation}
\newtheorem*{question*}{Question}
\newtheorem*{examples*}{Examples}  
\newtheorem*{example*}{Example}
\newtheorem*{convention*}{Convention}
\theoremstyle{fact}
\newtheorem{ftheorem}[theorem]{Theorem}
\newtheorem{fproposition}[theorem]{Proposition}
\newenvironment{myromanlist}[1][enumi]{\begin{list}{{\rm (\roman{#1})}}
{\usecounter{#1}\setlength{\labelwidth}{25pt}\setlength{\topsep}{-6pt}
\setlength{\itemsep}{-4pt} \setlength{\leftmargin}{25pt}}}{\end{list}}
\newenvironment{myalphlist}[1][enumi]{\begin{list}{{\rm (\alph{#1})}}
{\usecounter{#1}\setlength{\labelwidth}{25pt}\setlength{\topsep}{-6pt}
\setlength{\itemsep}{-4pt} \setlength{\leftmargin}{25pt}}}{\end{list}}
\def\proofont{\fontseries{bx}\fontshape{sc}\selectfont}
\def\proofname{Proof.}
\newcommand{\Note}[1]{}
\renewenvironment{proof}[1][\proofname]{\par
  \normalfont
  \topsep6\p@\@plus6\p@ \trivlist
  \item[\hskip\labelsep\noindent\proofont #1]\ignorespaces
}{%
  \qed\endtrivlist
}
\titleformat*{\section}{\normalsize\bfseries\centering}
\titleformat*{\subsection}{\normalsize\bfseries}
\titlespacing{\subsection}{0pt}{\topsep}{0.5ex}
\titleformat{\subsection}[runin]{\normalfont\bfseries}{%
\thesubsection.}{0.5ex}{}[.]
\author{D. Dikranjan\thanks{The first author acknowledges 
the financial aid received from MCYT, MTM2006-02036 and FEDER funds.} 
{ }and G\'abor Luk\'acs\thanks{The second author gratefully acknowledges 
the generous financial  support received from NSERC and the University of 
Manitoba, which enabled him to do this research.}}
\title{Quasi-convex sequences\\ in the circle and the $3$-adic 
integers\thanks{{\em 2000 Mathematics Subject Classification}: 
Primary 22A05; Secondary  22B05.\endgraf \hspace{5.5pt} 
{\em Keywords:} quasi-convex sets, qc-dense sets, torus group, $3$-adic 
integers}}
\begin{document}

\makeatletter
\def\@fnsymbol#1{\ifcase#1\or * \or 1 \or 2  \else\@ctrerr\fi\relax}

\let\mytitle\@title
\chead{\small\itshape D. Dikranjan and G. Luk\'acs / 
Quasi-convex sequences in the circle and the $3$-adic integers}
\fancyhead[RO,LE]{\small \thepage}
\makeatother

\maketitle

\def\thanks#1{} 

\thispagestyle{empty}


\begin{abstract}
In this paper, we present families of quasi-convex sequences converging to 
zero in the circle group $\mathbb{T}$, and  the group $\mathbb{J}_3$ of 
$3$-adic integers. These sequences are determined by an increasing 
sequences of integers. For an increasing sequence 
$\underline{a}=\{a_n\}_{n=0}^\infty \subseteq \mathbb{Z}$,
put $g_n=a_{n+1}-a_n$. We prove that 

\begin{myalphlist}

\item
the set 
$\{0\}\cup\{\pm 3^{-(a_n+1)} \mid  n\in \mathbb{N}\} $ is quasi-convex in 
$\mathbb{T}$ if and only if $a_0>0$ and  $g_n>1$ for every $n\in \mathbb{N}$;

\item
the set $\{0\}\cup\{\pm 3^{a_n} \mid n\in \mathbb{N}\} $ is quasi-convex 
in the group $\mathbb{J}_3$ of $3$-adic integers if and only if 
$g_n>1$ for every $n\in \mathbb{N}$.
\vspace{3pt}

\end{myalphlist}
Moreover, we solve an open problem from \cite{DikLeo} by 
providing a 
complete characterization of the sequences $\underline{a}$ such that
\mbox{$\{0\}\cup\{\pm 2^{-(a_n+1)}\mid n\in \mathbb{N}\}$} is quasi-convex 
in~$\mathbb{T}$. Using this result, we also obtain a characterization 
of the sequences $\underline{a}$ such  that the set 
\mbox{$\{0\}\cup\{\pm 2^{-(a_n+1)}\mid n\in \mathbb{N}\}$} is quasi-convex 
in $\mathbb{R}$.
\end{abstract}

\section{Introduction}

\label{sect:intro}

One of the mains sources of inspiration for the theory of topological 
groups is the theory of topological vector spaces, where the notion of 
convexity plays a prominent role. In this context, the reals $\mathbb{R}$ 
are replaced with the circle group $\mathbb{T}=\mathbb{R}/\mathbb{Z}$, and 
linear functionals are replaced by {\em characters}, that is, continuous 
homomorphisms to $\mathbb{T}$. By making substantial use of characters, 
Vilenkin introduced  the notion of quasi-convexity for abelian topological 
groups as a  counterpart of convexity in topological vector spaces (cf.  
\cite{Vilenkin}).  The counterpart of locally convex spaces are the 
locally quasi-convex groups.  This class includes all locally compact 
abelian groups and locally convex topological vector spaces 
(cf.~\cite{Banasz}). Every locally quasi-convex group is {\em  
maximally  almost periodic} (briefly, {\em MAP}), which means that its 
characters separate its points.

Unlike the ``geometrically" transparent property of convexity,
quasi-convexity remains an admittedly mysterious property.  Although
locally quasi-convex groups have been studied by many authors
(cf.~\cite{Aussenhofer}, \cite{Banasz}, and \cite{BruPhD}), their work did
not completely reveal the nature of the small quasi-convex sets.
Aussenhofer proved that the quasi-convex hull of a finite subset of a MAP
group is finite (cf.~\cite{Aussenhofer}), 
and a slightly more general
property was shown in \cite{DikKun} (many examples of finite quasi-convex
sets are found in \cite{BeiLeoDikSte} and \cite{deLeoPhD}). Since finite
sets are compact, the following statement---also known as the {\em
quasi-convex compactness property}---established by Hern\'andez, and
independently, by Bruguera and Mart\'{\i}n-Peinador, can be considered
a generalization of Aussenhofer's theorem:  A~metrizable locally
quasi-convex abelian group $G$ is complete if and only if the quasi-convex 
hull of every compact subset of $G$ is compact (cf.~\cite{Hern2}
and~\cite{BrugMar2}). Luk\'acs extended this result, and proved that
a~metrizable abelian group $A$ is MAP and has the quasi-convex compactness 
property if and only if it is locally quasi-convex and complete;
he also showed that such groups are characterized by the property 
that the  evaluation map $\alpha_A\colon A \rightarrow \hat{\hat A}$ is a
closed embedding (cf.~\cite[I.34]{GLdualtheo}).

Interest in the compact quasi-convex sets stems from the theory of 
Mackey groups (cf.~\cite{ChaMarTar} and~\cite{deLeoPhD}). The paper 
\cite{DikLeo} is dedicated to the study of countably infinite quasi-convex 
sets (in fact, sequences that converge to zero) in some familiar locally 
compact abelian groups such as $\mathbb{R}$, and the compact groups 
$\mathbb{T}$ and $\mathbb{J}_2$ ($2$-adic integers). In order to formulate 
the main results of \cite{DikLeo}, we introduce some notations.

Let $\pi\colon \mathbb{R} \rightarrow \mathbb{T}$  denote  the canonical 
projection. Since the restriction 
$\pi_{|[0,1)}\colon [0,1) \rightarrow \mathbb{T}$ is
a bijection, we often identify in the sequel, {\em par abus de language},  
a number $a\in [0,1)$ with its image (coset) $\pi(a)=a+\mathbb{Z}\in 
\mathbb{T}$.  We put $\mathbb{T}_m:=\pi([-\frac{1}{4m},\frac{1}{4m}])$ 
for all $m\in \mathbb{N}\backslash\{0\}$. According to standard 
notation in this area, we use $\mathbb{T}_+$ to denote $\mathbb{T}_1$.
For an abelian topological group $G$, we denote by $\widehat{G}$ 
the {\em Pontryagin dual} of a $G$, that is, the group of all 
characters of $G$, endowed with the compact-open topology.

\begin{definition}\label{def:into:qc}
For $E\subseteq G$ and $A \subseteq \widehat{G}$,  the 
{\em polars} of $E$ and $A$ are defined as
\begin{align}
E^\triangleright=\{\chi\in \widehat{G} \mid \chi(E) \subseteq \mathbb{T}_+\}
\quad \text{and} \quad
A^\triangleleft=\{ x \in A \mid  \forall \chi \in A, 
\chi(x) \in \mathbb{T}_+ \}.
\end{align}
The set $E$ is said to be {\em quasi-convex} if 
$E=E^{\triangleright\triangleleft}$.
\end{definition}

Obviously, \mbox{$E\subseteq E^{\triangleright\triangleleft}$} holds for 
every  \mbox{$E\subseteq G$}. Thus, $E$ is quasi-convex
if and only if for every \mbox{$x\in G\backslash E$} there exists 
\mbox{$\chi\in E^\triangleright$} such that 
\mbox{$\chi(x)\not\in \mathbb{T}_+$}. 
The set  $Q_{G}(E):=E^{\triangleright\triangleleft}$ is the smallest 
quasi-convex set of $G$ that contains $E$, and it is 
called the {\em quasi-convex hull} of $E$. In what follows, we rely on
the following general property of quasi-convexity.

\begin{fproposition}[{\cite[I.3(e)]{GLdualtheo}, \cite[2.7]{DikLeo}}]
\label{prop:intro:qc-hom}
If $f\colon G\to H$ is a continuous homomorphism of abelian
topological groups and $E\subseteq G$, then $f(Q_G(E))\subseteq Q_H(f(E))$.
\end{fproposition}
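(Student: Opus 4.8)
The plan is to exploit the adjoint (dual) homomorphism and the way it interacts with polars. First I would introduce the dual map $\widehat{f}\colon \widehat{H}\to\widehat{G}$ defined by $\widehat{f}(\psi)=\psi\circ f$; here the continuity of $f$ is exactly what guarantees that $\psi\circ f$ is again a continuous character of $G$, so that this map is well-defined and lands in $\widehat{G}$. The single observation on which everything rests is the compatibility of the $\triangleright$-polar with $\widehat{f}$: for $\psi\in\widehat{H}$ one has $\psi\in f(E)^\triangleright$ if and only if $\psi(f(E))\subseteq\mathbb{T}_+$, which, since $\psi(f(E))=(\psi\circ f)(E)$, is equivalent to $\widehat{f}(\psi)\in E^\triangleright$. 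In other words, $f(E)^\triangleright=\widehat{f}^{-1}(E^\triangleright)$.

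With this in hand, the inclusion is a direct unwinding of the definition of the $\triangleleft$-polar. I would take an arbitrary $x\in Q_G(E)=E^{\triangleright\triangleleft}$ and aim to show $f(x)\in f(E)^{\triangleright\triangleleft}=Q_H(f(E))$, that is, that $\psi(f(x))\in\mathbb{T}_+$ for every $\psi\in f(E)^\triangleright$. Fixing such a $\psi$, the observation above gives $\psi\circ f=\widehat{f}(\psi)\in E^\triangleright$. Since $x\in E^{\triangleright\triangleleft}$ means precisely that $\chi(x)\in\mathbb{T}_+$ for every $\chi\in E^\triangleright$, applying this with $\chi=\psi\circ f$ yields $\psi(f(x))=(\psi\circ f)(x)\in\mathbb{T}_+$. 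As $\psi$ was arbitrary, $f(x)\in Q_H(f(E))$, and hence $f(Q_G(E))\subseteq Q_H(f(E))$.

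I do not expect any genuine obstacle here: the statement is essentially the functoriality of the polarity pairing, and the only point that truly requires $f$ to be a \emph{continuous homomorphism} rather than an arbitrary map is that $\psi\circ f$ must be a well-defined element of $\widehat{G}$. It is worth noting that equality generally fails, because $\widehat{f}$ need not be surjective and so not every $\chi\in E^\triangleright$ arises as some $\psi\circ f$; for this reason only the containment direction is asserted, and the argument makes no attempt to reverse it.
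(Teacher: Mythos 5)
Your proof is correct: the identity $f(E)^\triangleright=\widehat{f}^{-1}(E^\triangleright)$ together with the definition of the $\triangleleft$-polar immediately yields $f(E^{\triangleright\triangleleft})\subseteq f(E)^{\triangleright\triangleleft}$, and your use of continuity (only to ensure $\psi\circ f\in\widehat{G}$) is exactly where it is needed. Note that the paper itself gives no proof of this proposition---it is quoted as a known fact from \cite{GLdualtheo} and \cite{DikLeo}---and your argument is the standard one found in those sources, so there is nothing further to compare.
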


Dikranjan and de Leo proved the following theorem.

\begin{samepage}
\begin{ftheorem} \label{thm:intro:DikLeo}
Let $\underline a= \{a_n\}_{n=0}^\infty$ be an increasing sequence of 
integers, and put $g_n = a_{n+1} -a_n$.

\begin{myalphlist}

\item
{\rm (\cite[1.1]{DikLeo})}
If $a_0>0$ and $g_n>1$ for every $n\in \mathbb{N}$, then
$K_{\underline{a},2}= \{0\}\cup\{\pm 2^{-(a_n+1)}\mid 
n\in \mathbb{N}\}$ is quasi-convex in $\mathbb{T}$.

\item
{\rm (\cite[1.2]{DikLeo})}
If $g_n>1$ for every $n\in \mathbb{N}$, then 
$R_{\underline{a},2}= \{0\}\cup\{\pm 2^{-(a_n+1)}\mid n\in \mathbb{N}\}$ 
is quasi-convex in $\mathbb{R}$.

\item
{\rm (\cite[1.4]{DikLeo})}
If $a_0\geq 0$ and $g_n>1$ for every $n\in \mathbb{N}$, then the set 
$L_{\underline{a},2}:=\{0\}\cup \{\pm 2^{a_n}\mid n\in \mathbb{N}\}$
is quasi-convex in $\mathbb{J}_2$.

\end{myalphlist}
\end{ftheorem}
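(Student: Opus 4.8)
The three assertions all reduce to the nontrivial inclusion $E^{\triangleright\triangleleft}\subseteq E$, since $E\subseteq E^{\triangleright\triangleleft}$ is automatic; equivalently, for every point $x$ of the ambient group lying outside $E$ I must exhibit a character in the polar $E^{\triangleright}$ sending $x$ outside $\mathbb{T}_+$. I will use throughout that, writing $\|y\|$ for the distance from $y\in\mathbb{R}$ to the nearest integer, one has $\pi(y)\in\mathbb{T}_+$ if and only if $\|y\|\le\tfrac14$. The relevant dual groups are $\widehat{\mathbb{T}}=\mathbb{Z}$ (with $t$ acting by $\pi(y)\mapsto\pi(ty)$), $\widehat{\mathbb{R}}=\mathbb{R}$ (with $t$ acting by $y\mapsto\pi(ty)$), and $\widehat{\mathbb{J}_2}=\mathbb{Z}(2^\infty)$, the subgroup of $\mathbb{T}$ consisting of elements of $2$-power order. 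In each case the condition $\chi(s)\in\mathbb{T}_+$ is a constraint on the binary (resp.\ $2$-adic) digits of the character, and the whole argument is a game played between those digits and the digits of the point $x$ to be separated.

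Part~(a) is the heart of the matter. First I would describe the polar $K^{\triangleright}\subseteq\mathbb{Z}$ of $K:=\{0\}\cup\{\pm2^{-(a_n+1)}\}$ explicitly: since $\|t\,2^{-(a_n+1)}\|\le\tfrac14$ says that the residue $t\bmod 2^{a_n+1}$ lies in $[0,2^{a_n-1}]\cup[3\cdot2^{a_n-1},2^{a_n+1})$, membership $t\in K^{\triangleright}$ is, up to boundary cases, exactly the requirement that the binary digits of $t$ in positions $a_n$ and $a_n-1$ agree, for every $n$. The separation step is then: given $x\in\mathbb{T}\setminus K$ with a fixed binary expansion, construct an integer $m$, built from finitely many binary digits, whose digits satisfy these agreement constraints (so that $m\in K^{\triangleright}$) and for which $\|mx\|>\tfrac14$ (so that $\chi_m(x)\notin\mathbb{T}_+$). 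Here the hypothesis $g_n>1$ plays the decisive structural role: it guarantees that the constrained digit-pairs $\{a_n,a_n-1\}$ are pairwise disjoint, so that each may be prescribed independently, leaving enough freedom to steer $mx$ toward $\tfrac12$. I expect this construction---and in particular the case analysis for points $x$ that agree with the constraint pattern over long initial stretches of their expansion---to be the main obstacle, since it is here that the gap condition must be exploited quantitatively and the boundary cases dispatched.

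For part~(b) I would reduce to~(a). Multiplication by $2^{-c}$ is a topological automorphism of $\mathbb{R}$, and applying Proposition~\ref{prop:intro:qc-hom} to it and to its inverse yields $Q_{\mathbb{R}}(2^{-c}R)=2^{-c}Q_{\mathbb{R}}(R)$; since $2^{-c}R_{\underline a,2}=R_{\underline a+c,2}$ and the gaps $g_n$ are unchanged, I may assume $a_0$ is as large as I like, in particular $a_0\ge2$, so that $R:=R_{\underline a,2}\subseteq(-\tfrac14,\tfrac14)$ and $\pi$ is injective on $R$ with $\pi(R)=K$. Now take $x\in\mathbb{R}\setminus R$. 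If $\pi(x)\notin K$, then by~(a) there is an integer $t\in K^{\triangleright}$ with $\pi(tx)\notin\mathbb{T}_+$; this $t$ lies in $R^{\triangleright}$ (as $\pi(tr)\in\mathbb{T}_+$ for all $r\in R$) and separates $x$. If instead $\pi(x)\in K$ but $x\notin R$, then $x=r+k$ with $r\in R$ and $k\in\mathbb{Z}\setminus\{0\}$; writing $k=2^{s}u$ with $u$ odd, the real character $t=2^{-(s+1)}$ satisfies $tR\subseteq(-\tfrac14,\tfrac14)$, hence $t\in R^{\triangleright}$, while $\chi_t(x)=\pi(tr+\tfrac12)$ lies outside $\mathbb{T}_+$. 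This exhausts all cases.

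Part~(c) runs parallel to~(a), with the circle replaced by $\mathbb{J}_2$ and $\mathbb{Z}$ replaced by $\mathbb{Z}(2^\infty)$; the two statements are mirror images under the duality $\mathbb{T}\leftrightarrow\mathbb{J}_2$, $\mathbb{Z}\leftrightarrow\mathbb{Z}(2^\infty)$. For $L:=\{0\}\cup\{\pm2^{a_n}\}$, a character $c=\pi(m/2^k)$ pairs with $2^{a_n}\in\mathbb{J}_2$ to give $\pi(m\,2^{a_n-k})$, so $c\in L^{\triangleright}$ amounts (for the indices $n$ with $a_n<k$) to the same type of digit-agreement condition on $m$ as in~(a). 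Given $x\in\mathbb{J}_2\setminus L$ with $2$-adic expansion $x=\sum_{i\ge0}d_i2^i$, I would build a character $c\in L^{\triangleright}$ driving $\langle x,c\rangle$ toward $\tfrac12$, the roles of low- and high-order digits being interchanged relative to~(a) because here $2^{a_n}\to0$ in the $2$-adic topology. The construction, and hence the main obstacle, is of exactly the same nature as in part~(a), and the hypothesis $g_n>1$ again supplies the decoupling of the digit constraints needed to carry it out.
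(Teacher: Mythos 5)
This statement is one the paper itself never proves: it is imported verbatim from \cite{DikLeo} (items 1.1, 1.2 and 1.4 there) as a background fact on which the paper's new results are built, so there is no in-paper argument to compare yours against and the proposal has to stand on its own. Your part (b) does stand: rescaling by $2^{-c}$ (via Proposition~\ref{prop:intro:qc-hom} applied to multiplication by $2^{-c}$ and its inverse) to force $R_{\underline a,2}\subseteq(-\tfrac14,\tfrac14)$, transferring an integer character $t\in K_{\underline a,2}^{\triangleright}$ when $\pi(x)\notin K_{\underline a,2}$, and using $t=2^{-(s+1)}$ to send $x=r+2^{s}u$ ($u$ odd) to $\pi(tr)+\tfrac12\notin\mathbb{T}_+$ when $\pi(x)\in K_{\underline a,2}$ but $x\notin R_{\underline a,2}$, is a complete and correct reduction of (b) to (a); it is in effect a hands-on version of the paper's own Theorem~\ref{thm:nec:pi-Y} and Corollary~\ref{cor:nec:pi-Y}, which is exactly how the paper deduces Theorem~\ref{thm:R2:main} from Theorem~\ref{thm:T2:main}.

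The genuine gap is in parts (a) and (c), i.e., in the substance of the theorem. What you provide there is correct but purely preparatory: the description of the polar ($t\in K_{\underline a,2}^{\triangleright}$ iff $t\bmod 2^{a_n+1}\in[0,2^{a_n-1}]\cup[3\cdot 2^{a_n-1},2^{a_n+1})$ for all $n$, which up to the boundary residue $2^{a_n-1}$ is the agreement of the binary digits of $t$ in positions $a_n$ and $a_n-1$) and the observation that $g_n>1$ makes the constrained digit pairs disjoint. But the theorem is precisely the claim that for \emph{every} $x\in\mathbb{T}\setminus K_{\underline a,2}$ one can actually construct $m\in K_{\underline a,2}^{\triangleright}$ with $\|mx\|>\tfrac14$, and this construction is nowhere carried out: you explicitly defer it (``I expect this construction \dots to be the main obstacle''), and likewise for (c). That deferred step is the entire content of \cite[1.1]{DikLeo} and \cite[1.4]{DikLeo}; it requires a genuine case analysis over the expansion of $x$ (in particular over points that are sums of several elements $\pm 2^{-(a_n+1)}$, where the digit freedom left by the disjoint constraint pairs must be exploited quantitatively), as one can gauge from the amount of machinery the paper itself needs for the analogous base-$3$ statements in \S\ref{sect:T3} and \S\ref{sect:J3} (Lemmas~\ref{lemma:T3:J12}, \ref{lemma:T3:b1}, \ref{lemma:T3:shift}, Theorem~\ref{thm:T3:Q12}, and the estimates concluding the proof of Theorem~\ref{thm:T3:main}). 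As written, then, your proposal proves (b) modulo (a), and for (a) and (c) supplies only the correct setup together with a restatement of the problem.
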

\end{samepage}

The question of whether the conditions formulated in 
Theorem~\ref{thm:intro:DikLeo}(a) are also necessary in order to assure 
that $K_{\underline{a},2}$ is quasi-convex was left open by Dikranjan and 
de Leo, and in fact, was formulated  as an open problem 
(cf.~\cite[5.1(a)]{DikLeo}). One of the main results of this paper is the 
following theorem, which provides a complete solution to this open 
problem.

\begin{samepage}
\begin{Ltheorem} \label{thm:T2:main}
Let $\underline a= \{a_n\}_{n=0}^\infty$ be an increasing sequence of 
non-negative
integers, and put \mbox{$x_n = 2^{-(a_n+1)}$}. The set
$K_{\underline a,2} = \{0\} \cup \{\pm x_n \mid n \in \mathbb{N}\}$
is quasi-convex in $\mathbb{T}$ if and only if  the following three
conditions hold:

\begin{myromanlist}

\item
$a_0 > 0$;

\item
the gaps $g_n=a_{n+1}-a_n$ satisfy 
$g_n > 1$ for all but possibly one index $n\in \mathbb{N}$;

\item
if $g_n =1$ for some $n\in \mathbb{N}$, then $g_{n+1} > 2$.

\end{myromanlist}
\end{Ltheorem}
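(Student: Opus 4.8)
The plan is to identify $\widehat{\mathbb T}$ with $\mathbb Z$, a character $k\in\mathbb Z$ acting by $t\mapsto kt$, so that $\mathbb T_+=\{t\in\mathbb T:\lVert t\rVert\le\tfrac14\}$, where $\lVert\cdot\rVert$ denotes the distance to the nearest integer. With $x_n=2^{-(a_n+1)}$ the polar becomes $K_{\underline a,2}^{\triangleright}=\{k\in\mathbb Z:\lVert kx_n\rVert\le\tfrac14\text{ for all }n\}$, and $K_{\underline a,2}$ is quasi-convex exactly when every $y\in\mathbb T\setminus K_{\underline a,2}$ admits some $k\in K_{\underline a,2}^{\triangleright}$ with $\lVert ky\rVert>\tfrac14$. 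Everything rests on a digit-level reading of this polar: writing $k\ge 0$ in binary as $k=\sum_j\beta_j2^j$, the condition $\lVert kx_n\rVert\le\tfrac14$ is equivalent to $k\bmod 2^{a_n+1}$ lying in $[-2^{a_n-1},2^{a_n-1}]$, i.e.\ to requiring the digit pair $(\beta_{a_n},\beta_{a_n-1})$ to be balanced (equal to $(0,0)$ or $(1,1)$), with the single boundary exception $(\beta_{a_n},\beta_{a_n-1})=(0,1)$ together with $\beta_j=0$ for all $j<a_n-1$. So the first step is to prove this equivalence carefully (treating $k<0$ by the symmetry $\lVert(-k)t\rVert=\lVert kt\rVert$). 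Its key consequence: when $g_n\ge 2$ the constrained pairs $\{a_n-1,a_n\}$ are pairwise disjoint and may be chosen freely, whereas a gap $g_n=1$ glues two pairs at position $a_n$ and forces the chain $\beta_{a_n-1}=\beta_{a_n}=\beta_{a_n+1}$, which sharply reduces the available characters.

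For sufficiency I would assume (i)--(iii) and separate an arbitrary $y\in\mathbb T\setminus K_{\underline a,2}$. Since $a_0>0$ forces $K_{\underline a,2}\subseteq\mathbb T_+$ and $1\in K_{\underline a,2}^{\triangleright}$, the point $k=1$ separates whenever $\lVert y\rVert>\tfrac14$, so one reduces to $0<y\le\tfrac14$ with $y\neq x_n$ for all $n$. I would then locate $y$ between consecutive dyadic scales and assemble a separating admissible $k$ by prescribing its balanced digit pairs. When every $g_n>1$ this is precisely the situation of Theorem~\ref{thm:intro:DikLeo}(a), whose construction applies verbatim; the new content is the unique admissible gap $g_N=1$, where the forced three-term chain deletes characters. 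Here condition (iii), $g_{N+1}\ge 3$, is exactly what leaves at least one free binary position immediately above the chain (the count of free positions between the chain top $a_N+1$ and the next constrained pair is $g_{N+1}-2$), and this surviving degree of freedom suffices to build a separating character. This local repair, grafted onto the DikLeo construction, is the only delicate point of the sufficiency direction.

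For necessity I would argue by contraposition, exhibiting in each failing case a point of the quasi-convex hull $Q_{\mathbb T}(K_{\underline a,2})=K_{\underline a,2}^{\triangleright\triangleleft}$ lying outside $K_{\underline a,2}$. If (i) fails, i.e.\ $a_0=0$, then $x_0=\tfrac12$ forces every $k\in K_{\underline a,2}^{\triangleright}$ to be even, whence $\lVert k(y+\tfrac12)\rVert=\lVert ky\rVert$ and the bipolar is invariant under translation by $\tfrac12$; as the points of $K_{\underline a,2}$ accumulate only at $0$, the translates $x_n+\tfrac12$ lie in the hull but not in $K_{\underline a,2}$. If instead (i) holds but (ii) or (iii) fails, I would produce an explicit dyadic witness built from nearby points of $K_{\underline a,2}$: for a violation of (iii) (a gap $g_N=1$ followed by $g_{N+1}=2$) the point $y=x_{N+1}+x_{N+2}$, and for a violation of (ii) (two indices $n<m$ with $g_n=g_m=1$) a suitable combination such as $x_{n+1}\pm x_{m+1}$. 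The crucial simplification is that such a $y$ is a dyadic rational $p/2^s$, so $\lVert ky\rVert$ depends only on $k\bmod 2^s$; the admissible residues modulo $2^s$ form a finite symmetric set governed only by the indices with $a_n\le s-1$ (higher indices can merely remove residues, which only helps), and one checks against this finite set that $\lVert ky\rVert\le\tfrac14$ always, while confirming $y\notin K_{\underline a,2}$.

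I expect the main obstacle to be precisely this last verification on the necessity side: showing that the candidate witnesses are separated by \emph{no} admissible character. It requires the sharp digit description of the polar together with honest bookkeeping of the boundary exceptions (which, as small computations reveal, are frequently killed outright by a neighbouring constraint), and a witness chosen to match exactly how the gap hypotheses fail. The sufficiency direction, by contrast, is essentially the known construction of Theorem~\ref{thm:intro:DikLeo}(a) augmented by the single local repair dictated by condition (iii).
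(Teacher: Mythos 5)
Your necessity half is sound and, in substance, coincides with the paper's: your three witnesses are exactly the ones the paper uses, namely translation-invariance of the bipolar under $+\tfrac12$ when $a_0=0$ (Theorem~\ref{thm:nec:gen}(a)), the point $x_{n+1}+x_{m+1}$ when two gaps equal $1$ (Lemma~\ref{lemma:nec:h12}(a) plus Lemma~\ref{lemma:nec:b12}), and the point $x_{N+1}+x_{N+2}=5\cdot 2^{-(a_{N+2}+1)}$ when $g_N=1$, $g_{N+1}=2$ (Lemma~\ref{lemma:nec:h12}(c)). The paper verifies non-separation by first transferring the problem to $\mathbb{R}$ (Theorem~\ref{thm:nec:pi-Y}, Theorem~\ref{thm:nec:specR}) and computing polars such as $\{1,4,8\}^\triangleright$ directly, whereas you verify it by a digit-level description of $K_{\underline a,2}^\triangleright$ modulo $2^s$; your description of the polar (balanced pairs $(\beta_{a_n},\beta_{a_n-1})$ with the stated boundary exception) is correct, and your observation that constraints from indices with $a_n\geq s$ only shrink the polar is the right way to make the check finite. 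This route would work, though you leave the finite verifications unexecuted.

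The genuine gap is in sufficiency, and it sits exactly where the paper invests most of its effort. Your entire treatment of the case $g_N=1$ is the assertion that the $g_{N+1}-2\geq 1$ free digit positions above the three-term chain ``suffice to build a separating character,'' grafted ``verbatim'' onto the construction behind Theorem~\ref{thm:intro:DikLeo}(a). No construction is given, and the claim is not a routine local repair: the points that must be excluded from the hull are of the form $\pm x_N\pm x_{N+1}+x'$ (in the paper's normalized base case $a_0=1$, $a_1=2$, these are the points of $\pm\tfrac18+K_{\underline a',2}$), and the characters that separate them in the paper's proof of Proposition~\ref{prop:T2:spec} are $2^{a_n-1}-1$ and $2^{a_n-1}-7$ --- they depend on the tail index $n$ of the point being excluded and must be checked against \emph{every} constraint pair of the sequence, using $g_1>2$ and $g_n>1$ globally, not just one free digit near the chain. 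Moreover the paper cannot run this argument directly for a unit gap at an arbitrary position $n_0$: it needs the gluing Lemma~\ref{lemma:T2:div}(b) and an induction on $n_0$ (via Corollary~\ref{cor:T2:spec} as base case) to reduce to the normalized situation. Finally, ``applying the DikLeo construction verbatim and repairing it'' presupposes access to the internals of that proof, which you never state; the paper instead uses Theorem~\ref{thm:intro:DikLeo}(a) only as a black box, applied to the transformed sequence $8K_{\underline a,2}=K_{\underline{a'-3},2}$, followed by a coset analysis of $f^{-1}$ for $f(x)=8x$ and the auxiliary character $7$. As it stands, your proposal proves necessity in outline but leaves the hard half of the theorem --- sufficiency in the presence of a unit gap --- unproved.
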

\end{samepage}

Theorem~\ref{thm:T2:main} also yields a complete 
characterization of the sequences $\underline{a}$ such that
$R_{\underline{a},2}$ is quasi-convex.

\begin{samepage}
\begin{Ltheorem} \label{thm:R2:main}
Let $\underline a= \{a_n\}_{n=0}^\infty$ be an increasing sequence of
integers, and put \mbox{$r_n = 2^{-(a_n+1)}$}. The set
$R_{\underline a,2} = \{0\} \cup \{\pm r_n \mid n \in \mathbb{N}\}$
is quasi-convex in $\mathbb{R}$ if and only if the following two
conditions hold:

\begin{myromanlist}

\item
the gaps $g_n=a_{n+1}-a_n$ satisfy
$g_n > 1$ for all but possibly one index $n\in\mathbb{N}$;

\item
if $g_n =1$ for some $n\in \mathbb{N}$, then $g_{n+1} > 2$. 

\end{myromanlist}
\end{Ltheorem}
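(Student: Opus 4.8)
The plan is to \emph{derive} Theorem~\ref{thm:R2:main} from Theorem~\ref{thm:T2:main} by transporting quasi-convexity between $\mathbb{R}$ and $\mathbb{T}$ along the projection $\pi$. First I would exploit the fact that, for each $k\in\mathbb{Z}$, multiplication by $2^{k}$ is a topological automorphism of $\mathbb{R}$; since quasi-convexity is invariant under topological automorphisms and $2^{k}R_{\underline a,2}=R_{\underline a',2}$ with $a_n'=a_n-k$, the property ``$R_{\underline a,2}$ is quasi-convex in $\mathbb{R}$'' depends only on the gaps $g_n$ and is unchanged by shifting $\underline a$. As conditions~(i),(ii) of Theorem~\ref{thm:R2:main} are themselves shift-invariant, I may assume throughout that $a_0\ge 1$. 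This single reduction already accounts for the sole discrepancy with Theorem~\ref{thm:T2:main}: its extra hypothesis $a_0>0$ becomes automatic after the shift, while its remaining hypotheses are exactly conditions~(i),(ii) here.

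With $a_0\ge 1$ fixed, the core is a \emph{bridge lemma}: $R_{\underline a,2}$ is quasi-convex in $\mathbb{R}$ if and only if $K_{\underline a,2}$ is quasi-convex in $\mathbb{T}$. Granting this, the theorem follows at once from Theorem~\ref{thm:T2:main} applied to the shifted sequence. To set up the bridge I record two facts. Writing $r_0=2^{-(a_0+1)}$, every $y$ in the interval $[-2^{a_0-1},2^{a_0-1}]$ satisfies $|y|\,r_n\le 2^{a_0-1}r_0=\tfrac14$ for all $n$, so this interval lies in $R_{\underline a,2}^{\triangleright}$; letting $y$ range over it forces each $x\in Q_{\mathbb{R}}(R_{\underline a,2})=R_{\underline a,2}^{\triangleright\triangleleft}$ to have $x\cdot[-2^{a_0-1},2^{a_0-1}]\subseteq\mathbb{T}_+$, and since this is a connected interval through $0$ it must lie in $[-\tfrac14,\tfrac14]$, whence $|x|\le r_0$. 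Secondly, Proposition~\ref{prop:intro:qc-hom} applied to $\pi$ gives $\pi(Q_{\mathbb{R}}(R_{\underline a,2}))\subseteq Q_{\mathbb{T}}(K_{\underline a,2})$. As $a_0\ge 1$ makes $r_0\le\tfrac14$, the map $\pi$ is injective on $[-r_0,r_0]$ and $\pi^{-1}(K_{\underline a,2})\cap[-r_0,r_0]=R_{\underline a,2}$. Combining the two facts yields
\[
Q_{\mathbb{R}}(R_{\underline a,2})\subseteq \pi^{-1}\!\bigl(Q_{\mathbb{T}}(K_{\underline a,2})\bigr)\cap[-r_0,r_0].
\]
In particular, if $K_{\underline a,2}$ is quasi-convex then the right-hand side is exactly $R_{\underline a,2}$, so $R_{\underline a,2}$ is quasi-convex; this disposes of sufficiency in Theorem~\ref{thm:R2:main} and of the ``if'' half of the bridge.

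The hard part is the converse half, equivalently its contrapositive: if $K_{\underline a,2}$ is \emph{not} quasi-convex in $\mathbb{T}$, produce a point of $Q_{\mathbb{R}}(R_{\underline a,2})\setminus R_{\underline a,2}$. The displayed inclusion only embeds $Q_{\mathbb{R}}(R_{\underline a,2})$ into $Q_{\mathbb{T}}(K_{\underline a,2})$; the reverse inclusion $\pi^{-1}(Q_{\mathbb{T}}(K_{\underline a,2}))\cap[-r_0,r_0]\subseteq Q_{\mathbb{R}}(R_{\underline a,2})$ is the crux, and this is where I expect the main obstacle. Observe that $R_{\underline a,2}^{\triangleright}=\{y\in\mathbb{R}\mid \|y\,2^{-(a_n+1)}\|\le\tfrac14\ \forall n\}$ (with $\|\cdot\|$ the distance to the nearest integer) contains not only the integer characters $K_{\underline a,2}^{\triangleright}=R_{\underline a,2}^{\triangleright}\cap\mathbb{Z}$ but also genuinely non-integer reals, whereas a point $x$ with $\pi(x)\in Q_{\mathbb{T}}(K_{\underline a,2})$ is only guaranteed to pair into $\mathbb{T}_+$ against the \emph{integer} characters. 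The plan is to analyse $R_{\underline a,2}^{\triangleright}$ through the base-$2$ expansions dictated by $r_n=2^{-(a_n+1)}$ and to show that every non-integer $y\in R_{\underline a,2}^{\triangleright}$ is governed closely enough by a neighbouring integer $m\in K_{\underline a,2}^{\triangleright}$ that, for $|x|\le r_0$, the relation $xy\in\mathbb{T}_+$ follows from $mx\in\mathbb{T}_+$; the delicate cases are the boundary situations $\|mx\|=\tfrac14$, which is exactly where the gap conditions~(i),(ii) intervene. Concretely, rather than characterising the whole polar I would lift the explicit witnesses that the proof of Theorem~\ref{thm:T2:main} supplies when a gap condition fails---small dyadic combinations such as $r_n\pm r_{n+1}$---and verify directly, using the dyadic description of $R_{\underline a,2}^{\triangleright}$, that their small real representatives lie in $Q_{\mathbb{R}}(R_{\underline a,2})$ but not in $R_{\underline a,2}$, thereby matching the failure of quasi-convexity in $\mathbb{T}$ with its failure in $\mathbb{R}$.
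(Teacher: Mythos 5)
Your sufficiency argument is correct and coincides with the paper's: after normalizing by the automorphism $x\mapsto 2^{k}x$ of $\mathbb{R}$ so that $a_0\geq 1$, you invoke Theorem~\ref{thm:T2:main} and transfer quasi-convexity from $\mathbb{T}$ down to $\mathbb{R}$ exactly as in the paper's Theorem~\ref{thm:nec:pi-Y} and Corollary~\ref{cor:nec:pi-Y}; your derivation of $Q_{\mathbb{R}}(R_{\underline a,2})\subseteq[-r_0,r_0]$ from the interval of characters $[-2^{a_0-1},2^{a_0-1}]\subseteq R_{\underline a,2}^{\triangleright}$ is a legitimate substitute for the paper's citation that $[-M,M]$ is quasi-convex.

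The necessity half, however, has a genuine gap. The converse of your bridge lemma ($R_{\underline a,2}$ quasi-convex in $\mathbb{R}$ implies $K_{\underline a,2}$ quasi-convex in $\mathbb{T}$) admits no soft transfer proof: for a general compact $Y\subseteq(-\tfrac12,\tfrac12)$ the implication is false --- $Y=[-\tfrac13,\tfrac13]$ is quasi-convex in $\mathbb{R}$, yet $\pi(Y)^{\triangleright}=\{0\}$, so $Q_{\mathbb{T}}(\pi(Y))=\mathbb{T}$ --- hence any proof would have to exploit the specific structure of $R_{\underline a,2}$, which is tantamount to proving the theorem itself. The paper never establishes or needs this direction; its only transfer runs from $\mathbb{T}$ to $\mathbb{R}$, and indeed its dependency is the reverse of your plan: necessity in $\mathbb{T}$ (Theorem~\ref{thm:nec:gen}(b),(c)) is \emph{deduced from} necessity in $\mathbb{R}$ (Theorem~\ref{thm:nec:specR}), so there are no ``witnesses supplied by the proof of Theorem~\ref{thm:T2:main}'' to lift --- they live natively in $\mathbb{R}$. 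Your fallback (exhibit explicit points of $Q_{\mathbb{R}}(R_{\underline a,2})\setminus R_{\underline a,2}$ when (i) or (ii) fails) is the right strategy and is what the paper does, but you never execute it, and the execution is the entire mathematical content: (1) the precise witnesses, namely $r_{n_1+1}+r_{n_2+1}$ when $g_{n_1}=g_{n_2}=1$ with $n_1\neq n_2$, and $5r_{n+2}=r_{n+1}+r_{n+2}$ when $g_n=1$ and $g_{n+1}=2$; (2) the polar computations placing them in the hull, i.e.\ Lemma~\ref{lemma:nec:h12}(a),(c) --- for instance that every character $\chi$ with $\chi(h),\chi(4h),\chi(8h)\in\mathbb{T}_+$ satisfies $\chi(5h)\in\mathbb{T}_+$, proved by computing $\{1,4,8\}^{\triangleright}\subseteq\mathbb{T}_8\cup(\tfrac{15}{64}+\mathbb{T}_{16})\cup(-\tfrac{15}{64}+\mathbb{T}_{16})$; these computations are valid for \emph{every} character of \emph{every} abelian topological group, in particular for all non-integer characters of $\mathbb{R}$, which dissolves your worry about $R_{\underline a,2}^{\triangleright}$ being larger than $K_{\underline a,2}^{\triangleright}$ far more cheaply than your proposed ``neighbouring integer'' analysis; and (3) the arithmetic fact that the witnesses are not in $R_{\underline a,2}$ (Lemma~\ref{lemma:nec:b12}: $\tfrac1a+\tfrac1b=1$ in positive integers forces $a=b=2$). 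Without these three pieces, your necessity direction remains a plan rather than a proof.
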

\end{samepage}

It turns out that by replacing the prime $2$ with $3$ in parts (a) and (c) 
of Theorem~\ref{thm:intro:DikLeo}, one obtains conditions that are not 
only sufficient, but also necessary.

\begin{Ltheorem} \label{thm:T3:main}
Let $\underline a= \{a_n\}_{n=0}^\infty$ be an increasing sequence of 
non-negative integers, and put 
\mbox{$x_n = 3^{-(a_n+1)}$}. The set
$K_{\underline a,3} = \{0\} \cup \{\pm x_n \mid n \in \mathbb{N}\}$
is quasi-convex in $\mathbb{T}$ if and only if 
the following two 
conditions 
hold:

\begin{myromanlist}

\item
$a_0 > 0$;

\item
the  gaps $g_n=a_{n+1}-a_n$ satisfy $g_n > 1$ for all $n\in \mathbb{N}$.

\end{myromanlist}
\end{Ltheorem}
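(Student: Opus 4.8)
The plan is to identify $\widehat{\mathbb{T}}\cong\mathbb{Z}$, with $m\in\mathbb{Z}$ acting as $\chi_m(t)=mt$, and to write $\|t\|$ for the distance from $t$ to the nearest integer, so that $\chi_m(t)\in\mathbb{T}_+$ exactly when $\|mt\|\le\tfrac14$. Since $0\in K_{\underline a,3}$ contributes nothing and $\pm x_n$ impose the same condition, the polar is $K_{\underline a,3}^\triangleright=\{m\in\mathbb{Z}\mid \|m\,3^{-(a_n+1)}\|\le\tfrac14 \text{ for all } n\}$, and quasi-convexity amounts to showing that every $y\in\mathbb{T}\setminus K_{\underline a,3}$ admits some $m\in K_{\underline a,3}^\triangleright$ with $\|my\|>\tfrac14$. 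I would first record the balanced-ternary description of the polar: writing $m=\sum_j\varepsilon_j3^j$ with $\varepsilon_j\in\{-1,0,1\}$, one has $\|m\,3^{-(a_n+1)}\|=|f_n|$ where $f_n=\sum_{j=0}^{a_n}\varepsilon_j3^{\,j-a_n-1}$, and since $\tfrac14=(0.\overline{1\bar1})_3$ in balanced ternary, the condition $|f_n|\le\tfrac14$ becomes a comparison of the digit block $\varepsilon_{a_n}\varepsilon_{a_n-1}\cdots$ with $0.\overline{1\bar1}$; in particular it forces the leading implication $\varepsilon_{a_n}\neq0\Rightarrow\varepsilon_{a_n-1}=-\varepsilon_{a_n}$.

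For the necessity of (i) and (ii) I would exhibit explicit trapped points. If $a_0=0$ then $f_0=\varepsilon_0/3$, so $|f_0|\le\tfrac14$ forces $\varepsilon_0=0$; hence $K_{\underline a,3}^\triangleright\subseteq3\mathbb{Z}$, and for $m\in3\mathbb{Z}$ one has $\|m(y+\tfrac13)\|=\|my\|$, so $K_{\underline a,3}^{\triangleright\triangleleft}$ is invariant under translation by $\tfrac13$. Translating $x_1\in K_{\underline a,3}$ gives $x_1+\tfrac13\in K_{\underline a,3}^{\triangleright\triangleleft}$, and since $a_1\ge1$ this point lies in $(\tfrac13,\tfrac49]$, hence outside $K_{\underline a,3}$, so quasi-convexity fails. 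If instead some gap satisfies $g_n=1$, then $x_n=3x_{n+1}$, and I claim $y=2x_{n+1}\notin K_{\underline a,3}$ is trapped: for $m\in K_{\underline a,3}^\triangleright$ set $\theta=mx_{n+1}$, so $|\theta|\le\tfrac14$ and $\|3\theta\|\le\tfrac14$; the only possibilities are $\theta\in[-\tfrac1{12},\tfrac1{12}]$ or $\theta=\pm\tfrac14$, and the latter is impossible because $\theta$ has denominator a power of $3$, whence $\|my\|=\|2\theta\|\le\tfrac16$. As $2x_{n+1}$ is neither $0$, nor a power $3^{-(a_k+1)}$, nor the negative of one, it is not in $K_{\underline a,3}$, and again quasi-convexity fails.

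For sufficiency, assume $a_0>0$ and $g_n>1$ for all $n$. The structural consequence of these hypotheses is that the blocks $\{a_n,a_n-1\}$ governing the leading polar conditions are pairwise disjoint and every index $a_n-1$ is $\ge0$, so the ``unit'' characters $m_n=2\cdot3^{\,a_n-1}=3^{a_n}-3^{a_n-1}$ (with $\varepsilon_{a_n}=1$, $\varepsilon_{a_n-1}=-1$) lie in $K_{\underline a,3}^\triangleright$: indeed $\|m_nx_n\|=\tfrac29\le\tfrac14$, while for $k\neq n$ the product $m_nx_k$ is either an integer or bounded by $2\cdot3^{-4}$. Given $y\in\mathbb{T}\setminus K_{\underline a,3}$, I would expand $y$ in balanced ternary and use the free digit guaranteed between consecutive $a_k$ (from $g_k\ge2$), together with the headroom at the top provided by $a_0\ge1$, to assemble a single $m\in K_{\underline a,3}^\triangleright$ whose digits amplify the first place where the expansion of $y$ departs from the single-digit pattern of an element of $K_{\underline a,3}$, forcing $\|my\|>\tfrac14$.

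The main obstacle is exactly this last construction: unlike the leading implication, the sharp condition $|f_n|\le\tfrac14$ couples the lower digits through the comparison with $0.\overline{1\bar1}$, so the digits of $m$ cannot be chosen freely but must be tracked so as to control their cumulative contribution to every $f_k$ while still producing the required gain at the targeted position. This is where the strict inequality $g_n>1$ for all $n$ (rather than the weaker hypothesis admissible for $p=2$) is essential, and I expect the bulk of the work and the most delicate case analysis to reside here.
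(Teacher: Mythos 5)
Your necessity arguments are correct, and they are in substance the paper's own: your $a_0=0$ case reproduces the mechanism of Theorem~\ref{thm:nec:gen}(a) (the polar is forced into $3\mathbb{Z}$, so the quasi-convex hull is invariant under translation by $\frac13$), and your $g_n=1$ case is exactly the mechanism of Proposition~\ref{prop:nec:2x} and Corollary~\ref{cor:nec:2x}: since $mx_{n+1}$ has denominator a power of $3$ it cannot equal $\pm\frac14$, whence $2x_{n+1}\in Q_{\mathbb{T}}(\{x_{n+1},3x_{n+1}\})\subseteq Q_{\mathbb{T}}(K_{\underline a,3})$, a point outside $K_{\underline a,3}$.

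The sufficiency half, however, contains a genuine gap, and you name it yourself: the entire content of that direction is the construction of a character in $K_{\underline a,3}^\triangleright$ separating a given point of $\mathbb{T}\setminus K_{\underline a,3}$, and your proposal stops at the declaration that such an $m$ should be ``assembled'' by digit-tracking, conceding that the delicate case analysis resides precisely there. As written this is a plan, not a proof. Moreover, the plan attacks an arbitrary $y\in\mathbb{T}\setminus K_{\underline a,3}$ with a single construction, which is harder than what is actually needed. The paper decouples the problem into two steps. First (Theorem~\ref{thm:T3:Q12}), using only the characters $3^k$ and $2\cdot 3^k$ with $k\in\mathbb{N}\setminus\underline a$---which lie in the polar by Lemma~\ref{lemma:T3:J12}---together with the leading-digit Lemma~\ref{lemma:T3:b1}, it shows that every point of $Q_{\mathbb{T}}(K_{\underline a,3})$ already has the sparse form $x=\sum_n\varepsilon_nx_n$ with $\varepsilon_n\in\{-1,0,1\}$; no coupling between scales arises because each such character tests one balanced-ternary digit at a time. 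Second, if at least two coefficients are nonzero, say at positions $k=n_0<l=n_1$ with signs $1$ and $\rho$, the single explicit character $\chi=(3^{a_l-a_k}+2\rho)\,\eta_{a_k-1}$ lies in $K_{\underline a,3}^\triangleright$ (Lemma~\ref{lemma:T3:shift}; this is where $a_0>0$ enters, guaranteeing $a_k-1\geq 0$), and
\begin{align*}
\chi(x)=\frac{\rho}{3}+\frac{2}{3^{a_l-a_k+2}}+\chi(x^\prime),
\qquad
\frac{2}{3^{a_l-a_k+2}}+|\chi(x^\prime)|\leq\frac{1}{24},
\end{align*}
the tail estimate being geometric thanks to $g_n>1$; hence $\chi(x)\notin\mathbb{T}_+$, so any point of the hull is a single $\pm x_{n_0}$. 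Your ``unit characters'' $2\cdot 3^{a_n-1}$ are particular polar elements covered by Lemma~\ref{lemma:T3:J12}, but they do not suffice for this separation; the characters that do the work carry the factor $3^{a_l-a_k}\pm 2$, whose two summands play the two leading nonzero digits of $x$ against each other. Until you supply this two-step reduction (or a completed substitute for your digit-assembling construction), the sufficiency direction---the substance of the theorem---remains unproved.
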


\begin{Ltheorem} \label{thm:J3:main}
Let $\underline a= \{a_n\}_{n=0}^\infty$ be an increasing sequence of 
non-negative integers, and put \mbox{$y_n = 3^{a_n}$}. The set
$L_{\underline a,3} = \{0\} \cup \{\pm y_n \mid n \in \mathbb{N}\}$
is quasi-convex in $\mathbb{J}_3$ if and only if the gaps
$g_n=a_{n+1}-a_n$ satisfy the condition  $g_n > 1$ for all $n\in \mathbb{N}$.
\end{Ltheorem}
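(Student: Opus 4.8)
The plan is to pass to Pontryagin duality $\widehat{\mathbb{J}_3}\cong\mathbb{Z}(3^\infty)$ and read everything off base-$3$ digits. A character corresponds to an element $t\in\mathbb{Z}(3^\infty)$ with a terminating expansion $t=\sum_{i\ge1}t_i3^{-i}$ modulo $\mathbb{Z}$, where $t_i\in\{0,1,2\}$, and the pairing gives $\chi_t(3^{a})=\pi(T)$ with $T=0.t_{a+1}t_{a+2}\cdots$ the shifted tail of $t$ read in base $3$. Since $\mathbb{T}_+=\pi([-\tfrac14,\tfrac14])$ and $\tfrac14=0.\overline{02}$, $\tfrac34=0.\overline{20}$ in base $3$, a tail lies in $\mathbb{T}_+$ exactly when its representative in $[0,1)$ lies in $[0,\tfrac14]\cup[\tfrac34,1)$. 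As $\mathbb{T}_+$ is symmetric and $\chi_t(-y)=-\chi_t(y)$, I would first record the working description that $\chi_t\in L_{\underline a,3}^{\triangleright}$ iff for every $n$ the tail $T_n:=0.t_{a_n+1}t_{a_n+2}\cdots$ lies in $[0,\tfrac14]\cup[\tfrac34,1)$, noting that consecutive tails are tied by $3^{g_n}T_n\equiv(t_{a_n+1}\cdots t_{a_{n+1}})_3+T_{n+1}$, in particular $3T_n=t_{a_n+1}+T_{n+1}$ when $g_n=1$.

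For necessity I would argue contrapositively and exhibit a point of $L_{\underline a,3}^{\triangleright\triangleleft}\setminus L_{\underline a,3}$ as soon as some gap equals $1$. Assume $g_n=1$, so $a_{n+1}=a_n+1$, and take $x=2\cdot3^{a_n}$; this is neither $0$ nor any $\pm3^{a_k}$ (the minus sign producing an all-$2$'s tail, whereas $x$ terminates), so $x\notin L_{\underline a,3}$. The crucial computation is that the constraints at $a_n$ and $a_n+1$ interact through $3T_n=t_{a_n+1}+T_{n+1}$: the value $t_{a_n+1}=1$ is impossible since it forces $T_n\in[\tfrac13,\tfrac23)$; if $t_{a_n+1}=0$ then $T_n=T_{n+1}/3$, and combining $T_n\le\tfrac14$ with $T_{n+1}\in[0,\tfrac14]\cup[\tfrac34,1)$ forces $T_{n+1}\le\tfrac14$ and hence $T_n\le\tfrac1{12}$; the case $t_{a_n+1}=2$ is symmetric and gives $T_n\ge\tfrac{11}{12}$. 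Therefore $\chi_t(x)=\pi(2T_n)$ with $2T_n\in[0,\tfrac16]\cup[\tfrac56,1)\subseteq\mathbb{T}_+$ for every $\chi_t\in L_{\underline a,3}^{\triangleright}$, so $x\in L_{\underline a,3}^{\triangleright\triangleleft}$ and $L_{\underline a,3}$ is not quasi-convex.

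For sufficiency, assuming $g_n>1$ for all $n$, I would prove $Q_{\mathbb{J}_3}(L_{\underline a,3})=L_{\underline a,3}$ by producing, for an arbitrary $x=\sum_{i\ge0}x_i3^i\notin L_{\underline a,3}$, a character $\chi_t\in L_{\underline a,3}^{\triangleright}$ with $\chi_t(x)\notin\mathbb{T}_+$. The nonzero elements of $L_{\underline a,3}$ are exactly the single-digit words $3^{a_k}$ and the all-$2$'s tails $-3^{a_k}$, so $x\notin L_{\underline a,3}$ means its expansion departs from every such pattern at some least position. The idea is to place a detecting digit $t_p=2$ at a position $p\notin\{a_j+1:j\}$, which makes $\chi_t(x)$ acquire a value outside $[0,\tfrac14]\cup[\tfrac34,1)$ (such as $\tfrac13$ or $\tfrac23$) while, because $p$ avoids all the leading positions $a_j+1$, no tail $T_j$ is driven out of $[0,\tfrac14]\cup[\tfrac34,1)$. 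The hypothesis $g_k\ge2$ is what keeps the forbidden set $\{a_j+1\}$ from filling up the expansion and thus leaves such positions $p$ available near the place where $x$ deviates; for instance on the necessity witness $x=2\cdot3^{a_n}$ one may now take $t_{a_n+2}=2$, getting $T_n=\tfrac29$ and $\chi_t(x)=\tfrac49\notin\mathbb{T}_+$, a choice that is blocked when $g_n=1$ because then $a_n+2=a_{n+1}+1$ is itself forbidden.

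The hard part will be the sufficiency construction, where the separation must be organized into cases according to the behaviour of $x$ at its first deviating position: a genuinely new digit (a $1$ or $2$ outside $\{a_k\}$, or a digit $\ge2$ at some $a_k$), or a premature break or unexpected continuation of a run of $2$'s signalling that $x$ leaves an admissible tail $-3^{a_k}$. In each case one must verify \emph{globally} that the inserted digit of $t$ respects all of the infinitely many tail constraints defining $L_{\underline a,3}^{\triangleright}$, which I would control by propagating the constraint $[0,\tfrac14]\cup[\tfrac34,1)$ forward through the gaps using the digit description from the first step. The condition $g_k\ge2$ keeps this propagation loose enough to always leave room for the separating digit, exactly as the witness $2\cdot3^{a_n}$ shows it must fail to do when a gap collapses to $1$.
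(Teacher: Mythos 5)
Your necessity argument is essentially correct: the digit computation showing that every $\chi_t\in L_{\underline a,3}^{\triangleright}$ must have $T_n\in[0,\tfrac1{12}]\cup[\tfrac{11}{12},1)$ when $g_n=1$, so that $\chi_t(2\cdot3^{a_n})\in\mathbb{T}_+$ always, is a hands-on verification of what the paper gets structurally from Corollary~\ref{cor:nec:J-2x} (no element of $\widehat{\mathbb{J}_3}=\mathbb{Z}(3^\infty)$ has order divisible by $4$, so $\chi(x)=\pm\tfrac14$ is impossible and $2x\in Q_{\mathbb{J}_3}(\{x,3x\})$). One detail to patch: in the case $t_{a_n+1}=0$, the constraint $T_n\le\tfrac14$ only gives $T_{n+1}\le\tfrac34$, and the boundary value $T_{n+1}=\tfrac34$ (which would give $T_n=\tfrac14$ and $2T_n=\tfrac12\notin\mathbb{T}_+$) must be excluded by observing that tails of elements of $\mathbb{Z}(3^\infty)$ are triadic rationals; the same remark is needed in the case $t_{a_n+1}=2$.

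The sufficiency direction, however, has a genuine gap, and not merely because you defer ``the hard part'': the mechanism you propose cannot be completed. Your detecting characters are single-digit characters ($t_p=2$ at one position $p\notin\{a_j+1\}$, i.e.\ $\chi=2\zeta_{p-1}$ in the paper's notation), and your argument that the polar constraints survive (``$p$ avoids all leading positions $a_j+1$'') is only valid for such single-digit $t$. But take $x=3^{a_k}+3^{a_l}$ with $k<l$, which is not in $L_{\underline a,3}$. For any $c\zeta_{p-1}\in L_{\underline a,3}^{\triangleright}$ with $c\in\{1,2\}$ and $p\notin\{a_j+1\}$: if $p\le a_l$ then $c\zeta_{p-1}(3^{a_l})\equiv 0$ and $\chi(x)=\chi(3^{a_k})\in\mathbb{T}_+$ by the polar condition, while if $p\ge a_l+2$ then $\chi(x)\le 2\cdot3^{-2}+2\cdot3^{-4}=\tfrac{20}{81}<\tfrac14$, so again $\chi(x)\in\mathbb{T}_+$. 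Thus no single inserted digit separates $x$ from $L_{\underline a,3}$; indeed this is exactly the content of the paper's Theorem~\ref{thm:J3:Q12}, which says that the common polar of all single-digit characters in $L_{\underline a,3}^{\triangleright}$ is the much larger set of \emph{all} sums $\sum\varepsilon_ny_n$ with $\varepsilon_n\in\{-1,0,1\}$. The paper's proof therefore uses that theorem only as a first reduction, and then kills the multi-term sums with genuinely two-digit characters $\chi=(3^{a_l-a_k}\pm2)\,\zeta_{a_l+1}$ (Lemma~\ref{lemma:J3:shift}), whose two digits interact on $x=y_{n_0}+\rho y_{n_1}+y_{n_2}z$ to produce a value near $\pm\tfrac13$, the leftover term being annihilated because $\ker\zeta_{a_l+1}=3^{a_l+2}\mathbb{J}_3$ is an ideal. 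Your outline contains no device of this kind, so the case analysis you postpone is not just laborious---within the class of characters your plan allows, it cannot succeed.
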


In spite of the apparent similarity between the Main Lemma of 
\cite{DikLeo} and Theorems~\ref{thm:T3:Q12} and~\ref{thm:J3:Q12},
our techniques for proving Theorems~\ref{thm:T3:main} 
and~\ref{thm:J3:main} do differ from the ones used by Dikranjan and de 
Leo to prove Theorem~\ref{thm:intro:DikLeo} (a) and (c) 
(cf.~\cite{DikLeo}).  We believe 
that the techniques presented here can also be used to prove appropriate 
generalizations of Theorems~\ref{thm:T3:main} and~\ref{thm:J3:main} for 
arbitrary primes $p > 2$, at least as far as sufficiency of the conditions 
is concerned.

The paper is structured as follows. In \S\ref{sect:nec}, we present
conditions that are necessary for the quasi-convexity of a sequence 
converging to zero. These results are used later on, in the proofs 
of the necessity of the conditions in
Theorems~\ref{thm:T2:main}, \ref{thm:R2:main}, and~\ref{thm:T3:main}.
In order to cover all three cases, we consider sequences of rationals 
of the form $\tfrac 1 {b_n}$ in $\mathbb{R}$, where $b_n \mid b_{n+1}$ for 
every $n \in \mathbb{N}$, as well as their images under $\pi$ in 
$\mathbb{T}$. \S \ref{sect:T2R2} is dedicated to the proof of 
Theorems~\ref{thm:T2:main} and~\ref{thm:R2:main}, the latter being an easy 
consequence of the earlier. In \S\ref{sect:T3}, we prove 
Theorem~\ref{thm:T3:main}, and in \S\ref{sect:J3},
the proof of Theorem~\ref{thm:J3:main} is presented.


\section{Necessary conditions in $\boldsymbol{\mathbb{T}}$ and 
$\boldsymbol{\mathbb{R}}$ for quasi-convexity}

\label{sect:nec}

Let $\{b_n\}_{n=0}^\infty$ be an increasing sequence of natural numbers 
such that $b_n | b_{n+1}$ for every $n \in \mathbb{N}$, and $b_0 > 1$.
In this section, we are concerned with finding conditions that are 
necessary for
\begin{align}
X & = \{0\}\cup \left\{\left. \pm \dfrac 1 {b_n} \right|  n \in 
\mathbb{N}\right\}  \subseteq \mathbb{T} \quad \text{and} \quad 
S  = \{0\}\cup \left\{\left. \pm \dfrac 1 {b_n} \right|  n \in 
\mathbb{N}\right\}\subseteq \mathbb{R}
\end{align}
to be quasi-convex in $\mathbb{T}$ and $\mathbb{R}$, respectively.
We put $q_n=\dfrac{b_{n+1}}{b_n}$ for each $n \in \mathbb{N}$.

\begin{theorem} \label{thm:nec:specR}
Suppose that $S$ is quasi-convex in $\mathbb{R}$. Then:

\begin{myalphlist}

\item
$|\{n \in \mathbb{N} \mid q_n=2 \}| \leq 1$;

\item
if $q_n=2$ for some $n\in \mathbb{N}$, then $q_{n+1}> 4$.

\end{myalphlist}
\end{theorem}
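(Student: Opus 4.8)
The plan is to prove the contrapositive of each part: assuming one of the gap conditions fails, I will exhibit a single point $x_0\in\mathbb R\setminus S$ that lies in the bipolar $S^{\triangleright\triangleleft}$, contradicting quasi-convexity. Throughout I identify $\widehat{\mathbb R}$ with $\mathbb R$ via $y\mapsto(x\mapsto\pi(xy))$, and write $\|t\|$ for the distance from $t\in\mathbb R$ to the nearest integer, so that $\pi(t)\in\mathbb T_+$ if and only if $\|t\|\le\tfrac14$. With this notation $S^\triangleright=\{y\in\mathbb R\mid \|y/b_n\|\le\tfrac14\text{ for all }n\}$, and to certify $x_0\in S^{\triangleright\triangleleft}$ it suffices to show $\|x_0y\|\le\tfrac14$ for every $y\in S^\triangleright$. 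The computational fact I will use repeatedly is the elementary \emph{doubling estimate}: if $\|v\|\le\tfrac14$ then $\|2v\|=2\|v\|$, so $\|2v\|\le\tfrac14$ forces $\|v\|\le\tfrac18$. Applied to $v=y/b_{k+1}$ when $q_k=2$ (so that $y/b_k=2\,y/b_{k+1}$), this gives the \emph{pinching lemma}: for every $y\in S^\triangleright$ and every index $k$ with $q_k=2$ one has $\|y/b_{k+1}\|\le\tfrac18$.

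For part~(a), suppose $q_m=q_n=2$ with $m<n$, and set $x_0=\dfrac1{b_{m+1}}+\dfrac1{b_{n+1}}$. Since $q_m=2$ gives $\tfrac1{b_m}-\tfrac1{b_{m+1}}=\tfrac1{b_{m+1}}>\tfrac1{b_{n+1}}$, the point $x_0$ lies strictly between the consecutive grid values $\tfrac1{b_{m+1}}$ and $\tfrac1{b_m}$, and therefore $x_0\notin S$. To see $x_0\in S^{\triangleright\triangleleft}$, take any $y\in S^\triangleright$ and apply the pinching lemma at both indices $m$ and $n$: then $\|x_0y\|\le\|y/b_{m+1}\|+\|y/b_{n+1}\|\le\tfrac18+\tfrac18=\tfrac14$. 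Hence $S$ is not quasi-convex.

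For part~(b), suppose $q_n=2$ but $q_{n+1}\le4$, so $q_{n+1}\in\{2,3,4\}$, and set $x_0=\dfrac1{b_{n+1}}+\dfrac1{b_{n+2}}$; exactly as above, $q_n=2$ places $x_0$ strictly between $\tfrac1{b_{n+1}}$ and $\tfrac1{b_n}$, so $x_0\notin S$. Here a single pinch is not enough, and I instead use the three constraints at levels $n$, $n+1$, $n+2$ simultaneously. Writing $\beta=y/b_{n+2}$, one has $y/b_{n+1}=q_{n+1}\beta$ and $y/b_n=2q_{n+1}\beta$, so membership $y\in S^\triangleright$ yields $\|\beta\|\le\tfrac14$, $\|q_{n+1}\beta\|\le\tfrac14$, and — by the doubling estimate applied to $v=q_{n+1}\beta$ — even $\|q_{n+1}\beta\|\le\tfrac18$; moreover $x_0y=(q_{n+1}+1)\beta$. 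It then remains to verify that $\|(q+1)\beta\|\le\tfrac14$ whenever $\|\beta\|\le\tfrac14$ and $\|q\beta\|\le\tfrac18$, for each $q\in\{2,3,4\}$. This is a finite one-variable extremal check: the bound holds in all three cases, and the value $\tfrac14$ is attained only when $q=4$ and $\beta\equiv\tfrac14$. This again forces $x_0\in S^{\triangleright\triangleleft}\setminus S$.

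I expect the main obstacle to be precisely the estimate in part~(b). The crude pinch $\|y/b_{n+1}\|\le\tfrac18$ combined with $\|y/b_{n+2}\|\le\tfrac14$ only yields $\|x_0y\|\le\tfrac38$, so one cannot argue term by term and must exploit the three levels $n,n+1,n+2$ jointly; the threshold $q_{n+1}=4$ is exactly where the extremal value reaches $\tfrac14$, which is what pins down the constant $4$ in the statement. Part~(a), by contrast, becomes routine once the pinching lemma is in hand, since two gaps of~$2$ contribute two independent factors of $\tfrac18$ that sum to exactly $\tfrac14$. The only other care needed is checking that each chosen $x_0$ avoids $S$, and in both cases this reduces to the single observation that a gap $q_k=2$ makes $\tfrac1{b_{k+1}}$ and $\tfrac1{b_k}$ adjacent grid points with no point of $S$ strictly between them.
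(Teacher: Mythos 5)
Your proof is correct, and at its core it is the paper's own argument run in contrapositive form: your witnesses are exactly the elements the paper places in the quasi-convex hull, namely $\tfrac{1}{b_{m+1}}+\tfrac{1}{b_{n+1}}$ (the paper's $h_1+h_2$ from Lemma~\ref{lemma:nec:h12}(a)) and $\tfrac{1}{b_{n+1}}+\tfrac{1}{b_{n+2}}=(q_{n+1}+1)h$ (the paper's Lemma~\ref{lemma:nec:h12}(b),(c)), and your pinching lemma and extremal check are the same polar computations in a different dress. The packaging does differ in two ways worth recording. First, the paper proves its hull relations abstractly, in an arbitrary abelian topological group, and rules out the witness landing back in $S$ by a small number-theoretic lemma (Lemma~\ref{lemma:nec:b12}: $\tfrac1a+\tfrac1b=1$ forces $a=b=2$); you instead work concretely in $\widehat{\mathbb{R}}\cong\mathbb{R}$ with the norm $\|\cdot\|$, and dispose of non-membership by the simpler observation that $q_k=2$ makes $\tfrac1{b_{k+1}}$ and $\tfrac1{b_k}$ adjacent points of $S$. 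Second, you treat $q_{n+1}\in\{2,3,4\}$ uniformly through one extremal check, whereas the paper handles $q_{n+1}=2$ by citing part (a) and $q_{n+1}\in\{3,4\}$ by two separate hull lemmas; your route is a bit more economical, the paper's is stated in greater generality. The one thing you must still supply is the extremal check itself: you assert it but do not verify it, and it is precisely where the constant $4$ in the statement gets pinned down. It is true---for $q=2$ the constraints force $\|\beta\|\le\tfrac1{16}$, for $q=3$ they force $\|\beta\|\le\tfrac1{24}$, and for $q=4$ they confine $\beta$ modulo $1$ to $[-\tfrac1{32},\tfrac1{32}]\cup[\tfrac7{32},\tfrac14]\cup[-\tfrac14,-\tfrac7{32}]$, whence $\|(q+1)\beta\|\le\tfrac14$ in all three cases---and it matches the paper's explicit computations $\{1,3,6\}^\triangleright=\mathbb{T}_6$ and $\{1,4,8\}^\triangleright\subseteq\mathbb{T}_8\cup(\tfrac{15}{64}+\mathbb{T}_{16})\cup(-\tfrac{15}{64}+\mathbb{T}_{16})$, so the omission is routine to fill rather than a genuine gap.
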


For the sake of transparency, we break up the proof of 
Theorem~\ref{thm:nec:specR} into two lemmas, the first of 
which holds in every abelian topological group.

\begin{lemma} \label{lemma:nec:h12}
Let $G$ be an abelian topological group, and $h,h_1,h_2 \in G$. Then:
\begin{myalphlist}

\item
$h_1 \pm h_2 \in Q_G(\{h_1,2h_1,h_2,2h_2\})$;

\item
$4h \in  Q_G(\{h,3h,6h\})$;

\item
$5h \in  Q_G(\{h,4h,8h\})$.

\end{myalphlist}
\end{lemma}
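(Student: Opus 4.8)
The plan is to unfold the definition of the quasi-convex hull. Since $Q_G(E) = E^{\triangleright\triangleleft}$, to prove that an element $x$ lies in $Q_G(E)$ it suffices to fix an arbitrary $\chi \in E^\triangleright$ --- that is, a character with $\chi(E) \subseteq \mathbb{T}_+$ --- and to verify that $\chi(x) \in \mathbb{T}_+$ as well. Recall that $\mathbb{T}_+ = \pi([-\tfrac14,\tfrac14])$ and that $\pi$ restricts to a bijection of $[-\tfrac14,\tfrac14]$ onto $\mathbb{T}_+$; hence, whenever $\chi(h) \in \mathbb{T}_+$, there is a unique real lift $s \in [-\tfrac14,\tfrac14]$ with $\chi(h) = \pi(s)$, and then $\chi(kh) = \pi(ks)$ for every integer $k$. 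All three parts reduce to tracking these real lifts modulo $1$, using that $\pi(t) \in \mathbb{T}_+$ is precisely the condition that $t$ lies within $\tfrac14$ of an integer.

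For part (a), the crucial observation is a ``doubling'' constraint: if $\chi(h_i) = \pi(s_i)$ with $s_i \in [-\tfrac14,\tfrac14]$ and also $\chi(2h_i) = \pi(2s_i) \in \mathbb{T}_+$, then since $2s_i \in [-\tfrac12,\tfrac12]$ this membership forces $2s_i \in [-\tfrac14,\tfrac14]$, i.e.\ $s_i \in [-\tfrac18,\tfrac18]$. Applying this for $i = 1, 2$ to a character $\chi \in \{h_1,2h_1,h_2,2h_2\}^\triangleright$, both lifts satisfy $s_1, s_2 \in [-\tfrac18,\tfrac18]$, whence $s_1 \pm s_2 \in [-\tfrac14,\tfrac14]$ and so $\chi(h_1 \pm h_2) = \pi(s_1 \pm s_2) \in \mathbb{T}_+$. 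This yields $h_1 \pm h_2 \in Q_G(\{h_1,2h_1,h_2,2h_2\})$.

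For parts (b) and (c), I would first use the symmetry $\chi \leftrightarrow -\chi$ (which preserves the polar and replaces $s$ by $-s$) to assume that the lift of $\chi(h)$ satisfies $s \in [0,\tfrac14]$. The heart of the argument is then to pin down the \emph{exact} set of admissible $s$ --- those for which every generator of $E$ is sent into $\mathbb{T}_+$ --- by intersecting the mod-$1$ conditions coming from each generator. For (b), the constraints $\pi(3s), \pi(6s) \in \mathbb{T}_+$ cut $[0,\tfrac14]$ down to the single subinterval $[0,\tfrac1{24}]$, on which $4s \in [0,\tfrac16] \subseteq [-\tfrac14,\tfrac14]$, so $\chi(4h) \in \mathbb{T}_+$. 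For (c), intersecting the constraints $\pi(4s), \pi(8s) \in \mathbb{T}_+$ leaves exactly $[0,\tfrac1{32}] \cup [\tfrac7{32},\tfrac14]$, and on each piece one checks that $5s \bmod 1 \in [-\tfrac14,\tfrac14]$ (directly on the first piece, and after subtracting $1$ on the second), so $\chi(5h) \in \mathbb{T}_+$.

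The main obstacle is the case analysis in (b) and (c): it is elementary but must handle the wrap-around carefully, since $3s$, $6s$, $4s$, and $8s$ can exceed $1$ (indeed $8s$ can approach $2$), so that each multiple may be near a different integer. Keeping track of which integer each multiple is closest to --- particularly near the endpoint $s = \tfrac14$, where several constraints become tight simultaneously --- is where the bookkeeping is most delicate, and it is exactly this interplay that forces the admissible set of lifts to be so small and thereby confines the target multiple to $\mathbb{T}_+$.
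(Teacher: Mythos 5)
Your proposal is correct and takes essentially the same approach as the paper: fix an arbitrary $\chi$ in the polar, pin down the admissible values of $\chi(h)$ from the generators' constraints, and check that the target multiple lands in $\mathbb{T}_+$ --- your admissible sets $[-\tfrac18,\tfrac18]$, $[0,\tfrac1{24}]$, and $[0,\tfrac1{32}]\cup[\tfrac7{32},\tfrac14]$ are exactly the paper's $\mathbb{T}_2$, $\mathbb{T}_6$, and $\mathbb{T}_8\cup(\tfrac{15}{64}+\mathbb{T}_{16})\cup(-\tfrac{15}{64}+\mathbb{T}_{16})$. The only difference is notational: you track real lifts in $[-\tfrac14,\tfrac14]$, while the paper computes directly in $\mathbb{T}$ using the sets $\mathbb{T}_m$.
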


\begin{proof}
(a) Let $\chi \in \{h_1,2h_1,h_2,2h_2\}^\triangleright$.
Then $\chi(h_1)\in \mathbb{T}_+$ and 
$2\chi(h_1)=\chi(2h_1) \in \mathbb{T}_+$, and so
$\chi(h_1) \in \mathbb{T}_2$. Similarly, $\chi(h_2) \in \mathbb{T}_2$.
Thus, 
\begin{align}
\chi(h_1\pm h_2)=\chi(h_1)\pm\chi(h_2) \in \mathbb{T}_2 + \mathbb{T}_2 
=\mathbb{T}_+.
\end{align}
Hence, $h_1\pm h_2 \in  Q_G(\{h_1,2h_1,h_2,2h_2\})$.

(b) Let $\chi \in \{h,3h,6h\}^\triangleright$.
Then $\chi(h)\in \mathbb{T}_+$, 
$3\chi(h) = \chi(3h) \in \mathbb{T}_+$, and
$6\chi(h) = \chi(6h) \in \mathbb{T}_+$.
Thus, $\chi(h)\in\{1,3,6\}^\triangleright = \mathbb{T}_6$, and so
$\chi(4h)=4\chi(h) \in 4\mathbb{T}_6 \subseteq \mathbb{T}_+$.
Hence, $4h \in Q_G( \{h,3h,6h\})$.

(c) Let $\chi \in \{h,4h,8h\}^\triangleright$.  
Then $\chi(h)\in \mathbb{T}_+$,
$4\chi(h) = \chi(4h) \in \mathbb{T}_+$, and
$8\chi(h) = \chi(8h) \in \mathbb{T}_+$. Thus,
\begin{align}
\chi(h) \in \{1,4,8\}^\triangleright \in \mathbb{T}_8 \cup 
(\tfrac {15}{64} + \mathbb{T}_{16}) \cup 
(-\tfrac {15}{64} + \mathbb{T}_{16}).
\end{align}
Therefore,
\begin{align}
\chi(5h)=5\chi(h) \in 
5\mathbb{T}_8 \cup  
(\tfrac {11}{64} + 5\mathbb{T}_{16}) \cup
(-\tfrac {11}{64} + 5\mathbb{T}_{16}) \subseteq \mathbb{T}_+. 
\end{align}
Hence, $5h \in Q_G(\{h,4h,8h\})$, as desired. 
\end{proof}

\begin{lemma} \label{lemma:nec:b12}
If $\dfrac{1}{b_{n_1}} + \dfrac{1}{b_{n_2}} \in S$, then
$n_1=n_2$.
\end{lemma}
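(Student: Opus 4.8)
The plan is to exploit the divisibility chain $b_n \mid b_{n+1}$ together with the fact that $S \subseteq \mathbb{R}$ consists of genuine real numbers, so that membership "$\in S$" forces an exact rational identity rather than a mere congruence modulo $\mathbb{Z}$. By symmetry I may assume $n_1 \le n_2$, and it suffices to rule out the strict inequality $n_1 < n_2$, since $n_1 = n_2$ is exactly the desired conclusion. The first observation is that $\tfrac{1}{b_{n_1}} + \tfrac{1}{b_{n_2}}$ is strictly positive, and the only positive elements of $S$ are the numbers $\tfrac{1}{b_m}$; hence the hypothesis yields an index $m \in \mathbb{N}$ with $\tfrac{1}{b_m} = \tfrac{1}{b_{n_1}} + \tfrac{1}{b_{n_2}}$.

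Next I would carry out the key bookkeeping step. Since the right-hand side strictly exceeds $\tfrac{1}{b_{n_1}}$, we get $b_m < b_{n_1}$, and as the sequence $\{b_n\}$ is strictly increasing this forces $m < n_1$. Consequently $b_m \mid b_{n_1}$, so $\tfrac{b_{n_1}}{b_m}$ is a positive integer; in fact $\tfrac{b_{n_1}}{b_m} \ge 2$, because the chain of quotients $q_i$ running from index $m$ to index $n_1$ is nonempty and each factor satisfies $q_i \ge 2$.

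Finally I would multiply the identity through by $b_{n_1}$ to obtain
\[
\frac{b_{n_1}}{b_m} = 1 + \frac{b_{n_1}}{b_{n_2}}.
\]
Under the assumption $n_1 < n_2$ we have $b_{n_1} \mid b_{n_2}$ with $\tfrac{b_{n_2}}{b_{n_1}} \ge 2$, so $0 < \tfrac{b_{n_1}}{b_{n_2}} \le \tfrac{1}{2}$, and therefore $1 < \tfrac{b_{n_1}}{b_m} < 2$. This contradicts the integrality of $\tfrac{b_{n_1}}{b_m}$ established in the previous step, and the contradiction forces $n_1 = n_2$.

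The argument is short, and the only place demanding care is the ordering-and-divisibility bookkeeping: I must confirm the \emph{strict} inequality $m < n_1$ (not merely $m \le n_1$) so that $b_m$ genuinely divides $b_{n_1}$, and I must track that every quotient $q_i$ is at least $2$ so that the trapping inequality $1 < \tfrac{b_{n_1}}{b_m} < 2$ is strict on both sides. Beyond this I do not anticipate a substantive obstacle.
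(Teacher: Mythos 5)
Your proof is correct and takes essentially the same route as the paper's: positivity and strict monotonicity force an exact identity $\tfrac{1}{b_m} = \tfrac{1}{b_{n_1}} + \tfrac{1}{b_{n_2}}$ with $m$ strictly below $n_1, n_2$, and the divisibility chain then turns the identity into integer arithmetic. The only (cosmetic) difference is the finish: the paper multiplies through by $b_m$ and solves $\tfrac{1}{a}+\tfrac{1}{b}=1$ in positive integers symmetrically (forcing $a=b=2$), whereas you normalize asymmetrically by $b_{n_1}$ and trap the integer $b_{n_1}/b_m$ strictly between $1$ and $2$ to reach a contradiction.
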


\begin{proof}
Suppose that $\frac{1}{b_{n_1}} + \frac{1}{b_{n_2}}\in S$, and so 
there is  $n_0 \in \mathbb{N}$ such that
\begin{align} \label{eq:nec:b12}
\frac{1}{b_{n_1}} + \frac{1}{b_{n_2}} = \frac{1}{b_{n_0}}.
\end{align}
Since the sequence $\{b_n\}_{n=0}^\infty$ is increasing,
the sequence $\{\frac 1 {b_n} \}_{n=0}^\infty$ is decreasing.
Thus, $n_0 < n_1, n_2$, because
$\frac{1}{b_{n_0}} > \frac{1}{b_{n_1}}, \frac{1}{b_{n_2}}$. Consequently,
$b_{n_0} \mid b_{n_1}, b_{n_2}$, and for $a=\frac{b_{n_1}}{b_{n_0}}$
and $b=\frac{b_{n_2}}{b_{n_0}}$, equation (\ref{eq:nec:b12}) can be 
rewritten as
\begin{align}
\frac 1 a + \frac 1 b =1,
\end{align}
where $a,b \in \mathbb{N}$. Therefore, $a=b=2$, which implies that
$b_{n_1}=b_{n_2}$. Hence, $n_1=n_2$.
\end{proof}

\begin{proof}[Proof of Theorem~\ref{thm:nec:specR}]
(a) Suppose that $q_{n_1}=q_{n_2}=2$. Put 
$h_1 =\frac{1}{b_{n_1+1}}$ and $h_2 =\frac{1}{b_{n_2+1}}$.
Then $2h_1 =\frac{1}{b_{n_1}}$ and 
$2h_2 =\frac{1}{b_{n_2}}$. Thus,
$\{h_1,2h_1,h_2,2h_2\} \subseteq S$, and so by 
Lemma~\ref{lemma:nec:h12}(a),
\begin{align}
h_1 + h_2 \in Q_{\mathbb{T}}(\{h_1,2h_1,h_2,2h_2\})
\subseteq Q_{\mathbb{T}}(S).
\end{align}
Consequently, 
$\frac{1}{b_{n_1+1}}+ \frac{1}{b_{n_2+1}}=h_1+h_2 \in S$, because $S$ is 
quasi-convex. Therefore, by Lemma~\ref{lemma:nec:b12},
$n_1+1=n_2+1$. Hence, $n_1=n_2$, as desired.

(b) Suppose that $q_n=2$.  For $h=\frac{1}{b_{n+2}}$, one has
$q_{n+1} h =\frac{1}{b_{n+1}}$ and $2q_{n+1} h =\frac{1}{b_{n}}$.
Thus, $\{h,q_{n+1}h,2q_{n+1}h\} \subseteq S$, and since $S$ is 
quasi-convex,
$Q_{\mathbb{T}}(\{h,q_{n+1}h,2q_{n+1}h\}) \subseteq S$.
If $q_{n+1}=3$,  then by Lemma~\ref{lemma:nec:h12}(b),
\begin{align}
(q_{n+1}+1)h = 4h \in  Q_{\mathbb{T}}(\{h,3h,6h\})=
Q_{\mathbb{T}}(\{h,q_{n+1}h,2q_{n+1}h\}) \subseteq S.
\end{align}
If $q_{n+1}=4$, then by Lemma~\ref{lemma:nec:h12}(c), 
\begin{align}
(q_{n+1}+1)h = 5h \in  Q_{\mathbb{T}}(\{h,4h,8h\})=
Q_{\mathbb{T}}(\{h,q_{n+1}h,2q_{n+1}h\}) \subseteq S.
\end{align}
In either case, 
\begin{align}
\frac{1}{b_{n+1}} + \frac{1}{b_{n+2}}=
q_{n+1}h + h =
(q_{n+1}+1)h \in S.
\end{align}
By Lemma~\ref{lemma:nec:b12}, this implies that $n+1=n+2$. This 
contradiction shows that $q_{n+1}\neq 3$ and $q_{n+1}\neq 4$.
Finally, we note that by (a), $q_{n+1} \neq 2$. Hence,
$q_{n+1} > 4$, as desired.
\end{proof}

In order to prove the analogue of Theorem~\ref{thm:nec:specR} for 
quasi-convexity of $X$ in $\mathbb{T}$, we first establish a general 
result relating quasi-convexity of subsets of $\mathbb{R}$ to 
quasi-convexity of their projections in~$\mathbb{T}$.


\begin{remark} \label{rem:nec:pi}
The map $\pi_{|(-\frac 1 2,\frac 1 2)}\colon (-\frac 1 2 ,\frac 1 2) 
\rightarrow \mathbb{T}\backslash\{\frac 1 2\}$ is a homeomorphism. Indeed, 
it is continuous and open because $\pi$ is so, and it is clearly 
surjective, so it remains to be seen that it is injective. Suppose that
$\pi(y_1)=\pi (y_2)$, where $y_1,y_2 \in (-\frac 1 2, \frac 1 2)$.
Then there is $n \in \mathbb{Z}$ such that $y_1 = y_2 + n$, and so
$y_1 - y_2 = n$. Consequently, 
$n \in (-\frac 1 2, \frac 1 2) + (-\frac 1 2, \frac 1 2) = (-1, 1)$.
Therefore, $n \in \mathbb{Z} \cap (-1,1) = \{0\}$. Hence,
$y_1 = y_2$, as required.
\end{remark}

\begin{theorem} \label{thm:nec:pi-Y}
Let $Y\subseteq (-\frac 1 2, \frac 1 2)$ be a compact subset of 
$\mathbb{R}$. If the projection $\pi(Y)$ is quasi-convex in~$\mathbb{T}$, 
then $Y$ is quasi-convex in $\mathbb{R}$.
\end{theorem}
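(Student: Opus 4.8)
The plan is to establish the nontrivial inclusion $Q_{\mathbb{R}}(Y)\subseteq Y$, since $Y\subseteq Q_{\mathbb{R}}(Y)$ holds automatically. I would combine a \emph{functorial} push-forward of the quasi-convex hull along $\pi$ with a \emph{metric} bound confining $Q_{\mathbb{R}}(Y)$ to $[-\tfrac12,\tfrac12]$. Throughout I identify $\widehat{\mathbb{R}}$ with $\mathbb{R}$ by letting $s\in\mathbb{R}$ act as the character $\chi_s(x)=\pi(sx)$, and recall that $\mathbb{T}_+=\pi([-\tfrac14,\tfrac14])$.

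First I would apply Proposition~\ref{prop:intro:qc-hom} to the continuous homomorphism $\pi\colon\mathbb{R}\to\mathbb{T}$, obtaining $\pi(Q_{\mathbb{R}}(Y))\subseteq Q_{\mathbb{T}}(\pi(Y))$. By hypothesis $\pi(Y)$ is quasi-convex, so $Q_{\mathbb{T}}(\pi(Y))=\pi(Y)$, and hence every $x\in Q_{\mathbb{R}}(Y)$ satisfies $\pi(x)\in\pi(Y)$. Since $Y\subseteq(-\tfrac12,\tfrac12)$, Remark~\ref{rem:nec:pi} gives $\pi(Y)\subseteq\mathbb{T}\setminus\{\tfrac12\}$, so such an $x$ is never congruent to $\tfrac12$ modulo $\mathbb{Z}$; in particular $x\neq\pm\tfrac12$.

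The technical core --- and the step I expect to be the main obstacle --- is to prove $Q_{\mathbb{R}}(Y)\subseteq[-\tfrac12,\tfrac12]$ by exhibiting separating characters of $\mathbb{R}$. By compactness, $r:=\max_{y\in Y}|y|<\tfrac12$. For every $s$ with $0\le s\le \tfrac{1}{4r}$ one has $\chi_s(Y)=\pi(sY)\subseteq\pi([-\tfrac14,\tfrac14])=\mathbb{T}_+$, so all these dilations lie in $Y^{\triangleright}$. Now fix $x$ with $|x|>\tfrac12>r$. The map $s\mapsto sx$ is continuous on $[0,\tfrac1{4r}]$, starts at $0$, and ends at $\tfrac{x}{4r}$, whose absolute value exceeds $\tfrac14$ because $|x|>r$; an intermediate-value argument then yields some $s^{*}\in[0,\tfrac1{4r}]$ with $s^{*}x\in(\tfrac14,\tfrac34)\pmod 1$. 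For this $s^{*}$ we have $\chi_{s^{*}}\in Y^{\triangleright}$ yet $\chi_{s^{*}}(x)=\pi(s^{*}x)\notin\mathbb{T}_+$, so $x\notin Y^{\triangleright\triangleleft}=Q_{\mathbb{R}}(Y)$. (The case $x<-\tfrac12$ is symmetric, using $-s^{*}$.) The delicate point is to guarantee that $s^{*}x$ can be steered strictly into $(\tfrac14,\tfrac34)$ while $s^{*}$ stays within the admissible range $[0,\tfrac1{4r}]$; this is exactly where the strict inequality $r<\tfrac12$ enters.

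Finally I would assemble the two parts. Given $x\in Q_{\mathbb{R}}(Y)$, the bounding step forces $x\in[-\tfrac12,\tfrac12]$, while the projection step forbids $x=\pm\tfrac12$, so $x\in(-\tfrac12,\tfrac12)$; moreover $\pi(x)=\pi(y)$ for some $y\in Y\subseteq(-\tfrac12,\tfrac12)$. Since $\pi$ restricted to $(-\tfrac12,\tfrac12)$ is injective by Remark~\ref{rem:nec:pi}, this gives $x=y\in Y$, completing the proof that $Q_{\mathbb{R}}(Y)\subseteq Y$ and hence that $Y$ is quasi-convex in $\mathbb{R}$.
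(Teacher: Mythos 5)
Your proof is correct, and it follows the same three-step skeleton as the paper's own argument: (1) push the hull forward along $\pi$ via Proposition~\ref{prop:intro:qc-hom} and use quasi-convexity of $\pi(Y)$ to get $Q_{\mathbb{R}}(Y)\subseteq\pi^{-1}(\pi(Y))$; (2) confine $Q_{\mathbb{R}}(Y)$ to a region on which $\pi$ is injective; (3) conclude by the injectivity statement of Remark~\ref{rem:nec:pi}. The genuine divergence is in step (2): the paper picks $M<\tfrac12$ with $Y\subseteq[-M,M]$ and simply \emph{cites} the fact that $[-M,M]$ is quasi-convex in $\mathbb{R}$ (Bruguera's thesis), which yields $Q_{\mathbb{R}}(Y)\subseteq[-M,M]\subseteq(-\tfrac12,\tfrac12)$ in one stroke; you instead prove the required separation from first principles, noting that every dilation character $\chi_s$ with $0\le s\le\tfrac1{4r}$ lies in $Y^{\triangleright}$ and steering $s^{*}x$ into $(\tfrac14,\tfrac34)$ modulo $1$ by the intermediate value theorem. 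Your version is self-contained --- in effect it reproves the quasi-convexity of the interval $[-r,r]$, since your argument only uses $|x|>r$ --- at the cost of an extra step: because your bound is the closed interval $[-\tfrac12,\tfrac12]$, you must separately rule out $x=\pm\tfrac12$ via $\pi(x)\in\pi(Y)$, which the paper's strict inequality $M<\tfrac12$ renders unnecessary. One small caveat to patch: if $Y=\{0\}$ (allowed by the hypotheses, since $\{0\}$ projects to a quasi-convex subset of $\mathbb{T}$), then $r=\max_{y\in Y}|y|=0$ and $\tfrac1{4r}$ is undefined; the fix is to choose any $r$ with $\max_{y\in Y}|y|<r<\tfrac12$ instead of the maximum itself, after which your argument goes through verbatim.
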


\begin{proof}
Since $\pi(Y)$ is quasi-convex in $\mathbb{T}$ and $\pi$ is a continuous 
group homomorphism, by Proposition~\ref{prop:intro:qc-hom}, 
\mbox{$Q_\mathbb{R}(Y) \subseteq \pi^{-1}(\pi(Y))$}.
On the other hand, there 
is $0<M < \frac 1 2$ such 
that $Y \subseteq [-M,M]$, because $Y$ is compact. Thus,
$Q_\mathbb{R}(Y) \subseteq [-M,M]$, as
$[-M,M]$ is quasi-convex (cf. \cite[p.~79]{BruPhD}). 
Therefore, by Remark~\ref{rem:nec:pi}, 
$\pi^{-1}(\pi(Y)) \cap (-\tfrac 1 2, \tfrac 1 2) = Y$, and so
\begin{align}
Q_\mathbb{R}(Y) \subseteq  \pi^{-1}(\pi(Y)) \cap [-M,M] \subseteq
\pi^{-1}(\pi(Y)) \cap (-\tfrac 1 2, \tfrac 1 2) = Y.
\end{align}
Hence, $Y$ is quasi-convex in $\mathbb{R}$, as desired.
\end{proof}

\begin{corollary} \label{cor:nec:pi-Y}
Let $Y\subseteq \mathbb{R}$ be a compact subset. If there is $\alpha \neq 0$ 
such that \mbox{$\alpha Y \subseteq (-\frac 1 2, \frac 1 2)$} and 
$\pi(\alpha Y)$ is quasi-convex in $\mathbb{T}$, then $Y$ is quasi-convex 
in $\mathbb{R}$.
\end{corollary}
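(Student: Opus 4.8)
The plan is to reduce the corollary directly to Theorem~\ref{thm:nec:pi-Y} by exploiting the fact that multiplication by $\alpha$ is a topological automorphism of $\mathbb{R}$, together with the observation that quasi-convexity is invariant under topological group automorphisms. First I would consider the map $m_\alpha\colon \mathbb{R} \to \mathbb{R}$ given by $m_\alpha(x) = \alpha x$. Since $\alpha \neq 0$, this is a continuous group isomorphism of $\mathbb{R}$ onto itself, whose inverse $m_{\alpha^{-1}}$ is also a continuous group homomorphism.

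The key step is to verify that $m_\alpha$ transports the quasi-convex hull both ways, so that $Y$ is quasi-convex in $\mathbb{R}$ if and only if $\alpha Y$ is. Applying Proposition~\ref{prop:intro:qc-hom} to the homomorphism $m_\alpha$ and the set $Y$ gives $\alpha\, Q_\mathbb{R}(Y) = m_\alpha(Q_\mathbb{R}(Y)) \subseteq Q_\mathbb{R}(m_\alpha(Y)) = Q_\mathbb{R}(\alpha Y)$. Applying the same proposition to $m_{\alpha^{-1}}$ and the set $\alpha Y$ yields $\alpha^{-1} Q_\mathbb{R}(\alpha Y) \subseteq Q_\mathbb{R}(\alpha^{-1}(\alpha Y)) = Q_\mathbb{R}(Y)$, that is, $Q_\mathbb{R}(\alpha Y) \subseteq \alpha\, Q_\mathbb{R}(Y)$. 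Combining the two inclusions gives $Q_\mathbb{R}(\alpha Y) = \alpha\, Q_\mathbb{R}(Y)$, from which it follows that $Q_\mathbb{R}(Y) = Y$ precisely when $Q_\mathbb{R}(\alpha Y) = \alpha Y$.

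With this equivalence in hand, the conclusion is immediate. Since $Y$ is compact and $m_\alpha$ is continuous, $\alpha Y$ is a compact subset of $\mathbb{R}$, and by hypothesis $\alpha Y \subseteq (-\tfrac 1 2, \tfrac 1 2)$ with $\pi(\alpha Y)$ quasi-convex in $\mathbb{T}$. Theorem~\ref{thm:nec:pi-Y}, applied to the set $\alpha Y$, then shows that $\alpha Y$ is quasi-convex in $\mathbb{R}$. By the equivalence established in the previous step, $Y$ is quasi-convex in $\mathbb{R}$, as desired.

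There is no genuine obstacle here; the only point requiring care is the two-sided transfer of quasi-convexity under $m_\alpha$, and this is handled cleanly by invoking Proposition~\ref{prop:intro:qc-hom} once for $m_\alpha$ and once for its inverse $m_{\alpha^{-1}}$. Once that automorphism-invariance is recorded, the corollary is a one-line consequence of Theorem~\ref{thm:nec:pi-Y}.
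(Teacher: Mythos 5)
Your proof is correct and takes essentially the same route as the paper: both apply Theorem~\ref{thm:nec:pi-Y} to the set $\alpha Y$ and then transfer quasi-convexity back to $Y$ through the topological automorphism $x \mapsto \alpha x$ of $\mathbb{R}$. The only difference is cosmetic --- the paper simply cites the fact that quasi-convexity is preserved by topological isomorphisms, whereas you spell this out by applying Proposition~\ref{prop:intro:qc-hom} once to $m_\alpha$ and once to $m_{\alpha^{-1}}$.
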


\begin{proof}
By Theorem~\ref{thm:nec:pi-Y}, the set $\alpha Y$  is quasi-convex in 
$\mathbb{R}$. The map $f\colon \mathbb{R}\rightarrow \mathbb{R}$ defined by 
\mbox{$f(y)=\alpha y$}
is a topological automorphism of $\mathbb{R}$. 
Consequently, \mbox{$Y=f^{-1}(\alpha Y)$}
is quasi-convex in~$\mathbb{R}$, 
because quasi-convexity is preserved by topological isomorphisms.
\end{proof}

\begin{theorem} \label{thm:nec:gen}
Suppose that $X$ is quasi-convex in $\mathbb{T}$. Then:
\begin{myalphlist}

\item
$b_0 \geq 4$;

\item
$|\{n \in \mathbb{N} \mid q_n=2 \}| \leq 1$;

\item
if $q_n=2$ for some $n\in \mathbb{N}$, then $q_{n+1}> 4$.

\end{myalphlist}
\end{theorem}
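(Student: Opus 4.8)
The plan is to dispatch conditions (b) and (c) by transporting them to the real line, where Theorem~\ref{thm:nec:specR} has already proved their analogues, and to prove the genuinely new condition (a) by a direct argument that exploits the fact that $\mathbb{T}_+=\pi([-\tfrac14,\tfrac14])$ has ``radius'' exactly $\tfrac14$.

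First I would treat (a), which I expect to be the crux. Arguing by contradiction, suppose $b_0\in\{2,3\}$ (the only values with $1<b_0<4$). Identifying $\widehat{\mathbb{T}}$ with $\mathbb{Z}$ via $k\mapsto(t\mapsto kt)$, the definition of the polar shows that a character $k\in X^\triangleright$ must satisfy $\tfrac{k}{b_0}\in\mathbb{T}_+$. The key observation is that for $b_0\le 3$ one has $\tfrac1{b_0}\ge\tfrac13>\tfrac14$, so among the elements $\tfrac{j}{b_0}$ of the cyclic subgroup $H:=\tfrac1{b_0}\mathbb{Z}/\mathbb{Z}$ of order $b_0$, only $j\equiv 0$ lies in $\mathbb{T}_+$. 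Hence $\tfrac{k}{b_0}\in\mathbb{T}_+$ forces $b_0\mid k$, so $X^\triangleright\subseteq b_0\mathbb{Z}$, and every character in $X^\triangleright$ annihilates $H$.

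The second step of (a) turns this algebraic fact into a topological contradiction. Since $X=X^{\triangleright\triangleleft}$, for $x\in X$, $h\in H$ and $\chi\in X^\triangleright$ we have $\chi(x+h)=\chi(x)\in\mathbb{T}_+$, so $x+h\in X^{\triangleright\triangleleft}=X$; thus $X$ is invariant under translation by $H$. Taking the nonzero element $h=\tfrac1{b_0}\in H$, this gives $\tfrac1{b_n}+\tfrac1{b_0}\in X$ for every $n$. As $\tfrac1{b_n}\to 0$ in $\mathbb{T}$, these distinct points converge to $\tfrac1{b_0}\ne 0$, so $\tfrac1{b_0}$ is an accumulation point of $X$ other than $0$. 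This contradicts the obvious fact that $X$, being a sequence converging to $0$ together with its limit, has $0$ as its only accumulation point. Therefore $b_0\ge 4$.

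Finally, (b) and (c) follow almost for free once (a) is known. Put $S=\{0\}\cup\{\pm\tfrac1{b_n}\mid n\in\mathbb{N}\}\subseteq\mathbb{R}$, so that $\pi(S)=X$. Because $b_0\ge 4$ we have $\tfrac1{b_n}\le\tfrac14$, hence $S\subseteq(-\tfrac12,\tfrac12)$, and $S$ is compact. Since $\pi(S)=X$ is quasi-convex in $\mathbb{T}$, Theorem~\ref{thm:nec:pi-Y} yields that $S$ is quasi-convex in $\mathbb{R}$; as $S$ has exactly the form required by Theorem~\ref{thm:nec:specR}, with the same quotients $q_n$, its parts (a) and (b) give precisely conditions (b) and (c). The main obstacle is thus concentrated in (a): not the (very light) computations, but spotting that a sub-threshold value of $b_0$ forces $X^\triangleright$ to kill the nontrivial finite subgroup $H$, which is then incompatible with $X$ having a single accumulation point.
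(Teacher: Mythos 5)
Your proposal is correct and follows essentially the same route as the paper's own proof: for (a), the paper likewise shows $X^\triangleright\subseteq b_0\mathbb{Z}$ (citing the general fact that $H^\triangleright=H^\perp$ for a subgroup $H$, where you verify it by hand for $b_0\in\{2,3\}$) and then derives the contradiction from $\tfrac{1}{b_0}+X\subseteq Q_{\mathbb{T}}(X)$ via the same accumulation-point argument. Parts (b) and (c) are handled identically in both: since $b_0\geq 4$ forces $S\subseteq(-\tfrac12,\tfrac12)$, Theorem~\ref{thm:nec:pi-Y} transfers quasi-convexity to $S$ in $\mathbb{R}$, and Theorem~\ref{thm:nec:specR} finishes the job.
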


\begin{proof}
First, we show that \mbox{$b_0 \geq 4$}. 
Assume that \mbox{$b_0=2$} or \mbox{$b_0=3$}. (The 
case \mbox{$b_0=1$} is not possible, because $b_0 > 1$.) Then
$\langle \frac 1 {b_0} \rangle = \{0,\pm \frac 1 {b_0} \} \subseteq X$, 
and thus (using the fact that $H^\triangleright$ coincides with the 
annihilator $H^\perp$ for every subgroup $H$) one obtains that
\begin{align}
X^\triangleright \subseteq \langle \frac 1 {b_0} \rangle^\triangleright = 
\langle \frac 1 {b_0} \rangle^\perp= b_0 \mathbb{Z}. 
\end{align}
Consequently,
$\frac 1 {b_0} + X \subseteq X^{\triangleright\triangleleft} = 
Q_{\mathbb{T}}(X)$. Since the only accumulation point of $X$ is $0$, and 
the only accumulation point of $\frac 1 {b_0} + X$ is
$\frac 1 {b_0}$, it follows that $Q_{\mathbb{T}}(X) \not\subseteq X$, 
contrary to our assumption that $X$ is quasi-convex. This contradiction 
shows that $b_0 \geq 4$, and hence (a) holds.

Since $b_0 \geq 4$, the set $S$ satisfies that 
$S \subseteq [-\frac 1 4, \frac 1 4] \subseteq (-\frac 1 2, \frac 1 2)$, 
and $S$ is compact (because it is consists of zero and a sequence that 
converges to $0$). Clearly, $\pi(S)=X$, and by our assumption, it is 
quasi-convex.  So, by Theorem~\ref{thm:nec:pi-Y}, $S$ is quasi-convex in 
$\mathbb{R}$.  Thus, by Theorem~\ref{thm:nec:specR},
(b) and (c) hold, as desired.
\end{proof}

\begin{theorem} \label{thm:nec:div3}
If $X$ is quasi-convex in $\mathbb{T}$, then for every $n\in \mathbb{N}$,
\mbox{$4 \hspace{-2pt} \not\hspace{2.75pt}\mid   b_{n+1}$} implies that 
$q_n \neq 3$. In particular, if 
\mbox{$4 \hspace{-2pt} \not\hspace{2.75pt}\mid b_{n}$} 
for every $n \in\mathbb{N}$, then $q_n \neq 3$ for every 
$n \in\mathbb{N}$.
\end{theorem}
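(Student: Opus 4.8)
The plan is to argue by contradiction. Assume $X$ is quasi-convex, fix $n$ with $4 \nmid b_{n+1}$, and suppose toward a contradiction that $q_n = 3$. Writing $h = \tfrac{1}{b_{n+1}}$, the hypothesis $q_n = 3$ gives $3h = \tfrac{1}{b_n}$, so both $h$ and $3h$ lie in $X$. The element I will test is $z = 2h = \tfrac{2}{b_{n+1}}$: I want to show $z \in Q_{\mathbb{T}}(X)$ but $z \notin X$, which contradicts $X = Q_{\mathbb{T}}(X)$. By Theorem~\ref{thm:nec:gen}(a) I may assume $b_0 \geq 4$, hence $b_{n+1} = 3b_n \geq 12$; this keeps $z$ a small positive element and is used in the verifications below.

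For the inclusion $z \in Q_{\mathbb{T}}(X) = X^{\triangleright\triangleleft}$, I would take an arbitrary $\chi \in X^\triangleright$, write it via the identification $\widehat{\mathbb{T}} \cong \mathbb{Z}$ as $x \mapsto kx$ for some $k \in \mathbb{Z}$, and set $t = \chi(h) = \pi(k/b_{n+1})$. Since $h, \tfrac{1}{b_n} \in X$, membership $\chi \in X^\triangleright$ forces $t \in \mathbb{T}_+$ and $3t = \chi(\tfrac{1}{b_n}) \in \mathbb{T}_+$. A short inspection of these two constraints (recall $\mathbb{T}_+ = \pi([-\tfrac14,\tfrac14])$) shows that the cosets $t$ satisfying both are exactly those in $\pi([-\tfrac{1}{12}, \tfrac{1}{12}])$ together with the two boundary cosets $\pi(\tfrac14)$ and $\pi(-\tfrac14)$. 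The crucial step, which I expect to be the main obstacle, is to rule out these boundary cosets: if $\pi(k/b_{n+1}) = \pi(\tfrac14)$, then $4k \equiv b_{n+1} \pmod{4 b_{n+1}}$, and reducing this congruence modulo $4$ yields $4 \mid b_{n+1}$, contrary to hypothesis (the case $\pi(-\tfrac14)$ is symmetric). This is precisely where the divisibility assumption $4 \nmid b_{n+1}$ enters, and it is also why testing $2h$ rather than $4h$ is the right move: once $t \in \pi([-\tfrac{1}{12}, \tfrac{1}{12}])$, one gets $\chi(z) = 2t \in \pi([-\tfrac16, \tfrac16]) \subseteq \mathbb{T}_+$, whereas $4t$ would only land in $\pi([-\tfrac13,\tfrac13])$. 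Since $\chi \in X^\triangleright$ was arbitrary, $z \in X^{\triangleright\triangleleft} = Q_{\mathbb{T}}(X)$.

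It remains to check that $z = \tfrac{2}{b_{n+1}} \notin X$, which is elementary: $z$ is a positive element lying in $\pi([0,\tfrac16])$, so it is neither $0$ nor any $-\tfrac{1}{b_m} = \pi(1 - \tfrac{1}{b_m})$, whose representative exceeds $\tfrac34$; and $\tfrac{2}{b_{n+1}} = \tfrac{1}{b_m}$ would force $b_m = \tfrac{b_{n+1}}{2} = \tfrac{3 b_n}{2}$, a value strictly between the consecutive terms $b_n$ and $b_{n+1}$ (and equal to neither, as $q_n = 3 \neq 2$), which is impossible for an increasing sequence. Thus $z \in Q_{\mathbb{T}}(X) \setminus X$, contradicting quasi-convexity, so $q_n \neq 3$ whenever $4 \nmid b_{n+1}$. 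The ``in particular'' statement is then immediate: if $4 \nmid b_m$ for every $m$, then in particular $4 \nmid b_{n+1}$ for every $n$, whence $q_n \neq 3$ for every $n$.
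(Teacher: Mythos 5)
Your proof is correct and takes essentially the same route as the paper: both arguments test the element $2/b_{n+1}$, show it lies in $Q_{\mathbb{T}}(X)$ by noting that any $\chi\in X^{\triangleright}$ must send $1/b_{n+1}$ into $\{\pm\tfrac14\}\cup\mathbb{T}_3$ and then ruling out $\pm\tfrac14$ using $4\nmid b_{n+1}$, and both conclude with the contradiction $1/b_{n+1}<2/b_{n+1}<1/b_n$. The only difference is organizational: the paper encapsulates the key step in a general statement about abelian topological groups (Proposition~\ref{prop:nec:2x} and Corollary~\ref{cor:nec:2x}(a), phrased via the order of $2x$), which it reuses later for $\mathbb{J}_p$, whereas you verify the same condition by a direct congruence computation in $\widehat{\mathbb{T}}=\mathbb{Z}$.
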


In order to prove Theorem~\ref{thm:nec:div3}, we rely on the following 
proposition, which holds in every abelian topological group. For
\mbox{$x\in G$}, put 
\mbox{$\operatorname{Tr}_x(G)=\{\chi(x) \mid \chi \in \widehat G\}$}.  
Clearly,  $\operatorname{Tr}_x(G)$ is a  subgroup of $ \mathbb{T}$, and 
\mbox{$\operatorname{Tr}_{2x}(G)=2\operatorname{Tr}_{x}(G)$}.

\begin{samepage}
\begin{proposition} \label{prop:nec:2x}
Let $G$ be an abelian topological group, and \mbox{$x \in G$}. Then the 
following are equivalent: 

\begin{myromanlist}

\item  
\mbox{$2x \in Q_{G}(\{x,3x\})$};

\item 
\mbox{$\pm  \frac 1 4\not \in \operatorname{Tr}_{x}(G)$} 
(i.e., \mbox{$\chi(x) \neq \pm \tfrac 1 4$} for every 
\mbox{$\chi \in \widehat G$}); 

\item 
\mbox{$\tfrac 1 2 \not \in \operatorname{Tr}_{2x}(G)$};

\item 
the subgroup  $\operatorname{Tr}_{2x}(G)$   of  $ \mathbb{T}$ has no 
non-zero 2-torsion elements.  

\end{myromanlist}
\end{proposition}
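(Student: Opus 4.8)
The plan is to dispatch the three ``internal'' equivalences (ii)$\Leftrightarrow$(iii)$\Leftrightarrow$(iv) first, since they are purely algebraic, and then to concentrate the real work on (i)$\Leftrightarrow$(ii). For (ii)$\Leftrightarrow$(iii) I would use the identity $\operatorname{Tr}_{2x}(G)=2\operatorname{Tr}_x(G)$ recorded just before the statement: an element $\tfrac12\in\mathbb{T}$ lies in $2\operatorname{Tr}_x(G)$ precisely when $2t=\tfrac12$ for some $t\in\operatorname{Tr}_x(G)$, and the solutions of $2t=\tfrac12$ in $\mathbb{T}$ are exactly $t=\pm\tfrac14$. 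Since $\operatorname{Tr}_x(G)$ is a subgroup, it contains $\tfrac14$ iff it contains $-\tfrac14$, so $\tfrac12\in\operatorname{Tr}_{2x}(G)$ iff $\pm\tfrac14\in\operatorname{Tr}_x(G)$, which is the negation of (ii). For (iii)$\Leftrightarrow$(iv) I would simply note that the $2$-torsion subgroup of $\mathbb{T}$ is $\{0,\tfrac12\}$, so a subgroup of $\mathbb{T}$ has a non-zero $2$-torsion element iff it contains $\tfrac12$; applied to $\operatorname{Tr}_{2x}(G)$ this is exactly the content of (iii) versus (iv).

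The heart of the argument is (i)$\Leftrightarrow$(ii), and here I would first unwind the definition of the quasi-convex hull. By definition, $2x\in Q_G(\{x,3x\})=\{x,3x\}^{\triangleright\triangleleft}$ means precisely that $\chi(2x)\in\mathbb{T}_+$ for \emph{every} $\chi\in\{x,3x\}^{\triangleright}$, i.e.\ for every character $\chi$ with $\chi(x)\in\mathbb{T}_+$ and $\chi(3x)\in\mathbb{T}_+$. Everything thus reduces to understanding, for $t=\chi(x)\in\mathbb{T}$, the implication ``$t\in\mathbb{T}_+$ and $3t\in\mathbb{T}_+$'' $\Rightarrow$ ``$2t\in\mathbb{T}_+$'', and to detecting exactly when it can fail.

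The key computation I would carry out is to determine the ``test set'' $\{t\in\mathbb{T}\mid t\in\mathbb{T}_+,\ 3t\in\mathbb{T}_+\}$. Representing $t$ by a point of $[-\tfrac14,\tfrac14]$, the constraint $3t\in\mathbb{T}_+$ forces $3t$ to lie in $[-\tfrac14,\tfrac14]$ modulo $1$; since $3t$ then ranges over $[-\tfrac34,\tfrac34]$, the only admissible values are $3t\in[-\tfrac14,\tfrac14]$ together with the two ``wrapped'' endpoints $3t=\pm\tfrac34\equiv\mp\tfrac14$. This yields $t\in[-\tfrac1{12},\tfrac1{12}]\cup\{\pm\tfrac14\}$. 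For $t$ in the central interval one has $2t\in[-\tfrac16,\tfrac16]\subseteq\mathbb{T}_+$, whereas for the two exceptional points $2t=\pm\tfrac12=\tfrac12\notin\mathbb{T}_+$. Consequently the implication fails for some admissible $\chi$ exactly when some character sends $x$ to $\pm\tfrac14$; note that for such a character $\chi$ one automatically has $\chi(3x)=\pm\tfrac34\equiv\mp\tfrac14\in\mathbb{T}_+$, so it genuinely lies in $\{x,3x\}^{\triangleright}$ and witnesses $2x\notin Q_G(\{x,3x\})$. Reading this in both directions gives (i)$\Leftrightarrow$(ii): $2x\in Q_G(\{x,3x\})$ holds iff $\pm\tfrac14\notin\operatorname{Tr}_x(G)$.

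I expect the main obstacle to be the modular bookkeeping in the previous paragraph---specifically, noticing that $3t\in\mathbb{T}_+$ is satisfied not only on the expected interval $[-\tfrac1{12},\tfrac1{12}]$ but also at the two isolated points $t=\pm\tfrac14$, where $3t$ wraps around the circle back into $\mathbb{T}_+$ while $2t$ simultaneously escapes to $\tfrac12$. These isolated points are precisely the obstruction to quasi-convexity, and getting the closed/wrapped endpoints right (so that $\pm\tfrac14$ really do produce a character lying in $\{x,3x\}^{\triangleright}$) is the one place where care is needed; the remainder of the argument is routine.
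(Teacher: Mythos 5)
Your treatment of the core equivalence (i)$\Leftrightarrow$(ii) is correct and is essentially the paper's own argument: you compute $\{t\in\mathbb{T}\mid t\in\mathbb{T}_+,\ 3t\in\mathbb{T}_+\}=\mathbb{T}_3\cup\{\pm\tfrac14\}$, observe that $2t\in\mathbb{T}_+$ on $\mathbb{T}_3$ while $2t=\tfrac12\notin\mathbb{T}_+$ at the two exceptional points, and check that a character with $\chi(x)=\pm\tfrac14$ genuinely lies in $\{x,3x\}^\triangleright$ (since $\chi(3x)=\pm\tfrac34\equiv\mp\tfrac14\in\mathbb{T}_+$) and so witnesses $2x\notin Q_G(\{x,3x\})$; this is exactly how the paper proves (i)$\Rightarrow$(ii) and (ii)$\Rightarrow$(i). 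Your (ii)$\Leftrightarrow$(iii) via $\operatorname{Tr}_{2x}(G)=2\operatorname{Tr}_x(G)$ also matches the paper in substance (the paper routes the converse through (iv), but the content is the same).

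The one genuine discrepancy is in (iii)$\Leftrightarrow$(iv). You read ``$2$-torsion element'' as an element killed by $2$, so that the $2$-torsion subgroup of $\mathbb{T}$ is $\{0,\tfrac12\}$ and the equivalence is immediate. The paper means $2$-primary torsion: it explicitly identifies the $2$-torsion subgroup of $\mathbb{T}$ with the Pr\"ufer group $\mathbb{Z}(2^\infty)$, which is infinite, so your identification is false under that reading, and (iv) is the a priori stronger statement $\operatorname{Tr}_{2x}(G)\cap\mathbb{Z}(2^\infty)=\{0\}$. The direction (iii)$\Rightarrow$(iv) then needs one extra line, which the paper supplies: if $\operatorname{Tr}_{2x}(G)$ contained a non-zero element $t$ of order $2^l$, then $2^{l-1}t$ would be a non-zero element of order $2$ lying in the subgroup $\operatorname{Tr}_{2x}(G)$, i.e., $2^{l-1}t=\tfrac12$, contradicting (iii). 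With that line added, your proof is complete under the paper's reading; as written, it proves a formally weaker version of (iv).
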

\end{samepage}

\begin{proof} (i) $\Rightarrow$ (ii): 
Assume that $\pm \tfrac 1 4 \in \operatorname{Tr}_x(G)$. Then there is
$\chi \in \widehat{G}$ such that $\chi(x)=\tfrac 1 4$. Thus,
$\chi(x)\in \mathbb{T}_+$, and $\chi(3x)=\tfrac 3 4 \in \mathbb{T}_+$, but
$\chi(2x)=\tfrac 1 2 \not \in \mathbb{T}_+$. 
Therefore, $2x \not\in Q_G(\{x,3x\})$, contrary to (i). This contradiction 
shows that $\pm \tfrac 1 4 \not\in \operatorname{Tr}_x(G)$.

(ii) $\Rightarrow$ (iii): If 
\mbox{$\frac 1 2 \in \operatorname{Tr}_{2x}(G)=2\operatorname{Tr}_x(G)$}, then
there is \mbox{$\chi \in \widehat{G}$} such that 
\mbox{$2\chi(x) =\tfrac 1 2$}, and thus \mbox{$\chi(x)=\pm \tfrac 1 4$}, 
contrary to (ii). Hence, \mbox{$\frac 1 2 \not\in\operatorname{Tr}_{2x}(G)$}.

(iii) $\Rightarrow$ (iv): Since the Pr\"ufer group $\mathbb{Z}(2^\infty)$ 
is the $2$-torsion subgroup of $\mathbb{T}$, if 
$\operatorname{Tr}_{2x}(G)$ contains a non-zero $2$-torsion element $t$ 
of order $2^l$, then  \mbox{$2^{l-1}t = \tfrac 1 2 \in 
\operatorname{Tr}_{2x}(G)$}, contrary to (iii).
Hence, $\operatorname{Tr}_{2x}(G) \cap \mathbb{Z}(2^\infty)=\{0\}$.

(iv) $\Rightarrow$ (ii): Since 
$\operatorname{Tr}_{2x}(G)=2\operatorname{Tr}_x(G)$, this implication is 
obvious.

(ii) $\Rightarrow$ (i):
Suppose that 
\mbox{$\chi \hspace{-1pt}\in \hspace{-1pt}\{x,3x\}^\triangleright$}. 
Then \mbox{$\chi(x) \hspace{-1pt}\in \mathbb{T}_+$} and
\mbox{$3\chi(x)\hspace{-1pt}=\hspace{-1pt}\chi(3x)
\hspace{-1pt}\in \hspace{-1pt}  \mathbb{T}_+$}.  
Consequently,
\mbox{$\chi(x) \in \{\pm \frac 1 4\} \cup \mathbb{T}_3$}. By (b),
this implies that 
\mbox{$\chi(x) \in \mathbb{T}_3 \subseteq \mathbb{T}_2$.}
Therefore, \mbox{$2\chi(x) = \chi(2x) \in \mathbb{T}_+$.}
Hence, \mbox{$2x \in Q_{G}(\{x,3x\})$}, as desired.
\end{proof}

\begin{samepage}
\begin{corollary} \label{cor:nec:2x}
Let $G$ be an abelian topological group, and $x \in G$. If
\begin{myalphlist}

\item the order of $2x$ is finite and odd; or

\item 
$ \widehat G$ is torsion, and the order of every 
\mbox{$\chi \in \widehat G$} such  that $\chi(x)\neq 0$  is not divisible 
by $4$; 
 
\vspace{6pt}
\end{myalphlist}
then $2x \in Q_{G}(\{x,3x\})$.
\end{corollary}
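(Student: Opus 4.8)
The plan is to prove Corollary~\ref{cor:nec:2x} by verifying, in each of the two cases, the condition~(iv) of Proposition~\ref{prop:nec:2x}, namely that the subgroup $\operatorname{Tr}_{2x}(G)$ of $\mathbb{T}$ contains no non-zero $2$-torsion element. Since Proposition~\ref{prop:nec:2x} already establishes that (iv) is equivalent to $2x \in Q_G(\{x,3x\})$, this reduces the corollary to a short arithmetic verification about the structure of $\operatorname{Tr}_{2x}(G)$.

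For case~(a), I would argue as follows. Suppose $2x$ has finite odd order $m$. Every $\chi \in \widehat G$ satisfies $m\chi(2x) = \chi(2mx) = \chi(0) = 0$, so $\chi(2x)$ is an element of $\mathbb{T}$ of order dividing $m$; hence $\operatorname{Tr}_{2x}(G)$ is contained in the (cyclic) subgroup of $\mathbb{T}$ of order $m$. Since $m$ is odd, this subgroup has no non-zero $2$-torsion, and therefore neither does $\operatorname{Tr}_{2x}(G)$. By Proposition~\ref{prop:nec:2x}, (iv)~$\Rightarrow$~(i) gives $2x \in Q_G(\{x,3x\})$.

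For case~(b), I would instead verify condition~(ii) of Proposition~\ref{prop:nec:2x}, that $\pm\tfrac14 \notin \operatorname{Tr}_x(G)$, which may be more transparent here. Suppose toward a contradiction that some $\chi \in \widehat G$ has $\chi(x) = \pm\tfrac14$. Then $\chi(x) \neq 0$, so by hypothesis the order of $\chi$ is not divisible by $4$. On the other hand, since $\widehat G$ is torsion, $\chi$ has some finite order $k$, and $k\chi(x) = 0$ forces $\pm\tfrac{k}{4} \in \mathbb{Z}$, i.e.\ $4 \mid k$, a contradiction. Hence $\pm\tfrac14 \notin \operatorname{Tr}_x(G)$, and (ii)~$\Rightarrow$~(i) of Proposition~\ref{prop:nec:2x} yields $2x \in Q_G(\{x,3x\})$.

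The main point to get right is the bookkeeping with orders in $\mathbb{T}$: in case~(a) the key is that oddness of the order of $2x$ rules out $2$-torsion in $\operatorname{Tr}_{2x}(G)$, and in case~(b) the key is that $\chi(x) = \pm\tfrac14$ would force the order of $\chi$ to be divisible by $4$. Neither step is a genuine obstacle---both are routine once the equivalent reformulations in Proposition~\ref{prop:nec:2x} are invoked---but I would be careful to pick, in each case, whichever of conditions (ii) or (iv) makes the hypothesis most directly applicable, so that the argument stays clean and avoids unnecessary passage through $\operatorname{Tr}_x(G)$ versus $\operatorname{Tr}_{2x}(G)$.
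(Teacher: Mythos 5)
Your proposal is correct and takes essentially the same approach as the paper: both reduce the statement to the equivalences in Proposition~\ref{prop:nec:2x} via divisibility-of-orders arguments. The only cosmetic difference is that in case~(a) you verify condition~(iv) (no non-zero $2$-torsion in $\operatorname{Tr}_{2x}(G)$) while the paper verifies the equivalent condition~(iii) ($\tfrac12 \notin \operatorname{Tr}_{2x}(G)$), and your case~(b) matches the paper's argument essentially verbatim.
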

\end{samepage}

\begin{proof}
(a)  Since each $\chi \in \widehat G$ is a homomorphism, 
the order of $\chi(2x)$ divides the order of $2x$. So,
if $\chi(2x)=\frac 1 2$, then $2$ divides the order of $2x$, contrary 
to our assumption. Therefore, $\chi(2x)\neq \pm \frac 1 2$ for
all $\chi \in \widehat G$, and the statement follows by 
Proposition~\ref{prop:nec:2x}(iii).

(b) The map $\hat x \colon \widehat G \rightarrow \mathbb{T}$ defined 
by $\hat x(\chi)=\chi(x)$ is a homomorphism, and so the order of 
$\chi(x)$ divides the order of $\chi$. Thus, if $\chi(x)=\pm \frac 1 4$, 
then $4$ divides the order of $\chi$, contrary
to our assumption. Hence, \mbox{$\chi(x)\neq \pm \frac 1 4$} for
all \mbox{$\chi \in \widehat G$}, and the statement follows by 
Proposition~\ref{prop:nec:2x}(ii).
\end{proof}

\begin{proof}[Proof of Theorem~\ref{thm:nec:div3}.]
Suppose that $q_n=3$, and put $x=\frac{1}{b_{n+1}}$. Then
$3x=\frac{1}{b_n}$, and thus $\{x,3x\} \subseteq X$. 
Since $b_{n+1}$ is not divisible by $4$, the order of $2x$ is odd, and so 
by Corollary~\ref{cor:nec:2x}(a), 
$2x \in Q_{\mathbb{T}}(\{x,3x\})$. Consequently,
\begin{align} \label{eq:nec:div3}
2x \in Q_{\mathbb{T}}(\{x,3x\}) \subseteq Q_{\mathbb{T}}(X) = X, 
\end{align}
because $X$ is quasi-convex.  By  Theorem~\ref{thm:nec:gen}, 
$b_1 > b_0 \geq 4$, and so $X=\pi(S)$, where 
$S\subseteq [-\frac 1 4, \frac 1 4]$. Therefore, by Remark~\ref{rem:nec:pi},
(\ref{eq:nec:div3}) implies that $\frac{2}{b_{n+1}} \in S$. This, 
however, is impossible, because 
$\frac{1}{b_{n+1}} < \frac{2}{b_{n+1}} < \frac {1}{b_n}$. This 
contradiction shows that $q_n \neq 3$, as desired.
\end{proof}

Corollary~\ref{cor:nec:2x} also has a useful application for the group 
$\mathbb{J}_p$ of $p$-adic integers.

\begin{corollary}\label{cor:nec:J-2x}
Let $p>2$ be a prime, and
$x \in \mathbb{J}_p$. Then $2x \in Q_{\mathbb{J}_p}(\{x,3x\})$.
\end{corollary}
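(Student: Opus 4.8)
The plan is to reduce everything to Corollary~\ref{cor:nec:2x}(b), so the task is simply to identify the dual group $\widehat{\mathbb{J}_p}$ and verify its two hypotheses.

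First I would recall the standard duality fact that the Pontryagin dual of the compact group $\mathbb{J}_p$ of $p$-adic integers is the discrete Prüfer group $\mathbb{Z}(p^\infty)$. This is manifestly torsion, so the first hypothesis of Corollary~\ref{cor:nec:2x}(b)---that $\widehat{\mathbb{J}_p}$ be torsion---holds immediately.

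Next I would check the divisibility condition on the orders of characters. Every non-zero element of $\mathbb{Z}(p^\infty)$ has order $p^k$ for some $k \geq 1$. Since $p > 2$ is prime, $p$ is odd, and hence $p^k$ is coprime to $4$ for every $k$. Thus no $\chi \in \widehat{\mathbb{J}_p}$ has order divisible by $4$; in particular this holds for every $\chi$ with $\chi(x) \neq 0$, which is precisely the second hypothesis of Corollary~\ref{cor:nec:2x}(b).

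With both hypotheses verified, the conclusion $2x \in Q_{\mathbb{J}_p}(\{x,3x\})$ follows at once from Corollary~\ref{cor:nec:2x}(b). There is no genuine obstacle here: the only step drawing on anything external is the identification of $\widehat{\mathbb{J}_p}$ with $\mathbb{Z}(p^\infty)$, which is classical, and everything else is the elementary observation that $p$-power orders avoid divisibility by $4$ when $p$ is odd.
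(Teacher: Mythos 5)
Your proof is correct and follows essentially the same route as the paper's: identify $\widehat{\mathbb{J}_p}$ with the Pr\"ufer group $\mathbb{Z}(p^\infty)$, observe that all character orders are powers of the odd prime $p$ and hence not divisible by $4$, and invoke Corollary~\ref{cor:nec:2x}(b). The only difference is that you spell out explicitly the torsion hypothesis and the role of $p>2$, which the paper leaves implicit.
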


\begin{proof}
Recall that $\widehat{\mathbb{J}_p} = \mathbb{Z}(p^\infty)$, and so every 
element in  $\widehat{\mathbb{J}_p}$ is of order $p^k$ for some 
\mbox{$k\in\mathbb{N}$}. Thus, $4$ does not divide the order of any 
element in $\widehat{\mathbb{J}_p}$. Therefore,  the statement follows by 
Corollary~\ref{cor:nec:2x}(b).
\end{proof}


\section{Sequences of the form $\boldsymbol{2^{-(a_n+1)}}$ in 
$\boldsymbol{\mathbb{T}}$ and $\boldsymbol{\mathbb{R}}$}

\label{sect:T2R2}

In this section, we prove Theorem~\ref{thm:T2:main} and~\ref{thm:R2:main}. 
The case of Theorem~\ref{thm:T2:main} where $g_n > 1$ for every 
$n\in\mathbb{N}$ follows from Theorem~\ref{thm:intro:DikLeo}(a). Thus, the 
main thrust of the argument in the proof of Theorem~\ref{thm:T2:main} 
is~to show that the conclusion remains true if one permits \mbox{$g_{n_0}=1$} 
for a single index \mbox{$n_0\in \mathbb{N}$}. This is carried out by 
induction on $n_0$. We start off with proving 
sufficiency of the conditions of Theorem~\ref{thm:T2:main} in a special 
case, where $n_0=0$ and $a_0=1$.

\begin{proposition} \label{prop:T2:spec}
Suppose that \mbox{$a_0=1$}, \mbox{$a_1=2$}, \mbox{$2< g_1$}, and 
\mbox{$1< g_n$} for all \mbox{$n \geq 2$}. 
Then $K_{\underline a,2}$ is quasi-convex in $\mathbb{T}$.
\end{proposition}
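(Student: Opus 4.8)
The plan is to show that every point $y \in \mathbb{T}$ outside $K_{\underline a,2}$ can be separated from $K_{\underml a,2}$ by a character in $K_{\underline a,2}^\triangleright$, i.e. to exhibit $\chi \in K_{\underline a,2}^\triangleright$ with $\chi(y) \notin \mathbb{T}_+$. Under the hypotheses we have $x_0 = 2^{-2} = \tfrac14$, $x_1 = 2^{-3} = \tfrac18$, and thereafter $x_n = 2^{-(a_n+1)}$ with gaps $g_n > 1$ (and $g_1 > 2$). Since the characters of $\mathbb{T}$ are the maps $t \mapsto mt$ for $m \in \mathbb{Z} \cong \widehat{\mathbb{T}}$, describing $K_{\underline a,2}^\triangleright$ amounts to determining which integers $m$ satisfy $m x_n \in \mathbb{T}_+$ for all $n$, i.e. $\|m\, 2^{-(a_n+1)}\| \le \tfrac14$ in the circle metric. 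The first step is to set up this integer condition explicitly, recording that $m$ must satisfy $\|m/4\| \le \tfrac14$ and $\|m/8\| \le \tfrac14$ coming from the first two points $x_0,x_1$; these already constrain $m$ modulo small powers of $2$.

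The heart of the argument is a \emph{digit / base-$2$ analysis} of the separating character. Given $y \notin K_{\underline a,2}$, I would write $y$ (after choosing its representative in $(-\tfrac12,\tfrac12]$) and try to build the required $m$ by prescribing its low-order binary digits so that $m x_n$ lands inside $\mathbb{T}_+$ for every $n$, while forcing $m y \notin \mathbb{T}_+$. Because the gaps satisfy $g_n > 1$ (giving room of at least two binary positions between consecutive $a_n$), one has freedom to choose the relevant digits of $m$ at the scales $2^{a_n}$ independently; the special hypotheses $a_0=1$, $a_1=2$, and $g_1 > 2$ are exactly what is needed to accommodate the tightest case at the small end, where $x_0 = \tfrac14$ leaves essentially no slack. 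I expect the construction to split into cases according to where $y$ sits relative to the scales $2^{-(a_n+1)}$, and in each case to produce $m$ by a greedy choice of digits proceeding from the coarsest relevant scale to the finest.

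The step I expect to be the main obstacle is handling the boundary/extremal values of the character sum, namely the points $y$ for which the naive digit choice makes some $m x_n$ sit \emph{exactly} on the boundary $\pm\tfrac14$ of $\mathbb{T}_+$, or makes $my$ fall just inside $\mathbb{T}_+$ rather than strictly outside. This is precisely where the value $x_0 = \tfrac14$ is delicate, since $m x_0 = m/4 \in \mathbb{T}_+$ only for $m \equiv 0, \pm 1 \pmod 4$, and the condition $g_1 > 2$ must be invoked to guarantee enough separation at the next scale to avoid an accidental membership $my \in \mathbb{T}_+$. I anticipate needing a careful inequality, of the form $\|my\| > \tfrac14$, verified by summing a geometric-type tail bound over the scales $2^{a_n}$ and using $g_n \ge 2$ to control the tail, with the borderline index $n_0=0$ treated by hand.

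Once separation is established for an arbitrary $y \notin K_{\underline a,2}$, quasi-convexity follows immediately from the characterization noted after Definition~\ref{def:into:qc}: $E$ is quasi-convex iff for every $x \notin E$ there is $\chi \in E^\triangleright$ with $\chi(x) \notin \mathbb{T}_+$. I would also remark that this special case ($n_0 = 0$, $a_0 = 1$) is the base of the induction on $n_0$ announced before the proposition, so the estimates here should be stated in a form robust enough to be reused, in particular isolating the role of the ``tight'' point $\tfrac14$ so that the inductive step can later reduce a general admissible sequence to this configuration via a suitable homomorphism and Proposition~\ref{prop:intro:qc-hom}.
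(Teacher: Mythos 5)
Your proposal has a genuine gap: it is a plan for a proof rather than a proof. The entire mathematical content --- constructing, for an arbitrary $y \notin K_{\underline a,2}$, an integer $m \in K_{\underline a,2}^\triangleright$ with $\|my\| > \tfrac14$ --- is deferred with phrases like ``I would try to build,'' ``I expect the construction to split into cases,'' and ``I anticipate needing a careful inequality.'' But that construction \emph{is} the proposition; describing it as a greedy digit choice does not establish that such an $m$ exists. The difficulty is real, not bureaucratic: the conditions $\|m x_n\| \le \tfrac14$ for all $n$ are interacting constraints on the residues of $m$ modulo every $2^{a_n+1}$ (not independent digit choices, especially where $g_n = 2$), the target $y$ need not be dyadic, and points of the form $\sum_n \varepsilon_n x_n$ with $\varepsilon_n \in \{-1,0,1\}$ are precisely the ones a naive digit argument fails to separate. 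Indeed, carrying out this direct bipolar analysis for the case $g_n > 1$ for all $n$ is the Main Lemma of \cite{DikLeo} (Theorem~\ref{thm:intro:DikLeo}(a) here), which is a substantial result; your sketch would need to reprove it from scratch \emph{and} handle the extra tight configuration $a_0 = 1$, $a_1 = 2$.

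The paper's proof avoids all of this by taking Theorem~\ref{thm:intro:DikLeo}(a) as a black box and reducing to it: writing $K_{\underline a,2} = \{\pm\tfrac14, \pm\tfrac18\} \cup K_{\underline a',2}$ with $a'_n = a_{n+2}$, applying the homomorphism $f(x) = 8x$ together with Proposition~\ref{prop:intro:qc-hom} to get
\begin{align}
Q_\mathbb{T}(K_{\underline a,2}) \subseteq f^{-1}\bigl(K_{\underline{a'-3},2}\bigr) = \bigcup_{i=-3}^{4} \bigl(\tfrac i8 + K_{\underline a',2}\bigr),
\end{align}
and then eliminating the finitely many spurious cosets with a handful of explicit characters ($7$, and $2^{a_n-1}-1$, $2^{a_n-1}-7$ for the points of $\pm\tfrac18 + K_{\underline a',2}$). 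So only finitely many separation checks are ever done by hand, each against a concretely named integer. If you want to salvage your direct approach, the honest comparison is this: what you are proposing is essentially the (hard) route of \cite{DikLeo}, whereas the proposition is designed to be proved cheaply \emph{from} that result; your sketch, as written, neither completes the hard route nor notices the cheap one.
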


\begin{notation}
For $g \in \mathbb{Z}$, we denote by $\underline{a+g}$ the sequence whose 
$n$th term is $a_n+g$.
\end{notation}

\begin{proof}[Proof of Proposition~\ref{prop:T2:spec}.]
Let \mbox{$\underline{a}^\prime =\{a_n^\prime\}_{n=0}^\infty$}
denote the sequence defined by \mbox{$a_n^\prime = a_{n+2}$}. Then
for every $n \in \mathbb{N}$,
\begin{align} \label{eq:T2:a'}
a^\prime_{n} & =a_{n+2} \geq a_2 = a_1+g_1 \geq 5, \text{ and} \\
\label{eq:T2:g'}
g_n^\prime &=a_{n+1}^\prime - a_n^\prime = a_{n+3}-a_{n+2} = g_{n+2} >1.
\end{align}
Observe that
\begin{align}\label{eq:T2:Kaa'}
K_{\underline{a},2} = \{\pm \tfrac 1 4,\pm \tfrac 1 8\}\cup
K_{\underline{a}^\prime,2}.
\end{align}
Let $f\colon \hspace{-0.15pt}\mathbb{T} \rightarrow \mathbb{T}$ denote the 
continuous homomorphism
defined by \mbox{$f(x)=8x$}. By (\ref{eq:T2:Kaa'}),
\begin{align}
f(K_{\underline{a},2}) = 8 K_{\underline{a^\prime},2} = 
K_{\underline{a^\prime-3},2}. 
\end{align}
Consequently, by Proposition~\ref{prop:intro:qc-hom},
\begin{align} \label{eq:T2:QK}
Q_\mathbb{T}(K_{\underline{a},2}) \subseteq 
f^{-1}(Q_\mathbb{T}(K_{\underline{a^\prime-3},2})).
\end{align}
By (\ref{eq:T2:a'}), one has $a_0^\prime -3 \geq 2$, and by (\ref{eq:T2:g'}),
$(a_{n+1}^\prime -3) - (a_n^\prime-3)=g_n^\prime > 1$ for 
every $n\in \mathbb{N}$.  Thus, according to
Theorem~\ref{thm:intro:DikLeo}(a),
$K_{\underline{a^\prime-3},2}$ is quasi-convex in $\mathbb{T}$, and so by
(\ref{eq:T2:QK}),
\begin{align}
Q_\mathbb{T}(K_{\underline{a},2}) \subseteq 
f^{-1}(K_{\underline{a^\prime-3},2}) = 
\bigcup\limits_{i=-3}^4 (\tfrac i 8 + K_{\underline{a}^\prime,2}).
\end{align}
Since $a_0^\prime \geq 5$ by (\ref{eq:T2:a'}), one has
$K_{\underline{a}^\prime,2} \subseteq \mathbb{T}_{16}$. So, for
$i=-3,3,4$, 
\begin{align} \label{eq:T2:-334}
(\tfrac{i}{8} + K_{\underline{a}^\prime,2}) \cap \mathbb{T}_+ 
\subseteq (\tfrac{i}{8} + \mathbb{T}_{16}) \cap \mathbb{T}_+ = \emptyset.
\end{align}
As $K_{\underline{a},2} \subseteq \mathbb{T}_+$ and 
$\mathbb{T}_+ = \{1\}^\triangleleft$ is quasi-convex, 
$Q_\mathbb{T}(K_{\underline{a},2}) \subseteq \mathbb{T}_+$.
Therefore, by (\ref{eq:T2:-334}),
\begin{align} \label{eq:T2:QK2}
Q_\mathbb{T}(K_{\underline{a},2}) \subseteq
\bigcup\limits_{i=-2}^2 (\tfrac i 8 + K_{\underline{a}^\prime,2}).
\end{align}
Recall that $K_{\underline{a}^\prime,2} \subseteq \mathbb{T}_{16}$. 
Thus,
\begin{align}
7 K_{\underline{a}^\prime,2} \subseteq 7 \mathbb{T}_{16}
\subseteq 8 \mathbb{T}_{16}
\subseteq \mathbb{T}_2 \subseteq \mathbb{T}_+,
\end{align}
and $7 \in K_{\underline{a}^\prime,2}^\triangleright$.
Since $7\in\{\pm \frac 1 4,\pm \frac 1 8\}^\triangleright$, it follows 
from (\ref{eq:T2:Kaa'}) that
\begin{align}
7 \in \{\pm \tfrac 1 4,\pm \tfrac 1 8\}^\triangleright 
\cap  K_{\underline{a}^\prime,2}^\triangleright = 
(\{\pm \tfrac 1 4,\pm \tfrac 1 8\}\cup
K_{\underline{a}^\prime,2})^\triangleright = 
K_{\underline{a},2}^\triangleright.
\end{align}
Consequently,
\begin{align}
Q_\mathbb{T}(K_{\underline{a},2}) =
K_{\underline{a},2}^{\triangleright\triangleleft} \subseteq
\{1,7\}^\triangleleft = \{\pm \tfrac 1 4\} \cup
\mathbb{T}_{7}\cup (\tfrac 1 7 + \mathbb{T}_7) \cup
(- \tfrac 1 7 + \mathbb{T}_7).
\end{align}
Therefore,
\begin{align}
Q_\mathbb{T}(K_{\underline{a},2}) \cap
(\tfrac 1 4 + K_{\underline{a}^\prime,2}) \subseteq
Q_\mathbb{T}(K_{\underline{a},2}) \cap
(\tfrac 1 4 + \mathbb{T}_{16})\subseteq
(\tfrac 1 7 + \mathbb{T}_{7}) \cap
(\tfrac 1 4 + \mathbb{T}_{16}) = \{\tfrac 1 4\}.
\end{align}
Similarly, 
\begin{align}
Q_\mathbb{T}(K_{\underline{a},2}) \cap 
(- \tfrac 1 4 + K_{\underline{a}^\prime,2}) \subseteq \{- \tfrac 1 4\}.
\end{align}
Hence, by (\ref{eq:T2:QK2}),
\begin{align} \label{eq:T2:QK3}
Q_\mathbb{T}(K_{\underline{a},2}) \subseteq \{\pm \tfrac 1 4\} \cup
K_{\underline{a}^\prime,2} \cup (\tfrac 1 8 + K_{\underline{a}^\prime,2})
\cup (-\tfrac 1 8 + K_{\underline{a}^\prime,2}).
\end{align}
In light of (\ref{eq:T2:Kaa'}) and (\ref{eq:T2:QK3}), we conclude by 
showing that
\begin{align}
Q_\mathbb{T}(K_{\underline{a},2}) \cap 
(\tfrac 1 8 + K_{\underline{a}^\prime,2})\subseteq \{\tfrac 1 8\}.
\end{align}
It will follow by symmetry that
\begin{align}
Q_\mathbb{T}(K_{\underline{a},2}) \cap
(-\tfrac 1 8 + K_{\underline{a}^\prime,2})\subseteq \{-\tfrac 1 8\}.
\end{align}
To that end, we prove that for every 
$x \in \tfrac 1 8 + K_{\underline{a}^\prime,2}$, if $x \neq \tfrac 1 8$,
then there is $k \in K_{\underline{a},2}^\triangleright$ such that
$kx \not \in \mathbb{T}_+$.
Let $x=\frac 1 8 \pm x_n \in (\tfrac 1 8 + K_{\underline{a}^\prime,2})$.
Then $n \geq 2$, so $x_n \in \mathbb{T}_{16}$, and
\begin{align}
 (2^{a_n-1}-1)(\tfrac 1 8 - x_n) & = 
(2^{a_n-1}-1)\tfrac 1 8 - (2^{a_n-1}-1) x_n   \\
& \equiv_1 -\tfrac 1 8 -\tfrac 1 4 + x_n = 
-\tfrac 3 8 + x_n \not\in \mathbb{T}_+,\\
 (2^{a_n-1}-7)(\tfrac 1 8 + x_n) & = (2^{a_n-1}-7)\tfrac 1 8 +
(2^{a_n-1}-7) x_n   \\
& \equiv_1 -\tfrac 7 8 +\tfrac 1 4 -7 x_n 
 \equiv_1 \tfrac 3 8 - 7x_n \not\in \mathbb{T}_+.
\end{align}
It remains to show that 
\mbox{$2^{a_n-1}-1$} and  \mbox{$2^{a_n-1}-7$} are indeed in 
$K_{\underline{a},2}^\triangleright$. By (\ref{eq:T2:a'}),
\mbox{$a_n \geq 5$} for every \mbox{$n \geq 2$}.
Thus, $2^{a_n-1}-1, 2^{a_n-1}-7 \in 
\{\pm \tfrac 1 4, \pm \tfrac 1 8\}^\triangleright$.
If $2 \leq m < n$, then
$2^{a_n-1} x_m = 2^{a_n-a_m-2} \equiv_1 0$ (because $g_n >1$ for every $n 
\geq 2$). Consequently, 
\begin{align}
(2^{a_n-1}-1)x_m = -x_m \in \mathbb{T}_+ \quad
\text{and} \quad (2^{a_n-1}-7)x_m 
= -7x_m \in 7\mathbb{T}_{16} \subseteq \mathbb{T}_+. 
\end{align}
On the other hand, if
$n \leq m$, then $x_m \in \mathbb{T}_{2^{a_m-1}}$. Therefore,
\begin{align}
(2^{a_n-1}-1)x_m \subseteq (2^{a_n-1}-1) \mathbb{T}_{2^{a_m-1}} \subseteq
2^{a_n-1} \mathbb{T}_{2^{a_m-1}} \subseteq 
2^{a_m-1} \mathbb{T}_{2^{a_m-1}} = \mathbb{T}_+, \\
(2^{a_n-1}-7)x_m \subseteq (2^{a_n-1}-7) \mathbb{T}_{2^{a_m-1}} \subseteq
2^{a_n-1} \mathbb{T}_{2^{a_m-1}} \subseteq
2^{a_m-1} \mathbb{T}_{2^{a_m-1}} = \mathbb{T}_+.
\end{align}
Hence, by (\ref{eq:T2:Kaa'}),
\begin{align}
2^{a_n-1}-1, 2^{a_n-1}-7 \in 
\{\pm \tfrac 1 4, \pm \tfrac 1 8\}^\triangleright \cap
K_{\underline{a}^\prime,2}^\triangleright = 
(\{\pm \tfrac 1 4,\pm \tfrac 1 8\}\cup
K_{\underline{a}^\prime,2})^\triangleright = 
K_{\underline{a},2}^\triangleright,
\end{align}
as desired.
\end{proof}

\begin{lemma} \label{lemma:T2:div}
Let $Y \subseteq \mathbb{T}$ be symmetric, $m\in 
\mathbb{N}\backslash\{0\}$, and $0 < k \leq 2m$.

\begin{myalphlist}

\item
If $Y \subseteq \mathbb{T}_m$ and $kY$ is quasi-convex in
$\mathbb{T}$, then $Y$ is quasi-convex too.

\item
If $Y \subseteq \mathbb{T}_{4m}$ and $4mY$ is quasi-convex in $\mathbb{T}$, 
then $Y^\prime=\{\pm \frac 1 {4m}\} \cup Y$ is quasi-convex too.

\end{myalphlist}
\end{lemma}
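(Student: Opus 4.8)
The plan is to treat both parts through the multiplication homomorphism together with Proposition~\ref{prop:intro:qc-hom}, reducing the quasi-convexity of the smaller set to that of its image. For part~(a), let $f\colon\mathbb{T}\to\mathbb{T}$ be the continuous homomorphism $f(x)=kx$, so that $f(Y)=kY$. Since $kY$ is quasi-convex by hypothesis, Proposition~\ref{prop:intro:qc-hom} gives $Q_\mathbb{T}(Y)\subseteq f^{-1}(Q_\mathbb{T}(kY))=f^{-1}(kY)$, and the latter is exactly the union $\bigcup_{i=0}^{k-1}\big(\tfrac ik+Y\big)$ of copies of $Y$ shifted by the $k$-torsion of $\mathbb{T}$. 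Thus every point of $Q_\mathbb{T}(Y)$ has the form $\tfrac ik+y$ with $y\in Y$, and the whole task is to show that the copies with $i\not\equiv0\pmod k$ contribute nothing outside $Y$.

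The second ingredient is a supply of small separating characters. Because $Y\subseteq\mathbb{T}_m=\pi([-\tfrac1{4m},\tfrac1{4m}])$, for every integer $\ell$ with $|\ell|\le m$ one has $\ell Y\subseteq\pi([-\tfrac14,\tfrac14])=\mathbb{T}_+$, so that $\{-m,\dots,m\}\subseteq Y^\triangleright$. Taking $\ell=1$ already yields $Q_\mathbb{T}(Y)\subseteq\mathbb{T}_+$, and more generally $Q_\mathbb{T}(Y)\subseteq\{1,\dots,m\}^\triangleleft$. The crux --- and the step I expect to be the main obstacle --- is then to show that if $x=\tfrac ik+y\in Q_\mathbb{T}(Y)$ with $i\not\equiv0\pmod k$, then in fact $x\in Y$. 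Here one uses that $\tfrac ik$ is a nonzero torsion point whose denominator divides $k\le 2m$: writing it in lowest terms $\tfrac{j'}{k'}$ with $2\le k'\le 2m$, the multiples $\ell\tfrac{j'}{k'}$ for $\ell=1,\dots,m$ are forced to leave the arc $\mathbb{T}_+$, and since $y\in\mathbb{T}_m$ perturbs $x$ by at most $\tfrac1{4m}$, some $\ell\in\{1,\dots,m\}\subseteq Y^\triangleright$ gives $\ell x\notin\mathbb{T}_+$ --- contradicting $x\in\{1,\dots,m\}^\triangleleft$ --- unless the perturbation places $x$ exactly on a boundary point already belonging to the symmetric set $Y$. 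Carrying out this finite case analysis on the size of $x$ relative to $\tfrac14,\tfrac18,\dots$, and invoking $k\le 2m$ to guarantee a suitable $\ell$, yields $Q_\mathbb{T}(Y)\subseteq Y$; the reverse inclusion is automatic, so $Y$ is quasi-convex.

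For part~(b) I would run the same argument with $k=4m$, taking $f(x)=4mx$. This map sends each of the two extra points $\pm\tfrac1{4m}$ to $0$ and sends $Y$ onto $4mY$, so $f(Y')=\{0\}\cup 4mY=4mY$ is quasi-convex, and Proposition~\ref{prop:intro:qc-hom} again yields $Q_\mathbb{T}(Y')\subseteq f^{-1}(4mY)=\bigcup_{i=0}^{4m-1}\big(\tfrac i{4m}+Y\big)$. Since $Y'\subseteq\mathbb{T}_m$ (the new points $\pm\tfrac1{4m}$ lie on the boundary of $\mathbb{T}_m$), one still has $\{-m,\dots,m\}\subseteq (Y')^\triangleright$ and hence $Q_\mathbb{T}(Y')\subseteq\mathbb{T}_+$. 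The separation analysis is then identical in spirit to part~(a), with the single difference that the two boundary points $\pm\tfrac1{4m}$, which arise precisely from the shifts $i=\pm1$, are now legitimate members of $Y'$ and must be admitted rather than separated; every other nonzero-torsion shift is excluded by some $\ell\in\{1,\dots,m\}$ exactly as before, giving $Q_\mathbb{T}(Y')=Y'$.
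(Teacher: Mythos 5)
Your part (a) is essentially the paper's own argument: pull back along $f(x)=kx$ via Proposition~\ref{prop:intro:qc-hom} to get $Q_\mathbb{T}(Y)\subseteq\bigcup_{i=0}^{k-1}(\tfrac ik+Y)$, cut down by the quasi-convex set $\mathbb{T}_m=\{1,\ldots,m\}^\triangleleft$, and handle the nonzero shifts. The paper does the last step by a single norm estimate rather than your per-shift character hunt: writing $\|x\|$ for the distance of $x$ from $0$, any $x=\tfrac ik+y$ with $i\neq 0$, $y\in Y$ satisfies $\|x\|\geq\tfrac1k-\tfrac1{4m}\geq\tfrac1{2m}-\tfrac1{4m}=\tfrac1{4m}$, with equality only when $k=2m$ and $y=\mp\tfrac1{4m}$, in which case $x=-y\in Y$ by symmetry; this is exactly the boundary phenomenon you anticipate, and your sketch, though it conflates ``a character excluding $\tfrac ik$'' with ``a character excluding $x$'', is repairable into the same proof.

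Part (b) is where the proposal genuinely breaks down. After intersecting $\bigcup_{i=0}^{4m-1}(\tfrac i{4m}+Y)$ with $\mathbb{T}_m$, the shifts with $2\leq i\leq 4m-2$ do disappear, but the copies at $i=\pm1$ do \emph{not} reduce to the two points $\pm\tfrac1{4m}$: since now $Y\subseteq\mathbb{T}_{4m}$, a point $x=\tfrac1{4m}+y$ with $y\in Y$ and $-\tfrac1{16m}\leq y<0$ has $\tfrac{3}{16m}\leq\|x\|<\tfrac1{4m}$, so it lies in $\mathbb{T}_m\cap\mathbb{T}_+$ while belonging neither to $Y$ (its norm exceeds $\tfrac1{16m}$) nor to $\{\pm\tfrac1{4m}\}$. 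No character $\ell\in\{1,\ldots,m\}$ can expel it, because $\|\ell x\|\leq m\|x\|<\tfrac14$; so your claim that every unwanted point is ``excluded by some $\ell\in\{1,\dots,m\}$ exactly as before'' fails precisely on these points, and your proof of (b) never produces a character that separates them. This is why the paper's proof of (b) has a different architecture: for $m=1$ it first proves $Q_\mathbb{T}(Y')\subseteq\{1,3,4\}^\triangleleft=\{\pm\tfrac14\}\cup\mathbb{T}_4$ --- it is the character $3$, not any $\ell\leq m$, that kills the points $\tfrac14+y$ with $y<0$, since $3(\tfrac14+y)=-\tfrac14+3y$ in $\mathbb{T}$ lies outside $\mathbb{T}_+$ --- and only then intersects with the four translates of $Y'$; for general $m$ it does not redo any analysis but bootstraps: $Z=mY\subseteq\mathbb{T}_4$ has $4Z=4mY$ quasi-convex, so the case $m=1$ makes $mY'=\{\pm\tfrac14\}\cup Z$ quasi-convex, and then part (a) with $k=m$ (legitimate since $Y'\subseteq\mathbb{T}_m$) descends to $Y'$. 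To repair your version you would need to exhibit the missing characters (the analogue of $3$, e.g.\ $3m$, checking $3m\in(Y')^\triangleright$), or adopt the paper's scale-down-and-apply-(a) reduction.
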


\begin{proof}
For $x\in \mathbb{T}$, let $\|x\|$ denote the distance of $x$ from $0$, 
that is, with some abuse of notations,
$\|x\|=\inf\limits_{n \in \mathbb{Z}} |x+n|$.

(a) Let $f\colon \mathbb{T} \rightarrow  \mathbb{T}$ denote the continuous 
homomorphism defined by $f(x)=kx$. Since $kY$ is quasi-convex in $\mathbb{T}$,
by 
Proposition~\ref{prop:intro:qc-hom},
\begin{align}
Q_\mathbb{T}(Y) \subseteq f^{-1}(Q_\mathbb{T}(f(Y))) =
f^{-1}(Q_\mathbb{T}(kY)) = f^{-1}(kY) = 
\bigcup\limits_{i=0}^{k-1} (\tfrac i k + Y).
\end{align}
On the other hand, $Q_\mathbb{T}(Y) \subseteq \mathbb{T}_m$, because 
$Y \subseteq \mathbb{T}_m$, and 
$\mathbb{T}_m=\{1,\ldots, m\}^\triangleleft$ 
is quasi-convex. Thus,
\begin{align}
Q_\mathbb{T}(Y) \subseteq
\bigcup\limits_{i=0}^{k-1} (\tfrac i k + Y)\cap\mathbb{T}_m.
\end{align}
If $x= \frac i k + y \in \frac i k + Y$, where $y \in Y$ and $i\neq 0$, then 
\begin{align}
\tfrac 1 {4m} = \tfrac 1 {2m} - \tfrac 1 {4m}
\leq \tfrac 1 k - \tfrac 1 {4m} \leq \| \tfrac i k \| - \|y\| \leq \|x\|,
\end{align}
with equality if and only if $k=2m$, and 
\begin{myromanlist}
\item
$y= - \frac 1 {4m}$ and $i=1$; or

\item
$y=\frac 1 {4m}$ and $i=k-1$.

\vspace{10pt}
\end{myromanlist}
In either case, $x=-y \in Y$, because $Y$ is symmetric. In all other 
cases (e.g., when $k\neq 2m$), one has $x \not \in \mathbb{T}_m$. 
Therefore,
\begin{align}
Q_\mathbb{T}(Y) \subseteq
\bigcup\limits_{i=0}^{k-1} (\tfrac i k + Y)\cap \mathbb{T}_m = Y, 
\end{align}
as desired.

(b)  First, suppose that $m=1$. Then $Y \subseteq \mathbb{T}_4$, and so
$Y^\prime \subseteq \{\pm \frac 1 4 \} \cup \mathbb{T}_4$. 
The set $\{\pm \frac 1 4 \} \cup \mathbb{T}_4$ is quasi-convex, because it 
is equal to $\{1,3,4\}^\triangleleft$. Thus, 
\begin{align} \label{eq:T2:QY}
Q_\mathbb{T}(Y^\prime) \subseteq \{\pm \tfrac 1 4 \} \cup \mathbb{T}_4.
\end{align}
Let $f\colon \mathbb{T} \rightarrow  \mathbb{T}$ denote the continuous 
homomorphism defined by $f(x)=4x$. Since  $4Y^\prime = 4Y$ is 
quasi-convex in $\mathbb{T}$, by Proposition~\ref{prop:intro:qc-hom},
\begin{align}
Q_\mathbb{T}(Y^\prime) \subseteq f^{-1}(Q_\mathbb{T}(f(Y^\prime)))  =
f^{-1}(Q_\mathbb{T}(4Y^\prime)) & = f^{-1}(4Y^\prime) \\
& = Y^\prime \cup (\tfrac 1 4 + Y^\prime) \cup 
(\tfrac 1 2 + Y^\prime) \cup (- \tfrac 1 4 + Y^\prime).
\end{align}
As $Y^\prime \subseteq \mathbb{T}_+$, 
\begin{align}
(\tfrac 1 2 + Y^\prime) \cap (\{\pm \tfrac 1 4 \} \cup \mathbb{T}_4) = 
\{\pm \tfrac 1 4 \} \subseteq Y^\prime. 
\end{align}
Since
\begin{align}
\tfrac 1 4 + Y^\prime = (\tfrac 1 4 + Y) \cup \{0,\tfrac 1 2\} \subseteq
(\tfrac 1 4 + \mathbb{T}_4) \cup \{0,\tfrac 1 2\},
\end{align}
one has
\begin{align}
(\tfrac 1 4 + Y^\prime) \cap (\{\pm \tfrac 1 4 \} \cup \mathbb{T}_4) =
\{0,\tfrac 1 4\} \subseteq Y^\prime. 
\end{align}
Similarly,
\begin{align}
(-\tfrac 1 4 + Y^\prime) \cap (\{\pm \tfrac 1 4 \} \cup \mathbb{T}_4) =
\{0,\tfrac 1 4\} \subseteq Y^\prime.
\end{align}
Hence, by (\ref{eq:T2:QY}),
$Q_{\mathbb{T}}(Y^\prime) \subseteq Y^\prime$, as desired. 

If \mbox{$m>1$}, then put \mbox{$Z=mY$} and 
\mbox{$Z^\prime = m Y^\prime = \{ \pm \frac 1 4 \} \cup Z$}. 
Since \mbox{$Y \subseteq \mathbb{T}_{4m}$},
one has \mbox{$Z \subseteq \mathbb{T}_4$}. By our assumption, 
\mbox{$4Z = 4m Y$} is quasi-convex in $\mathbb{T}$. Thus, by what we have 
shown so far,
\mbox{$Z^\prime=mY^\prime$} is quasi-convex. By (a), this implies that 
$Y^\prime$ 
is quasi-convex, because
\mbox{$Y^\prime \subseteq \mathbb{T}_{m}$}  and \mbox{$m \leq 2m$}.
\end{proof}

Using Lemma~\ref{lemma:T2:div}, the assumption that $a_0=1$ in 
Proposition~\ref{prop:T2:spec} can be sufficiently relaxed to cover the 
base of the inductive proof of Theorem~\ref{thm:T2:main}.

\begin{corollary} \label{cor:T2:spec}
Suppose that $0<a_0$, $g_0=1$, $2<g_1$, and $1< g_n$ for all $n \geq 2$.
Then $K_{\underline a,2}$ is quasi-convex.
\end{corollary}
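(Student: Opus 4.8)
The plan is to reduce the general case $a_0 \geq 1$ back to the special case $a_0 = 1$ already settled in Proposition~\ref{prop:T2:spec}, and then transport quasi-convexity across a scaling map using Lemma~\ref{lemma:T2:div}(a). The guiding observation is that the hypotheses on $K_{\underline a,2}$ depend only on the gaps $g_n$, which are invariant under shifting all exponents down by a fixed amount; the cost of such a shift is a dilation by a power of $2$, which Lemma~\ref{lemma:T2:div}(a) lets us undo.

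First I would introduce the shifted sequence $\underline b = \{b_n\}_{n=0}^\infty$ defined by $b_n = a_n - a_0 + 1$. Since gaps are translation-invariant, $b_{n+1} - b_n = a_{n+1} - a_n = g_n$, so $\underline b$ has exactly the same gaps as $\underline a$; moreover $b_0 = 1$ and $b_1 = b_0 + g_0 = 2$. Thus the hypotheses $g_0 = 1$, $2 < g_1$, and $1 < g_n$ for $n \geq 2$ translate verbatim into the hypotheses of Proposition~\ref{prop:T2:spec} for $\underline b$, so $K_{\underline b,2}$ is quasi-convex in $\mathbb{T}$.

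Next I would relate the two sets by a dilation. Writing $x_n = 2^{-(a_n+1)}$, a direct computation gives $2^{a_0-1} x_n = 2^{-(b_n+1)}$, so $2^{a_0-1} K_{\underline a,2} = K_{\underline b,2}$. Setting $k = m = 2^{a_0-1}$, the element of largest modulus in $K_{\underline a,2}$ is $x_0 = 2^{-(a_0+1)} = \tfrac{1}{4m}$, whence $K_{\underline a,2} \subseteq \mathbb{T}_m$; the set is symmetric; and $k = m \leq 2m$. Applying Lemma~\ref{lemma:T2:div}(a) with $Y = K_{\underline a,2}$ then yields quasi-convexity of $K_{\underline a,2}$ from that of $kY = K_{\underline b,2}$.

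The only delicate point is the simultaneous bookkeeping of the dilation factor $k$, the level $m$, and the cutoff $\tfrac{1}{4m}$ defining $\mathbb{T}_m$: fitting the whole set inside $\mathbb{T}_m$ forces $m \leq 2^{a_0-1}$, while the constraint $k \leq 2m$ of Lemma~\ref{lemma:T2:div}(a) forces $m \geq 2^{a_0-2}$, and the choice $m = k = 2^{a_0-1}$ is the natural one meeting both. In the boundary case $a_0 = 1$ the dilation factor is $k = 1$ and the statement is literally Proposition~\ref{prop:T2:spec}, so no reduction is needed. I do not anticipate a genuine obstacle: all the real content sits in Proposition~\ref{prop:T2:spec} and Lemma~\ref{lemma:T2:div}(a), and this corollary is a clean dilation argument layered on top of them.
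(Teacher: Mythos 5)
Your proof is correct and is essentially identical to the paper's: the paper also passes to the shifted sequence $\underline{a-a_0+1}$ (your $\underline b$), invokes Proposition~\ref{prop:T2:spec}, and then applies Lemma~\ref{lemma:T2:div}(a) with $k=m=2^{a_0-1}$, using the containment $K_{\underline a,2}\subseteq\mathbb{T}_{2^{a_0-1}}$. Your additional bookkeeping about the admissible range of $m$ and the boundary case $a_0=1$ is sound but not needed beyond what the paper records.
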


\begin{proof}
Since the sequence $\underline{a}$ is increasing, one has
$K_{\underline a,2} \subseteq \mathbb{T}_{2^{a_0-1}}$.
The sequence $\underline{a-a_0+1}$ satisfies the conditions of 
Proposition~\ref{prop:T2:spec}, and thus
$2^{a_0-1} K_{\underline{a},2}=K_{\underline{a-a_0+1},2}$
is quasi-convex. Hence, by Lemma~\ref{lemma:T2:div}(a) 
with $k=m=2^{a_0-1}$, one obtains that $K_{\underline{a},2}$ is 
quasi-convex, as desired.
\end{proof}

\begin{proof}[Proof of Theorem~\ref{thm:T2:main}.]
Suppose that $K_{\underline{a}, 2}$ is quasi-convex in $\mathbb{T}$, and 
put  \mbox{$b_n=2^{a_n+1}$.}
Using the notations of Section~\ref{sect:nec},
\begin{align}
q_n = \dfrac{b_{n+1}}{b_n} = 2^{a_{n+1}-a_n} = 2^{g_n}. 
\end{align}
By Theorem~\ref{thm:nec:gen}(a), $b_0=2^{a_0+1} \geq 4$, and thus
$a_0 \geq 1$.
By Theorem~\ref{thm:nec:gen}(b), there is at most one index
\mbox{$n \in \mathbb{N}$}
such that $q_n=2$. Therefore, $g_n=1$ for at most one index $n$,
and so $g_n >1$ for all but one $n \in \mathbb{N}$, because 
$\{a_n\}_{n=0}^\infty$ is
increasing. By 
Theorem~\ref{thm:nec:gen}(c), if $q_n=2$ for some $n \in \mathbb{N}$, 
then $q_{n+1} > 4$. Hence, if $g_n=1$ for some $n \in \mathbb{N}$, 
then $g_{n+1}>2$.

Conversely, suppose that $\underline{a}$ satisfies (i)--(iii). If
$g_n > 1$ for all $n \in \mathbb{N}$, then by 
Theorem~\ref{thm:intro:DikLeo}(a), $K_{\underline a, 2}$ is quasi-convex
in $\mathbb{T}$, and there is nothing left to prove. So, we may assume 
that there is $n_0 \in \mathbb{N}$ such that $g_{n_0}=1$. In this case, 
by (ii), there is precisely one such index $n_0$. We proceed by induction 
on $n_0$.

If $n_0=0$, then by Corollary~\ref{cor:T2:spec}, $K_{\underline a, 2}$ is 
quasi-convex in $\mathbb{T}$. For the inductive step, suppose that the 
theorem holds for  all sequences such that $g_{n_0}=1$. Let $\underline a$ 
be a sequence that satisfies (i)--(iii) such that $g_{n_0 +1}=1$.
Let $\underline{a}^\prime = \{a_n^\prime\}_{n=0}^\infty$ denote the 
sequence 
defined by $a_n^\prime = a_{n+1}-a_0 -1$. As $g_{n_0+1}=1$, by (ii),
$g_n > 1$ for all $n \neq n_0+1$. In particular, $g_0 > 1$, and so
\begin{align}
a_0^\prime = a_1 -a_0-1 =g_0-1 >0.
\end{align}
This shows that $\underline{a}^\prime$ satisfies (i).
Since $\underline a$ satisfies (ii)--(iii) and
\begin{align}
g_n^\prime = a_{n+1}^\prime -a_n^\prime =
(a_{n+2} -a_0 -1) - (a_{n+1} -a_0 -1) = a_{n+2}- a_{n+1}=g_{n+1},
\end{align}
so does $\underline{a}^\prime$, and $g_{n_0}^\prime=1$. Thus, by the 
inductive hypothesis, $K_{\underline{a}^\prime,2}$ is quasi-convex in 
$\mathbb{T}$. Put \mbox{$m=2^{a_0-1}$}. Clearly,
\begin{align}
4m K_{\underline{a^\prime +a_0+1},2} =
4\cdot 2^{a_0-1} K_{\underline{a^\prime +a_0+1},2} = 
2^{a_0+1} K_{\underline{a^\prime +a_0+1},2} =  
K_{\underline{a^\prime},2}.
\end{align}
The smallest member of the sequence  $\underline{a^\prime +a_0+1}$ is 
\begin{align}
a_0^\prime+a_0+1 = a_1=a_0+g_0 \geq a_0+2.
\end{align}
Therefore,
\begin{align}
K_{\underline{a^\prime +a_0+1},2} \subseteq
\mathbb{T}_{2^{(a_0^\prime+a_0+1)-1}} = 
\mathbb{T}_{2^{a_0+g_0-1}} \subseteq \mathbb{T}_{2^{a_0+1}} = 
\mathbb{T}_{4m}.
\end{align}
Hence, by Lemma~\ref{lemma:T2:div}(b) applied
to the set $Y=K_{\underline{a^\prime +a_0+1},2}$,
\begin{align}
\{\pm\tfrac{1}{4m}\} \cup K_{\underline{a^\prime +a_0+1},2} = 
\{\pm\tfrac{1}{2^{a_0+1}}\} \cup K_{\underline{a^\prime +a_0+1},2} = 
\{\pm 2^{-(a_0+1)}\} \cup K_{\underline{a^\prime +a_0+1},2} = 
K_{\underline{a},2}
\end{align}
is quasi-convex in $\mathbb{T}$. This completes the proof. 
\end{proof}

\begin{proof}[Proof of Theorem~\ref{thm:R2:main}.]
Suppose that $R_{\underline{a}, 2}$ is quasi-convex in $\mathbb{R}$, and 
put \mbox{$b_n=2^{a_n+1}$}.  
Using the notations of Section~\ref{sect:nec},
\begin{align}
q_n = \dfrac{b_{n+1}}{b_n} = 2^{a_{n+1}-a_n} = 2^{g_n}. 
\end{align}
By Theorem~\ref{thm:nec:specR}(a), there is at most one index
\mbox{$n \in \mathbb{N}$}
such that $q_n=2$. Thus, $g_n=1$ for at most one index $n$,
and so $g_n >1$ for all but one $n \in \mathbb{N}$, because 
$\{a_n\}_{n=0}^\infty$ is
increasing. By 
Theorem~\ref{thm:nec:specR}(b), if $q_n=2$ for some $n \in \mathbb{N}$, 
then $q_{n+1} > 4$. Consequently, if $g_n=1$ for some $n \in \mathbb{N}$, 
then $g_{n+1}>2$.

Suppose that $g_n > 1$ for all but possibly one index $n \in \mathbb{N}$,
and if $g_n =1$ for some $n \in \mathbb{N}$, then $g_{n+1}>2$. Then 
$a_n^\prime=a_n-a_0+1$ satisfies the same conditions, because
$g_n^\prime=a_{n+1}^\prime -a_n^\prime =g_n$, and furthermore,
$a_0^\prime =1 >0$. Thus, by Theorem~\ref{thm:T2:main},
$\pi(R_{\underline{a}^\prime,2})=K_{\underline{a}^\prime,2}$ 
is quasi-convex in $\mathbb{T}$. We note that
for $\alpha = 2^{1-a_0}$, one has 
$\alpha R_{\underline{a},2} = R_{\underline{a}^\prime,2} 
\subseteq [-\frac 1 4, \frac 1 4]$. Therefore, by 
Corollary~\ref{cor:nec:pi-Y}, $R_{\underline{a},2}$ is quasi-convex in 
$\mathbb{R}$.
\end{proof}


\section{Sequences of the form $\boldsymbol{3^{-(a_n+1)}}$ in
$\boldsymbol{\mathbb{T}}$}

\label{sect:T3}

In this section, we present the proof of Theorem~\ref{thm:T3:main}.
Recall  that the Pontryagin dual $\widehat{\mathbb{T}}$ of 
$\mathbb{T}$ is $\mathbb{Z}$. For $k \in \mathbb{N}$, let 
$\eta_k\colon \mathbb{T} \rightarrow \mathbb{T}$ denote the continuous 
character defined by $\eta_k(x)=3^{k}\cdot x$. For $m \in \mathbb{N}$,
put $J_m = \{ k \in \mathbb{N} \mid m\eta_k \in 
K_{\underline a,3}^\triangleright \}$ and
$Q_m = \{m\eta_ k \mid k \in J_m\}^\triangleleft$.

\begin{lemma} \label{lemma:T3:J12}
$J_1 = J_2 = \mathbb{N} \backslash \underline{a}$.
\end{lemma}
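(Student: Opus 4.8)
The statement is a direct computation, so the plan is simply to evaluate the characters $m\eta_k$ on the nonzero points of $K_{\underline a,3}$ and read off exactly when the values stay inside $\mathbb{T}_+$. First I would record that under the identification $\widehat{\mathbb{T}}=\mathbb{Z}$ the character $m\eta_k$ is multiplication by $m3^k$, so that $(m\eta_k)(x_n)=m\,3^{\,k-a_n-1}\pmod{\mathbb{Z}}$. Since both $K_{\underline a,3}$ and $\mathbb{T}_+$ are symmetric and contain $0$, one has $m\eta_k\in K_{\underline a,3}^\triangleright$ if and only if $m\,3^{\,k-a_n-1}\in\mathbb{T}_+$ for every $n\in\mathbb{N}$ (reading $3^{\,k-a_n-1}$ in $\mathbb{T}$ via $\pi$). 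This reduces the problem to a single scalar condition for each pair $(k,n)$.

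Next I would fix $n$ and split into three cases according to the comparison of $k$ with $a_n$. If $k\ge a_n+1$, then the exponent $k-a_n-1$ is nonnegative, $m\,3^{\,k-a_n-1}$ is an integer, and its image in $\mathbb{T}$ is $0\in\mathbb{T}_+$. If $k\le a_n-1$, then writing $j=a_n+1-k\ge 2$ one has $m\,3^{-j}\le 2\cdot 3^{-2}=\tfrac 2 9<\tfrac 1 4$, so $m\,3^{\,k-a_n-1}\in\mathbb{T}_+$. Finally, if $k=a_n$, then $m\,3^{\,k-a_n-1}=m/3$, which equals $\tfrac 1 3$ for $m=1$ and $\tfrac 2 3\equiv-\tfrac 1 3$ for $m=2$; in both cases the distance to $0$ is $\tfrac 1 3>\tfrac 1 4$, so the value lies outside $\mathbb{T}_+$. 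Combining the three cases, for each $n$ the condition fails precisely when $k=a_n$, independently of whether $m=1$ or $m=2$. Hence $m\eta_k\in K_{\underline a,3}^\triangleright$ if and only if $k\ne a_n$ for all $n$, which is exactly $k\in\mathbb{N}\backslash\underline a$; this yields $J_1=J_2=\mathbb{N}\backslash\underline a$.

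There is no real obstruction here: all the content sits in the case $k=a_n$, and the only thing requiring care is the pair of numerical comparisons. What makes the lemma hold simultaneously for $m=1$ and $m=2$---and what distinguishes the prime $3$ from $2$---is that the largest value produced in the case $k\le a_n-1$ is $\tfrac 2 9$, still safely below $\tfrac 1 4$; with base $2$ the analogous value $2\cdot 2^{-2}=\tfrac 1 2$ would escape $\mathbb{T}_+$ and force $J_2\ne J_1$. Thus the crux of the verification is merely confirming $\tfrac 2 9<\tfrac 1 4$ together with $\tfrac 1 3,\tfrac 2 3\notin\mathbb{T}_+$.
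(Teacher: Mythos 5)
Your proof is correct and follows essentially the same route as the paper: compute $m\eta_k(x_n)=m\cdot 3^{k-a_n-1}$ and observe that for $m=1,2$ this lies in $\mathbb{T}_+$ exactly when the exponent is not $-1$, i.e.\ when $k\neq a_n$. You merely spell out the three cases (and the bound $\tfrac 2 9<\tfrac 1 4$) that the paper compresses into a single ``clearly'' sentence.
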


\begin{proof}
One has $m\eta_k(x_n) = m \cdot 3^{k-a_n-1}$. Clearly,
$3^{i} \in \mathbb{T}_+$ if and only if $i \neq -1$, and
$2\cdot 3^{i} \in \mathbb{T}_+$ if and only if $i \neq -1$.
Thus, for $m=1,2$, $m\eta_k(x_n) \in \mathbb{T}_+$ if and only if
$k-a_n-1 \neq -1$, or equivalently, $a_n \neq k$.
\end{proof}

\begin{theorem} \label{thm:T3:Q12}
If $g_n > 1$ for every $n \in \mathbb{N}$, then 
$Q_1 \cap Q_2 = \{
\sum\limits_{n=0}^\infty \varepsilon_n x_n \mid
(\forall n \in \mathbb{N}) (\varepsilon_n \in \{-1,0,1\})\}$.
\end{theorem}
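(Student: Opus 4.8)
The plan is to prove the two set inclusions separately. Write $C = \{\sum_{n=0}^\infty \varepsilon_n x_n \mid (\forall n)(\varepsilon_n \in \{-1,0,1\})\}$ for the ternary-type set on the right; note that since $x_n = 3^{-(a_n+1)}$ and the $a_n$ strictly increase with $g_n > 1$, the series converges in $\mathbb{T}$ and $C$ is a well-defined compact symmetric set containing $K_{\underline a,3}$. The two halves to establish are $C \subseteq Q_1 \cap Q_2$ and $Q_1 \cap Q_2 \subseteq C$. I expect the second (upper-bound) inclusion to be the main obstacle, since it requires extracting structural constraints from membership in both polars simultaneously.

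\textbf{The inclusion $C \subseteq Q_1 \cap Q_2$.} By Lemma~\ref{lemma:T3:J12}, $Q_m = \{m\eta_k \mid k \in \mathbb{N}\setminus \underline a\}^\triangleleft$, so I must show that for every $x = \sum \varepsilon_n x_n \in C$ and every $k \notin \underline a$, both $\eta_k(x) \in \mathbb{T}_+$ and $2\eta_k(x) \in \mathbb{T}_+$. First I would compute $\eta_k(x) = \sum_n \varepsilon_n 3^{k-a_n-1} \pmod 1$. The terms with $a_n < k$ (equivalently $k - a_n - 1 \geq 0$) are integers and vanish mod $1$; the terms with $a_n > k$ contribute a tail. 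Because $k \notin \underline a$ and the gaps satisfy $g_n > 1$, the exponents $k - a_n - 1$ for $a_n > k$ are all $\leq -2$, so $\eta_k(x)$ is a sum of terms each of absolute value at most $3^{-2}, 3^{-3}, \dots$ with coefficients in $\{-1,0,1\}$; summing the geometric bound gives $\|\eta_k(x)\| \leq \sum_{j \geq 2} 3^{-j} = \tfrac{1}{6} < \tfrac 14$, hence $\eta_k(x) \in \mathbb{T}_+$. The same estimate doubled gives $\|2\eta_k(x)\| \leq \tfrac 13 < \tfrac 14$? — here I must be careful: $\tfrac 13 > \tfrac 14$, so the crude bound fails for $m=2$, and the real point is that the leading tail term is $3^{-2}$, forcing $2\eta_k(x)$ into $\mathbb{T}_+$ only after using that $k \notin \underline a$ eliminates the dangerous exponent $-1$. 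I would therefore argue directly that the exponents avoid $-1$, so no term of size $2 \cdot 3^{-1}$ appears, and a sharper geometric estimate (or a direct congruence argument modulo the first surviving power of $3$) keeps $2\eta_k(x)$ away from $\pm\tfrac 14$.

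\textbf{The inclusion $Q_1 \cap Q_2 \subseteq C$.} This is the hard direction. Fix $x \in Q_1 \cap Q_2$; I want to produce digits $\varepsilon_n \in \{-1,0,1\}$ with $x = \sum \varepsilon_n x_n$. The natural approach is a greedy or inductive digit-extraction: lift $x$ to a representative in $(-\tfrac 12, \tfrac 12)$ via Remark~\ref{rem:nec:pi}, and determine the digits $\varepsilon_n$ one at a time by examining $3^{a_n+1} x \pmod 1$ and using the constraints imposed by membership in $Q_1$ and $Q_2$ (i.e.\ by all the characters $\eta_k$ and $2\eta_k$ with $k \notin \underline a$). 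The key mechanism is that for an index $k$ lying strictly between two consecutive $a_n$ values (which exists precisely because $g_n > 1$), the conditions $\eta_k(x) \in \mathbb{T}_+$ and $2\eta_k(x) \in \mathbb{T}_+$ together pin down the relevant partial sum of digits and forbid any ``carry'' that would push a coefficient outside $\{-1,0,1\}$. The main obstacle, and where I would spend the most care, is showing that these local constraints at the gap indices $k$ propagate consistently to force a global ternary representation with digits bounded by $1$ — in particular ruling out representations requiring a digit $\pm 2$, which is exactly the role played by the gap hypothesis $g_n > 1$ (so that every ``block'' has a free exponent $k \notin \underline a$ available to test against). I would organize this as an induction on $n$, at each stage peeling off $\varepsilon_n x_n$, verifying the tail $x - \sum_{j \le n}\varepsilon_j x_j$ still lies in the appropriate scaled polar, and passing to the limit using compactness of $C$ and the fact that $Q_1 \cap Q_2$ is closed.
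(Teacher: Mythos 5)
Your overall skeleton (two inclusions, Lemma~\ref{lemma:T3:J12}, evaluating $\eta_k$ on ternary-type expansions) matches the paper, but both halves have genuine gaps, and you have the role of the hypothesis $g_n>1$ exactly inverted. Consider first the inclusion $Q_1\cap Q_2\subseteq C$, which you call the hard one and to which you attribute the gap hypothesis. In the paper this inclusion is nearly immediate and uses \emph{no} gap hypothesis at all. The idea you are missing: every $y\in(-\tfrac{1}{2},\tfrac{1}{2}]$ already admits a balanced ternary representation $y=\sum_{i\ge1}c_i3^{-i}$ with $c_i\in\{-1,0,1\}$, and Lemma~\ref{lemma:T3:b1} shows that if $y\in\mathbb{T}_+$ and $2y\in\mathbb{T}_+$ then $c_1=0$ in \emph{every} such representation (since then $y\in\mathbb{T}_2=[-\tfrac{1}{8},\tfrac{1}{8}]$, while the tail is at most $\tfrac{1}{6}$). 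Taking an arbitrary balanced ternary expansion of $x$ and noting that the expansion of $\eta_k(x)$ is its $k$-shift, membership in $Q_1\cap Q_2$ forces $c_{k+1}=0$ for every $k\in\mathbb{N}\backslash\underline{a}$, so nonzero digits survive only at positions $a_n+1$. No digit extraction, no carries, no limit. Your inductive plan, by contrast, never supplies the mechanism it promises, and its limiting step is unsound as stated: compactness of $C$ plus closedness of $Q_1\cap Q_2$ cannot yield $x=\sum\varepsilon_nx_n$. For that you need the residuals $r_n=x-\sum_{j\le n}\varepsilon_jx_j$ to tend to $0$, and the only tests available (that $\|3^k r\|\le\tfrac{1}{8}$ for tested $k\notin\underline{a}$) can never force a residual to vanish, because nonzero elements of $C$ itself (e.g.\ any single $x_m$) pass all of them. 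The inductive invariant you would need is quantitative, of the form $\|r_n\|=O(3^{-a_{n+1}})$, and proving that it propagates is exactly the digit-killing lemma you have not formulated.

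Second, the direction you call easy, $C\subseteq Q_1\cap Q_2$, is precisely where $g_n>1$ is indispensable, and your treatment stops just short of the needed estimate. You correctly observe that the crude tail bound $\sum_{j\ge2}3^{-j}=\tfrac{1}{6}$ breaks down for $m=2$, but your proposed repairs do not close it: ``the exponents avoid $-1$'' is already built into the $\tfrac{1}{6}$ bound, and the ``sharper geometric estimate'' is left unnamed. The actual point is that $g_n>1$ makes the surviving exponents drop by at least $2$ from each term to the next, so the tail is at most $\sum_{i\ge0}3^{-2-2i}=\tfrac{1}{8}$; hence $\eta_k(x)\in\mathbb{T}_2$, and therefore both $\eta_k(x)$ and $2\eta_k(x)$ lie in $\mathbb{T}_+$. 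Without the gaps this half is simply false: for $a_n=n+1$ one has $\sum_n x_n=\tfrac{1}{6}\in C$ while $2\cdot\tfrac{1}{6}=\tfrac{1}{3}\notin\mathbb{T}_+$, so no argument that never invokes the gap structure (as yours, so far, does not) can succeed here. In summary: one half of your proposal is the paper's argument minus its key estimate, and the other half replaces a three-line argument by an unexecuted induction whose crucial step is missing.
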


In order to prove Theorem~\ref{thm:T3:Q12}, we rely on an auxiliary 
statement. We identify points of $\mathbb{T}$ with $(-1/2,1/2]$. Recall 
that every
$y \in (-1/2,1/2]$ can be written in the form
\begin{align} \label{eq:T3:repy}
y=\sum\limits_{i=1}^\infty \frac{c_i}{3^i} = 
\frac{c_1}{3} + \frac{c_2}{3^2} + \cdots + \frac{c_s}{3^s}+\cdots,
\end{align}
where $c_i\in\{-1,0,1\}$.

\begin{lemma} \label{lemma:T3:b1}
Let $y \in \mathbb{T}$. If $y \in \mathbb{T}_+$ and $2y \in \mathbb{T}_+$, 
then $c_1=0$ in every representation of $y$ in the form 
{\rm (\ref{eq:T3:repy})}.
\end{lemma}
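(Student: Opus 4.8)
The plan is to convert the two hypotheses into one sharp numerical bound on $|y|$ and then compare it against the lower bound on $|y|$ that a nonzero leading digit $c_1$ would force. Throughout I use the identification of $\mathbb{T}$ with $(-\tfrac{1}{2},\tfrac{1}{2}]$ fixed just before the statement, under which $\mathbb{T}_+ = [-\tfrac{1}{4},\tfrac{1}{4}]$ and $\mathbb{T}_2 = [-\tfrac{1}{8},\tfrac{1}{8}]$. First I would observe that the assumptions $y \in \mathbb{T}_+$ and $2y \in \mathbb{T}_+$ are together equivalent to $y \in \mathbb{T}_2$, i.e.\ $|y| \le \tfrac{1}{8}$. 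The only point requiring care here is that the doubling map introduces no wrap-around on the relevant range: since $y \in [-\tfrac{1}{4},\tfrac{1}{4}]$ forces $2y \in [-\tfrac{1}{2},\tfrac{1}{2}]$, the condition $2y \in \mathbb{T}_+$ simply amounts to $2y \in [-\tfrac{1}{4},\tfrac{1}{4}]$, that is, $|y| \le \tfrac{1}{8}$. (Equivalently, one may invoke the characterization $\mathbb{T}_2 = \{1,2\}^\triangleleft$ already used in Section~\ref{sect:T2R2}.)

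Next I would estimate the tail of an \emph{arbitrary} representation of the form (\ref{eq:T3:repy}). For any digits $c_i \in \{-1,0,1\}$ with $i \ge 2$, a geometric series bound gives
\begin{align}
\Bigl| \sum_{i=2}^\infty \frac{c_i}{3^i} \Bigr| \le \sum_{i=2}^\infty \frac{1}{3^i} = \frac{1}{6}.
\end{align}
Consequently, if some representation of $y$ had $c_1 = 1$, then $y \ge \tfrac{1}{3} - \tfrac{1}{6} = \tfrac{1}{6}$, while $c_1 = -1$ would symmetrically give $y \le -\tfrac{1}{6}$; in either case $|y| \ge \tfrac{1}{6} > \tfrac{1}{8}$, contradicting $y \in \mathbb{T}_2$. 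Hence no representation can have $c_1 \neq 0$, which is exactly the claim.

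The argument reduces to the single inequality $\tfrac{1}{6} > \tfrac{1}{8}$, so I do not expect any genuine obstacle; the delicate part is purely bookkeeping—pinning down the identification of $\mathbb{T}$ with $(-\tfrac{1}{2},\tfrac{1}{2}]$ so that $\mathbb{T}_+$ and $\mathbb{T}_2$ become the concrete intervals above, and checking that doubling stays within $(-\tfrac{1}{2},\tfrac{1}{2}]$ so that $2y \in \mathbb{T}_+$ really means $|y| \le \tfrac{1}{8}$. Because the assertion is universally quantified over representations, I would phrase the conclusion as a contradiction: any representation with $c_1 \neq 0$ forces $|y| \ge \tfrac{1}{6}$, incompatible with the hypotheses, so every representation must satisfy $c_1 = 0$.
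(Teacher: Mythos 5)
Your proof is correct and follows essentially the same route as the paper's: the same geometric-series tail bound $\bigl|\sum_{i=2}^\infty c_i/3^i\bigr| \le \tfrac{1}{6}$, the same reduction of the two hypotheses to $y \in \mathbb{T}_2 = [-\tfrac{1}{8},\tfrac{1}{8}]$, and the same decisive comparison (the paper writes it as $\tfrac{|c_1|}{3} \le \tfrac{1}{8}+\tfrac{1}{6} = \tfrac{7}{24} < \tfrac{1}{3}$, which is your inequality $\tfrac{1}{6} > \tfrac{1}{8}$ rearranged). Your extra care about wrap-around under doubling is a harmless elaboration of a step the paper states without comment.
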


\begin{proof}
Since $c_i \in \{-1,0,1\}$, one has
\begin{align}
\left|\sum\limits_{i=2}^\infty \dfrac{c_i}{3^i} \right| \leq
\sum\limits_{i=2}^\infty \dfrac{1}{3^i} = 
\dfrac{1}{9} \cdot \dfrac{3}{2} = \dfrac{1}{6}.
\end{align}
Thus,
\begin{align}
|y| \geq \left| \dfrac{c_1}{3} \right| - 
\left|\sum\limits_{i=2}^\infty \dfrac{c_i}{3^i} \right| \geq
\dfrac{|c_1|}{3} - \dfrac{1}{6}.
\end{align}
Since $y, 2y \in \mathbb{T}_+$, one has 
$y \in \mathbb{T}_2 =[-\frac 1 8, \frac 1 8]$. Therefore,
\begin{align}
\dfrac{|c_1|}{3} \leq |y| + \dfrac 1 6 \leq \dfrac 1 8 + \dfrac 1 6 = 
\dfrac{7}{24} < \dfrac 1 3.
\end{align}
Hence, $c_1=0$, as desired.
\end{proof}

\begin{proof}[Proof of Theorem~\ref{thm:T3:Q12}.] ($\subseteq$)
Let  $x=\sum\limits_{i=1}^\infty \dfrac{c_i}{3^i}$
be a representation of $x\in \mathbb{T}$ in the form (\ref{eq:T3:repy}). 
Then for every $k\in \mathbb{N}$, one has
\begin{align} \label{eq:T3:etak}
\eta_{k}(x) = 3^k  x = 
\sum\limits_{i=1}^\infty \dfrac{c_i}{3^{i-k}} \equiv_1 
\sum\limits_{i=k+1}^\infty \dfrac{c_i}{3^{i-k}} = 
\sum\limits_{i=1}^\infty \dfrac{c_{k+i}}{3^i}.
\end{align}
If $x \in Q_1 \cap Q_2$, then for every $k \in J_1=J_2$, the 
element $y=\eta_k(x)$ satisfies $y \in \mathbb{T}_+$ and
$2y \in \mathbb{T}_+$. Thus, by Lemma~\ref{lemma:T3:b1}, the coefficient 
of $\frac 1 3$ in (\ref{eq:T3:etak}) is zero, that is, $c_{k+1}=0$.
Since $J_1 = J_2 = \mathbb{N}\backslash \underline a$ holds by 
Lemma~\ref{lemma:T3:J12}, we have shown that
if $c_i\neq 0$, then $i-1 \not \in \mathbb{N}\backslash \underline a$, 
so $i -1 \in \underline a$, and therefore $i = a_n +1$ for some 
$n \in\mathbb{N}$. Hence, $x$ has the form
\begin{align}
x= \sum\limits_{n=0}^\infty \dfrac{c_{a_n+1}}{3^{a_n+1}} = 
\sum\limits_{n=0}^\infty \varepsilon_n x_n,
\end{align}
where $\varepsilon_n = c_{a_n+1} \in \{-1,0,1\}$.

($\supseteq$) Let $k \in J_1 = J_2 = \mathbb{N}\backslash \underline a$
(see Lemma~\ref{lemma:T3:J12}), 
and $x=\sum\limits_{n=0}^\infty \varepsilon_n x_n$. Then
$\eta_k(x) = \sum\limits_{n=0}^\infty \varepsilon_n \eta_k(x_n)$.
Let $n_0$ be the first index such that 
$\eta_{k}(z_{n_0}) \not \equiv_1 0$. Then $k < a_{n_0} +1$, and since
$k \not \in \underline a$, one has $k< a_{n_0}$. Consequently,
$\eta_{k}(z_{n_0}) \leq \frac 1 9$. Thus,
$\eta_{k}(z_{n_0+i}) \leq \frac{1}{9^{i+1}}$, because
$g_n >1$ implies that  $a_{n_0+i} - a_{n_0} \geq 2i$. Therefore,
\begin{align}
\left|\sum\limits_{n=n_0}^\infty \varepsilon_n \eta_k(x_n) \right| \leq
\sum\limits_{n=n_0}^\infty | \eta_k(x_n)| = 
\sum\limits_{i=0}^\infty | \eta_k(z_{{n_0}+i})| \leq
\sum\limits_{i=0}^\infty \dfrac 1 {9^{i+1}} = 
\dfrac 1 9 \cdot \dfrac{9}{8} = \dfrac{1}{8}.
\end{align}
So, 
\begin{align}
\eta_k(x) =
\sum\limits_{n=0}^\infty \varepsilon_n \eta_k(x_n)\equiv_1
\sum\limits_{n=n_0}^\infty \varepsilon_n \eta_k(x_n) \in \mathbb{T}_2.
\end{align}
Hence, $\eta_k(x) \in \mathbb{T}_+$ and  $2\eta_k(x) \in \mathbb{T}_+$, as 
desired.
\end{proof}

\begin{lemma} \label{lemma:T3:shift}
If $g_n > 1$ for all $n \in \mathbb{N}$, $g_0 > 0$, and
$0\leq k<l$ in $\mathbb{N}$, then $a_k-1 \in J_m$ for 
$m=3^{a_l-a_k}\pm 2$,  that is, 
$m \eta_{a_k-1}\in K_{\underline a,3}^\triangleright$.
\end{lemma}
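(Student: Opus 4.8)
The plan is to verify the polar membership $m\eta_{a_k-1}\in K_{\underline a,3}^\triangleright$ directly, by evaluating the character $m\eta_{a_k-1}$ at every point of $K_{\underline a,3}$ and checking that each value lands in $\mathbb{T}_+$. Since $K_{\underline a,3}$ is symmetric and contains $0$ (which maps to $0\in\mathbb{T}_+$), it suffices to show $m\eta_{a_k-1}(x_n)\in\mathbb{T}_+$ for every $n\in\mathbb{N}$. Writing $d=a_l-a_k$ — note $d\geq 2$, since $k<l$ and every gap satisfies $g_i\geq 2$ — a direct substitution of $m=3^{d}\pm 2$ gives
\begin{align}
m\eta_{a_k-1}(x_n)=(3^{d}\pm 2)\,3^{\,a_k-1}\cdot 3^{-(a_n+1)}
=3^{\,a_l-a_n-2}\pm 2\cdot 3^{\,a_k-a_n-2}\pmod 1 .
\end{align}

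First I would use the elementary fact that any power $3^{j}$ with $j\geq 0$ is $\equiv_1 0$, so each of the two summands contributes nontrivially only when its exponent is negative. Because every gap is at least $2$, the surviving indices are easy to locate: the inequality $a_l-a_n-2<0$ is equivalent to $n\geq l$, and $a_k-a_n-2<0$ is equivalent to $n\geq k$. (For instance $a_{k-1}\leq a_k-2$, so $a_n\geq a_k-1$ forces $n\geq k$.) This splits the verification into a handful of regimes. For $n<k$ both summands vanish mod $1$, so the value is $0\in\mathbb{T}_+$.

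The two tight cases are $n=k$ and $n=l$. For $n=k$ the first summand is $3^{\,d-2}\equiv_1 0$ (as $d\geq 2$), leaving the value $\pm 2\cdot 3^{-2}=\pm\tfrac 2 9$; since $\tfrac 2 9<\tfrac 1 4$, this lies in $\mathbb{T}_+$. For $n=l$ the first summand is $3^{-2}=\tfrac 1 9$ while the second has exponent $-(d+2)\leq -4$, so its absolute value is at most $2\cdot 3^{-4}=\tfrac 2{81}$, and the value lies in $\tfrac 1 9+[-\tfrac 2{81},\tfrac 2{81}]\subseteq\mathbb{T}_+$. In the remaining regimes every surviving summand has exponent at most $-4$: for $k<n<l$ only the second summand survives and is bounded by $2\cdot 3^{-4}<\tfrac 1 4$, while for $n>l$ both survive but with exponents $\leq -4$, so the combined absolute value is at most $3^{-4}+2\cdot 3^{-6}<\tfrac 1 4$. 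In every case the value lies in $\mathbb{T}_+$, which completes the verification.

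The argument is a direct estimate, so there is no deep obstacle; the one point demanding care is the case $n=k$, where the value equals exactly $\pm\tfrac 2 9$ and sits just below the boundary $\tfrac 1 4$ of $\mathbb{T}_+$. This is precisely where the coefficient $2$ in $m=3^{d}\pm 2$ is indispensable: replacing it by $3$ would give $\pm\tfrac 1 3\notin\mathbb{T}_+$ and destroy the conclusion. The hypothesis $g_n>1$ for all $n$ is used in exactly the same spirit, to guarantee that outside the distinguished indices $n=k$ and $n=l$ any surviving summand carries exponent $\leq -4$ and is therefore harmlessly small.
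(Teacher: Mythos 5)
Your proof is correct and follows essentially the same route as the paper's: both expand $m\eta_{a_k-1}(x_n)=3^{a_l-a_n-2}\pm 2\cdot 3^{a_k-a_n-2}$ and run a case analysis on the position of $n$ relative to $k$ and $l$, using the gap condition $g_n\geq 2$ to kill the integer terms mod $1$ and to bound the surviving terms by $\tfrac29$, $\tfrac2{81}$, and $\tfrac19+\tfrac2{81}=\tfrac{11}{81}<\tfrac14$ (the paper merely groups your cases $n=k$, $k<n<l$ into one range $k\leq n<l$, and $n=l$, $n>l$ into $l\leq n$). The only point neither you nor the paper's proof makes explicit is that $\eta_{a_k-1}$ is defined only when $a_k\geq 1$, which is exactly why the lemma carries the hypothesis that the paper's subsequent remark identifies as $a_0>0$.
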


\begin{proof}
Let $\chi=m\eta_{a_k-1}$ and $n\in \mathbb{N}$. If $n <k$, then
$a_n + 2 \leq a_k$ (as $g_n >1$), and so
\begin{align}
\chi(x_n) = m \eta_{a_k -1}(x_n) = m\cdot {3^{a_k -a_n-2}} \equiv_1 0.
\end{align}
If $k \leq n < l$, then $a_n+2 \leq a_l$ (since $g_n >1$), so
$a_n -a_k +2 \leq a_l -a_k$, and thus
\begin{align}
\chi(x_n) = m \cdot 3^{a_k-a_n-2} = 
\dfrac{m}{3^{a_n -a_k+2}} = 
\dfrac{3^{a_l-a_k}\pm 2}{3^{a_n -a_k+2}} \equiv_1
\pm \dfrac{2}{3^{a_n -a_k+2}} \in \mathbb{T}_+,
\end{align}
because $a_n-a_k+2 \geq 2$. Finally, if $l \leq n$, then
\begin{align}
\chi(x_n) = \dfrac{3^{a_l-a_k}\pm 2}{3^{a_n -a_k+2}} = 
\dfrac{1}{3^{a_n -a_l+2}}
\pm \dfrac{2}{3^{a_n -a_k+2}}.
\end{align}
Since $k<n$ and $g_n >1$, one has $a_k -a_n \geq 2$, and so
$\dfrac{2}{3^{a_n -a_k+2}} \in [0,\frac 2 {81}]$. Furthermore, $n \geq l$ 
implies that $a_n \geq a_l$, and so 
$\frac{1}{a_n -a_l+2} \in [0,\frac 1 9]$. Therefore,
$\chi(x_n)\in [0,\frac{11}{81}] \subseteq \mathbb{T}_+$.  This show that
$\chi=m\eta_{a_k-1} \in K^\triangleright_{\underline a, 3}$.
\end{proof}

\begin{remark}
Lemma~\ref{lemma:T3:shift} heavily depends on the assumption that 
$a_0 > 0$. Indeed, if $a_0=0$, then $\eta_{a_0-1}=\eta_{-1}$ is not
defined, and thus Lemma~\ref{lemma:T3:shift} makes no sense (and so fails) 
for $k=0$.
\end{remark}

\begin{proof}[Proof of Theorem~\ref{thm:T3:main}.]
Suppose that $K_{\underline{a}, 3}$ is quasi-convex, and 
put $b_n=3^{a_n+1}$.  Using the notations of Section~\ref{sect:nec},
\begin{align}
q_n = \dfrac{b_{n+1}}{b_n} = 3^{a_{n+1}-a_n} = 3^{g_n}. 
\end{align}
By Theorem~\ref{thm:nec:gen}(a), \mbox{$b_0 \geq 4$}, and thus \mbox{$a_0 >0$}.
Since $4$ does not divide $b_{n+1}$, by 
Theorem~\ref{thm:nec:div3}, $q_n \neq 3$. 
Consequently, $g_n \neq 1$.
Hence, $g_n > 1$ for every \mbox{$n\in\mathbb{N}$}, because 
$\{a_n\}_{n=0}^\infty$ is increasing.

Conversely, suppose that $a_0 >0$ and $g_n > 1$ for every $n\in 
\mathbb{N}$, and let 
$x \in Q_{\mathbb{T}}(K_{\underline{a},3})\backslash\{0\}$. 
We show that $x \in K_{\underline{a},3}$. By
Theorem~\ref{thm:T3:Q12}, $x$ can be expressed in the form
$x=\sum\limits_{n=0}^N \varepsilon_n x_n$, where
$N \in \mathbb{N}  \cup\{\infty\}$, and
$\varepsilon_{n_i} \in \{-1,1\}$ and $n_i < n_{i+1}$ for every $i$,
because $Q_{\mathbb{T}}(K_{\underline{a}}) \subseteq Q_1 \cap Q_2$.
Assume that $N >0$. By replacing $x$ with $-x$ if necessary, we may assume 
that $\varepsilon_{n_0}=1$. We put $\rho=\varepsilon_{n_1}$, so that
$x=x_{n_0}+\rho x_{n_1} + x^\prime$, where
\begin{align}
x^\prime = \begin{cases}
\sum\limits_{i=2}^N \varepsilon_{n_i} x_{n_i} & \text{if $N>1$} \\
0 & \text{if $N=1$}.
\end{cases}
\end{align}
Put \mbox{$k=n_0$}, \mbox{$l=n_1$}, 
\mbox{$m=3^{a_l-a_k}+ 2\rho$},
and \mbox{$\chi=m\eta_{a_k-1}$}.
By Lemma~\ref{lemma:T3:shift}, 
\mbox{$\chi\in K^\vartriangleright_{\underline a,3}$}.
Since $k< l$ and $g_k > 1$, one has that $a_l - a_k \geq 2$. Thus, one 
obtains that 
\begin{align}
\chi(x_{n_0}) & = (3^{a_l-a_k}+ 2\rho)\eta_{a_k-1}(x_k) = 
(3^{a_l-a_k}+ 2\rho) \dfrac{1}{9} = 3^{a_l-a_k -2} + \dfrac {2\rho} 9 
\equiv_1 \dfrac {2\rho} 9 \\
\chi(x_{n_1}) & = (3^{a_l-a_k}+ 2\rho)\eta_{a_k-1}(x_l) =
(3^{a_l-a_k}+ 2\rho) \cdot 3^{a_k-a_l -2}  = 
\dfrac 1 9 + \dfrac{2\rho}{3^{a_l-a_k+2}}.
\end{align}
Therefore,
\begin{align} \label{eq:T3:chi-x}
\chi(x) = \chi(x_{n_0})+\rho\chi (x_{n_1}) + \chi(x^\prime) = 
\frac \rho 3 + \dfrac{2}{3^{a_l -a_k +2}} + \chi(x^\prime).
\end{align}
We claim that 
\begin{align} \label{eq:T3:chi-xprime}
|\chi(x^\prime)|\leq\dfrac{11}{8\cdot 3^4}. 
\end{align}
If $N=1$, then $x^\prime =0$, and (\ref{eq:T3:chi-xprime}) holds 
trivially, so we may assume 
that $N > 1$. Put $s=n_2$. Since $g_n >1$ for all $n\in \mathbb{N}$,
one has $a_{s+j} -a_s \geq 2j$, and thus
\begin{align}
\eta_{a_k-1}(x_{s+j}) = 3^{a_k - a_{s+j} -2} \leq
3^{a_k - a_{s} -2j -2}
\end{align}
for all $i \in \mathbb{N}$. Consequently, 
\begin{align}
|\chi(x^\prime)| & \leq \sum\limits_{i=2}^N |\chi(x_{n_i})| 
\leq \sum\limits_{j=s}^\infty |\chi(x_{j})| =
\sum\limits_{j=0}^\infty |\chi(x_{j+s})| = 
m \left(\sum\limits_{j=0}^\infty |\eta_{a_k-1}(x_{j+s})|\right) \\
& \leq 
m \left( \sum\limits_{j=0}^\infty  3^{a_k - a_{s} -2j -2} \right)
\leq \dfrac{m}{3^{a_s-a_k +2}}
\left(\sum\limits_{j=0}^\infty \dfrac{1}{3^{2j}} \right) = 
\dfrac{m}{3^{a_s-a_k +2}} \cdot \frac 9 8  = 
\dfrac{m}{8\cdot 3^{a_s-a_k}}, 
\end{align}
because $s=n_2 \leq n_i < n_{i+1}$ for all $i \in \mathbb{N}$.
Since $k < l < s$, $g_k > 1$, and $g_l > 1$, one has that
$a_s-a_l \geq 2$ and $a_s -a_k \geq 4$. Therefore,
\begin{align}
|\chi(x^\prime)| & \leq \dfrac{m}{8\cdot 3^{a_s-a_k}} 
\leq \dfrac{3^{a_l-a_k}+2}{8\cdot 3^{a_s-a_k}}=
\frac 1 8 \left(\dfrac{1}{3^{a_s -a _l}} 
+ \dfrac{2}{3^{a_s -a_k}} \right) \leq
\frac 1 8 \left(\frac{1}{3^2} + \frac{2}{3^4}  \right) = 
\dfrac{11}{8 \cdot 3^4},
\end{align}
as required. Hence, using the fact that $a_l -a_k \geq 2$, we obtain that
\begin{align}
\dfrac{2}{3^{a_l-a_k+2}} + |\chi(x^\prime)| \leq 
\dfrac{2}{3^4} + \dfrac{11}{8 \cdot 3^4} = 
\dfrac{27}{8\cdot 3^4} = \dfrac{1}{24}.
\end{align}
By (\ref{eq:T3:chi-x}), this implies that 
\mbox{$\chi(x) \not\in \mathbb{T}_+$},
contrary to the assumption that 
\mbox{$x\in Q_{\mathbb{T}}(K_{\underline a,3})$}.
Hence, $N=0$, and $x=x_{n_0} \in K_{\underline a,3}$, as desired. 
\end{proof}


\section{Sequences of the form $\boldsymbol{3^{a_n}}$ in 
$\boldsymbol{\mathbb{J}_3}$}

\label{sect:J3}

In this section, we present the proof of Theorem~\ref{thm:J3:main}. Recall 
that the Pontryagin dual $\widehat {\mathbb{J}_3}$ of $\mathbb{J}_3$ is 
the Pr\"ufer group $\mathbb{Z}(3^\infty)$.
For $k \in \mathbb{N}$, let $\zeta_k\colon \mathbb{J}_3 \rightarrow 
\mathbb{T}$ denote the continuous character defined by 
$\zeta_k(1)=3^{-(k+1)}$. For $m \in \mathbb{N}$,
put $J_m = \{ k \in \mathbb{N} \mid m\zeta_k \in 
L_{\underline a,3}^\triangleright \}$ and
$Q_m = \{m\zeta_ k \mid k \in J_m\}^\triangleleft$.

\begin{lemma} \label{lemma:J3:J12}
$J_1 = J_2 = \mathbb{N} \backslash \underline{a}$.
\end{lemma}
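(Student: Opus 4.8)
The plan is to compute $m\zeta_k(y_n)$ directly and determine for which $k$ it lands in $\mathbb{T}_+$ for all $n$, which is exactly the condition $m\zeta_k \in L_{\underline a,3}^\triangleright$. First I would record the basic evaluation: since $y_n = 3^{a_n}$ and $\zeta_k(1) = 3^{-(k+1)}$, linearity of the character gives $\zeta_k(y_n) = 3^{a_n}\zeta_k(1) = 3^{a_n - k - 1}$, hence $m\zeta_k(y_n) = m\cdot 3^{a_n - k - 1}$ in $\mathbb{T}$. This mirrors exactly the computation $m\eta_k(x_n) = m\cdot 3^{k - a_n - 1}$ in the proof of Lemma~\ref{lemma:T3:J12}, with the sign of the exponent flipped (here the exponent is $a_n - k - 1$ rather than $k - a_n - 1$), reflecting the dual pairing $\mathbb{J}_3 \leftrightarrow \mathbb{Z}(3^\infty)$ versus $\mathbb{T} \leftrightarrow \mathbb{Z}$.

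Next I would analyze when $m\cdot 3^{a_n - k - 1} \in \mathbb{T}_+$ for $m = 1$ and $m = 2$, identifying $\mathbb{T}$ with $(-1/2, 1/2]$ and reading $3^i$ as the appropriate coset. When $a_n - k - 1 \geq 0$ the quantity $3^{a_n-k-1}$ is an integer, so both $3^{a_n-k-1}$ and $2\cdot 3^{a_n-k-1}$ are $\equiv_1 0 \in \mathbb{T}_+$. When $a_n - k - 1 = -1$, i.e. $k = a_n$, we get $3^{-1} = \tfrac13 \notin \mathbb{T}_+$ (since $\tfrac13 > \tfrac14$), and likewise $2\cdot 3^{-1} = \tfrac23 \equiv_1 -\tfrac13 \notin \mathbb{T}_+$. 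When $a_n - k - 1 \leq -2$, the value $3^{a_n-k-1} \leq \tfrac19$ lies well inside $\mathbb{T}_+ = [-\tfrac14,\tfrac14]$, and so does twice it, $2\cdot 3^{a_n-k-1}\le \tfrac29 < \tfrac14$. So for both $m=1,2$, the condition $m\cdot 3^{a_n-k-1}\in\mathbb{T}_+$ holds precisely when $a_n \neq k$, exactly as in the circle case: the only obstruction occurs at $k = a_n$.

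From this I would conclude $k \in J_m$ (for $m=1,2$) if and only if $m\zeta_k(y_n)\in\mathbb{T}_+$ for \emph{all} $n\in\mathbb{N}$, which by the above is equivalent to $k \neq a_n$ for every $n$, i.e. $k \in \mathbb{N}\backslash\underline{a}$. This gives $J_1 = J_2 = \mathbb{N}\backslash\underline{a}$. I do not expect a genuine obstacle here; the computation is essentially routine and parallels Lemma~\ref{lemma:T3:J12} almost verbatim. The only point requiring minor care is the bookkeeping at $a_n - k - 1 = -1$ (verifying both $\tfrac13$ and $\tfrac23 \equiv_1 -\tfrac13$ escape $\mathbb{T}_+$) and confirming the $\leq -2$ tail stays inside $\mathbb{T}_+$ for both $m=1$ and $m=2$; these are the same three-case checks ($\geq 0$, $=-1$, $\leq -2$ in the exponent) that drive the circle argument. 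Notably this lemma does not need any gap hypothesis $g_n > 1$, since the characterization of $J_m$ depends only on the values $\{a_n\}$ as a set, not on their spacing.
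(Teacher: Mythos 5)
Your proof is correct and takes essentially the same approach as the paper: both compute $m\zeta_k(y_n) = m\cdot 3^{a_n-k-1}$ and observe that for $m=1,2$ this lies in $\mathbb{T}_+$ precisely when the exponent is not $-1$, i.e.\ when $k \neq a_n$. Your explicit three-case check on the exponent simply fills in what the paper dismisses with ``clearly,'' and your remark that no gap hypothesis $g_n>1$ is needed matches the paper, whose proof likewise uses none.
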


\begin{proof}
One has $m\zeta_k(y_n) = m \cdot 3^{a_n-k-1}$. Clearly,
$3^{i} \in \mathbb{T}_+$ if and only if $i \neq -1$, and
$2\cdot 3^{i} \in \mathbb{T}_+$ if and only if $i \neq -1$.
Thus, for $m=1,2$, $m\zeta_k(y_n) \in \mathbb{T}_+$ if and only if
$a_n-k-1 \neq -1$, or equivalently, $a_n \neq k$.
\end{proof}

\begin{theorem} \label{thm:J3:Q12}
If $g_n > 1$ for every $n \in \mathbb{N}$, then $Q_1 \cap Q_2 = \{
\sum\limits_{n=0}^\infty \varepsilon_n y_n \mid
(\forall n \in \mathbb{N}) (\varepsilon_n \in \{-1,0,1\})\}$.
\end{theorem}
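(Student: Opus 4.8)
The plan is to mirror the structure of the proof of Theorem~\ref{thm:T3:Q12}, which establishes the identical-looking statement in $\mathbb{T}$, but adapting the arithmetic to the $3$-adic setting. The set $Q_1 \cap Q_2$ is, by definition, the bipolar-type intersection determined by the characters $m\zeta_k$ for $m\in\{1,2\}$ and $k\in J_m$. By Lemma~\ref{lemma:J3:J12}, $J_1 = J_2 = \mathbb{N}\backslash\underline{a}$, so the constraints defining $Q_1\cap Q_2$ are exactly: for every $k\notin\underline{a}$, both $\zeta_k(z)\in\mathbb{T}_+$ and $2\zeta_k(z)\in\mathbb{T}_+$. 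First I would prove the inclusion $(\supseteq)$. Given $z=\sum_{n=0}^\infty \varepsilon_n y_n$ with $\varepsilon_n\in\{-1,0,1\}$, I would compute $\zeta_k(z)=\sum_n \varepsilon_n 3^{a_n-k-1}$ and show it lands in $\mathbb{T}_2=[-\tfrac18,\tfrac18]$ whenever $k\notin\underline{a}$. The key point is that since $k\notin\underline{a}$, the smallest index $n_0$ with $a_{n_0}>k$ actually satisfies $a_{n_0}\geq k+1$, but $k\neq a_{n_0}-1$ would need care; rather, the first nonzero dyadic-type contribution has $a_n-k-1\geq 0$ contributing an integer ($\equiv_1 0$), and the first genuinely fractional term has exponent $\leq -1$ only when $a_n = k$, which is excluded. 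Using $g_n>1$ to get geometric decay $|\zeta_k(y_{n_0+i})|\leq 3^{-(2i+1)}$, the tail sums to at most $\tfrac18$, exactly as in Theorem~\ref{thm:T3:Q12}.

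For the reverse inclusion $(\subseteq)$, I would need the $3$-adic analogue of the representation~(\ref{eq:T3:repy}) together with Lemma~\ref{lemma:T3:b1}. Every $z\in\mathbb{J}_3$ has a canonical expansion $z=\sum_{i=0}^\infty d_i 3^i$ with $d_i\in\{0,1,2\}$, or better for this purpose, a balanced expansion with digits in $\{-1,0,1\}$. Applying $\zeta_k$ shifts this expansion so that $\zeta_k(z)\equiv_1 \sum_{i} c_i 3^{?}$ with the leading fractional coefficient controlled by the digit of $z$ at position $k$. The analogue of Lemma~\ref{lemma:T3:b1} says: if $\zeta_k(z)\in\mathbb{T}_+$ and $2\zeta_k(z)\in\mathbb{T}_+$, then the coefficient of $\tfrac13$ in the balanced representation of $\zeta_k(z)$ vanishes, forcing the corresponding digit $c_{k}$ (or $c_{k+1}$, depending on indexing) of $z$ to be zero. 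Running this over all $k\notin\underline{a}$ kills every balanced digit of $z$ except those at positions in $\underline{a}$, yielding $z=\sum_{n=0}^\infty \varepsilon_n y_n$ with $\varepsilon_n\in\{-1,0,1\}$.

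The main obstacle I anticipate is setting up the correct $3$-adic balanced expansion and verifying that $\zeta_k$ interacts with it the way $\eta_k$ does with~(\ref{eq:T3:repy}). In $\mathbb{T}$ the character $\eta_k$ multiplies by $3^k$ and truncates the finite-index part to an integer; in $\mathbb{J}_3$ the character $\zeta_k$ sends $1\mapsto 3^{-(k+1)}$, so $\zeta_k(\sum d_i 3^i) = \sum d_i 3^{i-k-1}$, and the terms with $i\geq k+1$ contribute integers (hence $\equiv_1 0$) while the terms with $i\leq k$ contribute the fractional part. This is the mirror image of the $\mathbb{T}$ situation (low-order digits matter here rather than high-order ones), so I would need to re-prove the analogue of Lemma~\ref{lemma:T3:b1} with the roles appropriately reversed, controlling $|\sum_{i<k} d_i 3^{i-k-1}|$ to isolate the digit $d_k$. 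Once the bookkeeping of which digit gets constrained by $\zeta_k$ is pinned down correctly, the estimates are the same geometric-series bounds as before, and the two inclusions close the argument.
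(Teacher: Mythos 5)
Your proposal follows essentially the same route as the paper's proof: the balanced expansion $z=\sum_{i=0}^\infty c_i 3^i$ with $c_i\in\{-1,0,1\}$, the observation that $\zeta_k$ turns the digit of $z$ at position $k$ into the coefficient of $\tfrac 1 3$, and, for the reverse inclusion, the same geometric bound (largest fractional term at most $\tfrac 1 9$ because $k\notin\underline a$, with decay $9^{-(i+1)}$ toward \emph{smaller} indices coming from $g_n>1$, summing to $\tfrac 1 8$). The one simplification you did not notice is that no ``reversed'' analogue of Lemma~\ref{lemma:T3:b1} needs to be proved: since $\zeta_k(z)\equiv_1 \sum_{i=1}^{k+1} c_{k-i+1}/3^i$ is itself a representation of an element of $\mathbb{T}$ in the form~(\ref{eq:T3:repy}), the lemma applies verbatim and yields $c_k=0$ directly.
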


\begin{proof}
Recall that every element $x \in \mathbb{J}_3$ can be written in the form
$x= \sum\limits_{i=0}^\infty c_i \cdot 3^i$, 
where $c_i \in \{-1,0,1\}$. For $x$ represented in this form, one has
\begin{align} \label{eq:J3:zetak}
\zeta_k(x) = \sum\limits_{i=0}^\infty c_i \cdot 3^{i-k-1} \equiv_1
\sum\limits_{i=0}^k c_i \cdot 3^{i-k-1} = 
\sum\limits_{i=1}^{k+1} \frac{c_{k-i+1}}{3^{i}}.
\end{align}
($\subseteq$)
If $x \in Q_1 \cap Q_2$, then for every $k \in J_1=J_2$, the 
element $y=\zeta_k(x)$ satisfies $y \in \mathbb{T}_+$ and
$2y \in \mathbb{T}_+$. Thus, by Lemma~\ref{lemma:T3:b1}, the coefficient 
of $\frac 1 3$ in (\ref{eq:J3:zetak}) is zero, that is, $c_{k}=0$.
Since $J_1 = J_2 = \mathbb{N}\backslash \underline a$ holds
by Lemma~\ref{lemma:J3:J12}, we have shown that
if $c_i\neq 0$, then $i \not \in \mathbb{N}\backslash \underline a$, 
so $i \in \underline a$, and therefore $i = a_n$ for some 
$n \in\mathbb{N}$. Hence, $x$ has the form
\begin{align}
x= \sum\limits_{n=0}^\infty c_{a_n} \cdot 3^{a_n} = 
\sum\limits_{n=0}^\infty \varepsilon_n y_n,
\end{align}
where $\varepsilon_n = c_{a_n} \in \{-1,0,1\}$.

($\supseteq$) Let $k \in J_1 = J_2 = \mathbb{N}\backslash \underline a$
(see Lemma~\ref{lemma:J3:J12}), 
and $x=\sum\limits_{n=0}^\infty \varepsilon_n y_n$. Let $n_0$ denote the 
largest index such that $\zeta_k(y_{n_0}) \not \equiv_1 0$.
Then $a_{n_0} \leq k$, and since $k \not \in \underline a$,
one has $a_{n_0} < k$. Consequently, 
$\zeta_k(y_{n_0}) \leq \frac 1 9$. Thus,
$\zeta_k(y_{n_0 -i}) \leq \frac 1 {9^{i+1}}$, because 
$g_n > 1$ implies that $a_{n_0} - a_{n_0 -i} \geq 2i$. Therefore,
\begin{align}
\left|\sum\limits_{n=0}^{n_0} 
\varepsilon_n \zeta_k(y_n)\right| \leq
\sum\limits_{n=0}^{n_0} |\zeta_k(y_n)| = 
\sum\limits_{i=0}^{n_0} |\zeta_k(y_{n_0-i})| \leq
\sum\limits_{i=0}^{n_0}  \frac 1 {9^{i+1}} <
\sum\limits_{i=0}^{\infty}  \frac 1 {9^{i+1}} =
\frac 1 9 \cdot \frac 9 8 = \frac 1 8.
\end{align}
So, 
\begin{align}
\zeta_k(x)  = \sum\limits_{n=0}^{n_0} \varepsilon_n \zeta_k(y_n)
\in \mathbb{T}_2.
\end{align}
Hence, $\zeta_k(x) \in \mathbb{T}_+$ and  $2\zeta_k(x) \in \mathbb{T}_+$, 
as desired.
\end{proof}

\begin{lemma} \label{lemma:J3:shift}
If $g_n > 1$ for all $n \in \mathbb{N}$ and
$0\leq k<l$ in $\mathbb{N}$, then $a_l+1 \in J_m$ for $m=3^{a_l-a_k}\pm 
2$,  that is, $m \zeta_{a_l+1}\in L_{\underline a,3}^\triangleright$.
\end{lemma}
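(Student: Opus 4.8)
The plan is to mirror the proof of Lemma~\ref{lemma:T3:shift}, keeping in mind that passing from $\mathbb{T}$ to $\mathbb{J}_3$ reverses the sign of the relevant exponent: whereas $\eta_{a_k-1}(x_n)=3^{a_k-a_n-2}$ decreases as $n$ grows, the computation in the proof of Lemma~\ref{lemma:J3:J12} gives $\zeta_{a_l+1}(y_n)=3^{a_n-a_l-2}$, which \emph{increases} as $n$ grows. Accordingly, I would set $\chi=m\zeta_{a_l+1}$ with $m=3^{a_l-a_k}\pm 2$, so that
\begin{align}
\chi(y_n) = m\cdot 3^{a_n-a_l-2} = 3^{a_n-a_k-2} \pm 2\cdot 3^{a_n-a_l-2}.
\end{align}
Since $\mathbb{T}_+$ and $L_{\underline a,3}$ are symmetric and $\chi(0)=0$, it suffices to verify that $\chi(y_n)\in\mathbb{T}_+$ for every $n\in\mathbb{N}$; this is exactly the assertion that $m\zeta_{a_l+1}\in L_{\underline a,3}^\triangleright$, i.e. $a_l+1\in J_m$. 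Throughout I would use that $g_n>1$ for all $n$ yields the gap estimate $a_j-a_i\geq 2(j-i)$ whenever $i<j$.

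I would then split into three cases according to the position of $n$ relative to $k$ and $l$, which are the mirror images of the three cases in Lemma~\ref{lemma:T3:shift}. First, if $n>l$, then $a_n-a_l-2\geq 0$ (since $a_n\geq a_{l+1}\geq a_l+2$), so $\chi(y_n)=m\cdot 3^{a_n-a_l-2}$ is an integer, whence $\chi(y_n)\equiv_1 0\in\mathbb{T}_+$. Second, if $k<n\leq l$, then $a_n-a_k-2\geq 0$ makes the first summand $3^{a_n-a_k-2}$ an integer ($\equiv_1 0$), leaving $\chi(y_n)\equiv_1 \pm 2/3^{a_l-a_n+2}$; since $a_l-a_n+2\geq 2$, this lies in $[-\tfrac 2 9,\tfrac 2 9]\subseteq\mathbb{T}_+$.

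Finally, if $n\leq k$, then both exponents are strictly negative: the bound $a_k-a_n+2\geq 2$ controls the first summand by $\tfrac 1 9$, while $a_l-a_n+2\geq (a_l-a_k)+2\geq 4$ controls the second by $\tfrac{2}{81}$, so $|\chi(y_n)|\leq \tfrac 1 9+\tfrac{2}{81}=\tfrac{11}{81}<\tfrac 1 4$ and again $\chi(y_n)\in\mathbb{T}_+$. Assembling the three cases shows $\chi=m\zeta_{a_l+1}\in L_{\underline a,3}^\triangleright$, as required.

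The only delicate point is the bookkeeping of which exponents are non-negative — and therefore contribute integers that vanish in $\mathbb{T}$ — versus strictly negative, where one must bound a genuine fraction; there is no substantive obstacle beyond this. It is worth noting that, unlike Lemma~\ref{lemma:T3:shift}, no hypothesis on $a_0$ is needed here: the shift character $\zeta_{a_l+1}$ is defined for every admissible $l$ because $a_l+1\geq 1>0$ automatically, which is precisely why the $\mathbb{J}_3$ statement can drop the requirement $a_0>0$ that the $\mathbb{T}$ version needed.
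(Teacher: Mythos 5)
Your proof is correct and follows essentially the same route as the paper's: the same character $\chi=m\zeta_{a_l+1}$, the same three-case split according to whether $n>l$, $k<n\leq l$, or $n\leq k$, and the same estimates ($\pm 2/3^{a_l-a_n+2}$ with $a_l-a_n+2\geq 2$ in the middle case, and the bound $1/9+2/81=11/81$ in the last case). The only cosmetic differences are that you expand $m\cdot 3^{a_n-a_l-2}$ into the two-term sum at the outset and bound by absolute values where the paper exhibits the interval $[0,\tfrac{11}{81}]$; your closing remark about why no hypothesis on $a_0$ is needed matches the paper's own Remark following Lemma~\ref{lemma:T3:shift}.
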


\begin{proof}
Let $\chi= m \zeta_{a_l+1}$ and $n\in \mathbb{N}$. 
If $l<n$, then $a_l+2\leq a_n$ (as $g_l >1$), and so 
\begin{align}
\chi(y_n) = m \zeta_{a_l+1}(y_n) = m \cdot 3^{a_n-a_l -2} \equiv_1 0.
\end{align}
If $k<n \leq l$, then $a_k+2 \leq a_n$ (since $g_k >1$), so 
$a_l-a_n+2 \leq a_l-a_k$,
and thus
\begin{align}
\chi(y_n) = m \zeta_{a_l+1}(y_n) = m \cdot 3^{a_n-a_l -2} = 
\frac m {3^{a_l -a_n+2}} = 
\frac{3^{a_l-a_k}\pm 2}{3^{a_l -a_n+2}} \equiv_1
\pm \frac{2}{3^{a_l -a_n+2}} \in \mathbb{T}_+,
\end{align}
because $a_l -a_n+2 \geq 2$. Finally, if $0 \leq n \leq k$, then
\begin{align}
\chi(y_n) = \frac{3^{a_l-a_k}\pm 2}{3^{a_l -a_n+2}} = 
\frac{1}{3^{a_k-a_n +2}} \pm \frac{2}{3^{a_l -a_n+2}}.
\end{align}
Since $n <l$ and $g_n >1$, one has $a_l - a_n \geq 2$, and so
$\frac 2 {3^{a_l -a_n+2}} \in [0,\frac 2 {81}]$. Furthermore,
$k \geq n$ implies that $a_k \geq a_n$, and so 
$\frac 1 {3^{a_k -a_n+2}} \in [0,\frac 1 {9}]$. Therefore,
$\chi(y_n) \in [0,\frac{11}{81}]\subseteq \mathbb{T}_+$.
This shows that 
$\chi =m\zeta_{a_l+1} \in L_{\underline a,3}^\triangleright$.
\end{proof}

\begin{proof}[Proof of Theorem~\ref{thm:J3:main}.]
Suppose that there is $n_0 \in \mathbb{N}$ such that
$g_{n_0}=1$. Then $y_{n_0+1}=3y_{n_0}$, and thus by 
Corollary~\ref{cor:nec:J-2x},
$2y_{n_0} \in Q_{\mathbb{J}_3}(\{y_{n_0},3y_{n_0}\}) \subseteq
Q_{\mathbb{J}_3}(L_{\underline{a},3})$. Since $2y_{n_0}\not\in
L_{\underline{a},3}$, this shows that $L_{\underline{a},3}$ is not 
quasi-convex.

Conversely, 
suppose that \mbox{$g_n > 1$} for every \mbox{$n\in \mathbb{N}$}, and let 
\mbox{$x \in Q_{\mathbb{J}_3}(L_{\underline{a},3})\backslash\{0\}$. }
We show that \mbox{$x \in L_{\underline{a},3}$.}
By Theorem~\ref{thm:J3:Q12}, $x$ can be expressed in the form
$x=\sum\limits_{n=0}^\infty \varepsilon_n y_n$, where
$\varepsilon_n\in\{-1,0,1\}$, because
\mbox{$Q_{\mathbb{J}_3}(L_{\underline{a},3}) \subseteq 
Q_1 \hspace{-2pt}\cap\hspace{-1pt} Q_2$}.
By discarding all summands with 
\mbox{$\varepsilon_n=0$}, we obtain that
\mbox{$x\hspace{-1.1pt}= \hspace{-1.9pt}
\sum\limits_{i=0}^N \varepsilon_{n_i} y_{n_i}$}, where 
$N \in \mathbb{N}  \cup\{\infty\}$, and
$\varepsilon_{n_i} \in \{-1,1\}$ and $n_i < n_{i+1}$ for every $i$.
Assume that $N >0$. By replacing $x$ with $-x$ if necessary, we may assume 
that $\varepsilon_{n_0}=1$. We put $\rho=\varepsilon_{n_1}$. Since
$n_2 \leq n_i$ for every $i \geq 2$, $y_{n_2}$ divides $y_{n_i}$ for every 
$i \geq 2$. Thus, $x=y_{n_0}+\rho y_{n_1} + y_{n_2}z$, where
$z \in \mathbb{J}_3$. Put $k=n_0$, $l=n_1$, and
$\chi=(\rho 3^{a_l-a_k}+2) \zeta_{a_l+1}$. By Lemma~\ref{lemma:J3:shift}, 
$\chi \in L_{\underline a,3}^\triangleright$, because
$\chi = \rho (3^{a_l-a_k}+2\rho) \zeta_{a_l+1}$ and $\rho=\pm 1$.
One has $\zeta_{a_l+1}(y_{n_2}) = 3^{a_{n_2}-a_l -2} \equiv_1 0$, because
$n_2 > l$ and $g_l > 1$, and consequently $a_{n_2}-a_l \geq 2$. So,
$\zeta_{a_l+1}(y_{n_2}z) =0$ in $\mathbb{T}$, because 
$\ker \zeta_{a_l+1} = 3^{a_l+2}\mathbb{J}_3$ is an ideal in the 
topological ring $\mathbb{J}_3$. Therefore,
$\chi(y_{n_2}z)=0$, and hence
\begin{align}
\chi(x) = \chi(y_{n_0}) + \rho  \chi(y_{n_1}).
\end{align}
Since $k <l$ and $g_k >1$, one has that $a_l-a_k \geq 2$. Thus,
one obtains that
\begin{align}
\chi(y_{n_0}) &= (\rho 3^{a_l-a_k} +2) 3^{a_k -a _l -2} = 
\dfrac{\rho}{9} + \frac{2}{3^{a_l -a_k +2}}, \quad\mbox{and} \\
\chi(y_{n_1}) &= (\rho 3^{a_l-a_k} +2) 3^{-2} = 
\rho 3^{a_l-a_k-2} + 2 \cdot 3^{-2} \equiv_1 \frac{2}{9}.
\end{align}
Therefore,
\begin{align}
\chi(x) = \chi(y_{n_0}) + \rho  \chi(y_{n_1}) = 
\frac \rho 3 +  \frac{2}{3^{a_l -a_k +2}}.
\end{align}
Since $a_l-a_k \geq 2$, one has $a_l-a_k +2\geq 4$, and consequently
$ \frac{2}{3^{a_l -a_k +2}} \leq \frac{2}{81}$. So,
$\chi(x) \not\in\mathbb{T}_+$, contrary to the assumption that
$x \in Q_{\mathbb{J}_3}(L_{\underline a,3})$. Hence, $N=0$, and
$x=y_{n_0} \in L_{\underline a,3}$, as desired.
\end{proof}

\section*{Acknowledgments}

We are grateful to Karen Kipper for her kind help in proof-reading this
paper for grammar and punctuation.

\nocite{Aussenhofer}   
\nocite{Banasz}        
\nocite{BeiLeoDikSte}  
\nocite{BrugMar2}      
\nocite{BruPhD}        
\nocite{deLeoPhD}      
\nocite{DikLeo}        
\nocite{Hern2}         
\nocite{Pontr}         
\nocite{Vilenkin}      

{\footnotesize

\bibliography{notes,notes2,notes3}
}

\begin{samepage}

\bigskip
\noindent
\begin{tabular}{l @{\hspace{1.8cm}} l}
Department of Mathematics and Computer Science & Department of Mathematics\\
University of Udine & University of Manitoba\\
Via delle Scienze, 208 -- Loc. Rizzi, 33100 Udine
 & Winnipeg, Manitoba, R3T 2N2 \\
Italy & Canada \\ & \\
\em e-mail: dikranja@dimi.uniud.it  &
\em e-mail: lukacs@cc.umanitoba.ca
\end{tabular}

\end{samepage}

\end{document}